\apptocmd{\sloppy}{\hbadness 10000\relax}{}{} 
\def\re{\operatorname{Re}}
\def\spin{\operatorname{spin}}
\def\mo{\operatorname{mod}}
\def\det{\operatorname{det}}
\newcommand{\A}{{\mathbb A}}
\newcommand{\Q}{{\mathbb Q}}
\newcommand{\Z}{{\mathbb Z}}
\newcommand{\R}{{\mathbb R}}
\newcommand{\C}{{\mathbb C}}
\newcommand{\GL}{{\rm GL}}
\newcommand{\SL}{{\rm SL}}
\newcommand{\SO}{{\rm SO}}
\newcommand{\Sp}{{\rm Sp}}
\newcommand{\GSp}{{\rm GSp}}
\newcommand{\PGSp}{{\rm PGSp}}
\newcommand{\Gal}{{\rm Gal}}
\newcommand{\trace}{{\rm tr}}
\newcommand{\sym}{{\rm sym}}
\newcommand{\Frob}{{\rm Frob}}
\def\re{\operatorname{Re}}
\newcommand{\ds}{\displaystyle}
\newcommand{\set}[1]{\left\{#1\right\}}
\newcommand{\abs}[1]{\left|#1\right|}
\newcommand{\forget}[1]{}
\def\qdots{\mathinner{\mkern1mu\raise0pt\vbox{\kern7pt\hbox{.}}\mkern2mu
\raise3.4pt\hbox{.}\mkern2mu\raise7pt\hbox{.}\mkern1mu}}
\newtheorem{lemma}{Lemma}[section]
\newtheorem{theorem}[lemma]{Theorem}
\newtheorem{proposition}[lemma]{Proposition}
\newtheorem{definition}[lemma]{Definition}
\newtheorem{remark}{Remark}
\newtheorem{conjecture}[lemma]{Conjecture}
\newtheorem{hypothesis}[lemma]{Hypothesis}
\newcommand\blfootnote[1]{%
	\begingroup
	\renewcommand\thefootnote{}\footnote{#1}%
	\addtocounter{footnote}{-1}%
	\endgroup
}
\newcommand\appendix@section[1]{%
	\refstepcounter{section}%
	\orig@section*{Appendix \@Alph\c@section: #1}%
	\addcontentsline{toc}{section}{Appendix \@Alph\c@section: #1}%
}
\g@addto@macro\appendix{\let\section\appendix@section}
\let\orig@section\section
\def\thickhline{%
  \noalign{\ifnum0=`}\fi\hrule \@height \thickarrayrulewidth \futurelet
   \reserved@a\@xthickhline}
\def\@xthickhline{\ifx\reserved@a\thickhline
               \vskip\doublerulesep
               \vskip-\thickarrayrulewidth
             \fi
      \ifnum0=`{\fi}}
\title{Some remarks on strong multiplicity one for paramodular forms}
\author{Xiyuan Wang, Zhining Wei, Pan Yan and Shaoyun Yi}
\date{}
\begin{document}

\maketitle
\blfootnote{2020 Mathematics Subject Classification: Primary 11F46, 11F60, 11F66; Secondary 11F30, 11F70, 11F80.
\\ \hspace*{0.22in} Key words and phrases. Siegel modular forms, paramodular forms, $L$-functions, strong multiplicity one}
\begin{abstract}
We establish several refined strong multiplicity one results for paramodular cusp forms by using automorphic and Galois-theoretic methods. We also give an application to distinguishing eigenforms by the twisted central values of the spinor $L$-functions, which is based on a result in Radziwi{\l}{\l} and Yang \cite{RadziwillYang2023}.
\end{abstract}

\tableofcontents

\section{Introduction}\label{section intro}
Let $\mathbb{A}$ be the ring of adeles of a number field, and let $\pi\simeq\otimes_v^\prime\pi_v$ and $\pi'\simeq\otimes_v^\prime\pi_v'$ be two cuspidal automorphic representations of $\GL_m(\A)$ and $\GL_{m'}(\A)$, respectively. The strong multiplicity one theorem asserts that if $\pi_v\simeq\pi_v'$ for all but finitely many places $v,$ then $\pi\simeq\pi'$. This is first proved by Piatetski-Shapiro \cite{Piatetski-Shapiro1979} using the uniqueness of the Kirillov model and then by Jacquet and Shalika \cite{JacquetShalika198101} using Rankin-Selberg $L$-functions. Later some quantitative strong multiplicity one results are obtained; see for example \cite{Moreno1985, GoldfeldHoffstein1993, Brumley2006, LiuWang2009}.
On the other hand, by Hecke theory the isomorphism of local representations is equivalent to the equality of Hecke eigenvalues. Then a natural question is, \textit{whether we can distinguish two automorphic forms by a set of eigenvalues}? This can be viewed as a refinement of the strong multiplicity one theorem. For the case of classical elliptic modular forms, the problem of distinguishing two Hecke eigenforms by Hecke eigenvalues has been well studied; see for example \cite{Sturm1987, RamMurty1997, Gh2011, VilardiXue2018}.

In this paper, we work exclusively over the number field $\Q$, and consider the strong multiplicity one problem for the symplectic similitude group $\GSp_4$. This kind of strong multiplicity one result has been established for generic cuspidal automorphic representations of $\GSp_4$ (see \cite{Soudry1987}) as well as certain paramodular forms (see \cite{RosnerWeissauer2017}). However, it turns out that for holomorphic Siegel modular forms of degree $2$, the non-generic nature of the underlying archimedean representation prevents the direct application of standard techniques in automorphic forms. Using Arthur's work on a classification of the discrete automorphic spectrum in terms of automorphic representations of general linear groups, Schmidt \cite{Schmidt2018, Schmidt2020} first proves a strong multiplicity one result for holomorphic paramodular cusp forms of general level. This result has been improved by Kumar, Meher and Shankhadhar \cite{KumarMeherShankhadhar2021} in the full level case. More precisely, they essentially show that any set of eigenvalues (normalized or non-normalized) at primes $p$ of positive upper density are sufficient to determine the Siegel cuspidal eigenform of full level; see \cite[Theorems~1.5, 1.6]{KumarMeherShankhadhar2021}. Recently, the second and fourth authors \cite{wei2022distinguishing} of this article further investigate the question of distinguishing Siegel cusp forms of degree $2$ from various aspects with several improved results, particularly including the approaches from $L$-functions. 

In this work, we continue to apply the theory of $L$-functions to establish more general results towards holomorphic paramodular forms. More precisely, we consider holomorphic paramodular forms of general level by means of two different kinds of $L$-functions, i.e., the spinor $L$-functions of degree $4$ (see \eqref{finite part of spinor L function}) and standard $L$-functions of degree $5$ (see \eqref{finite part of standard L function}). To state our main results properly, we first briefly recall some necessary definitions. 

For a positive integer $N$, the paramodular group of level $N$ is defined as
\begin{equation}\label{paramodular of level N}
    K(N)\coloneqq\Sp_4(\Q)\cap
    \begin{bsmallmatrix}
    \Z&N\Z&\Z&\Z\\
    \Z&\Z&\Z&N^{-1}\Z\\
    \Z&N\Z&\Z&\Z\\
    N\Z&N\Z&N\Z&\Z
    \end{bsmallmatrix}.
\end{equation}
Siegel modular forms of degree $2$ with respect to the paramodular group $K(N)$ are called paramodular forms of level $N$, and they have received much attention in recent years because of their appearance in the paramodular conjecture; see for example \cite{BrumerKramer2014, BrumerKramer2019, PoorYuen2015, BoxerCalegariGeePilloni2021}. On the other hand, there is also a satisfactory local theory of paramodular fixed vectors, developed in \cite{RobertsSchmidt2007}, with properties similar to the familiar local newform theory for $\GL_2$ as in \cite{Casselman1973}. In fact, this kind of local theory results in a global theory of paramodular oldforms and newforms; see \cite{RobertsSchmidt2006}. Let $\mathcal{S}_{k, j}(K(N))$ be the spaces of paramodular cusp forms of weight $(k, j)\in\Z_{>0}\times\Z_{\geq 0}$, and let $\mathcal{S}_{k, j}^{\mathrm{new}}(K(N))$ be the subspace of newforms. It is well known that we can associate with a Hecke eigenform $F\in  \mathcal{S}_{k, j}^{\mathrm{new}}(K(N))$ a cuspidal automorphic representation $\pi_F\simeq\otimes_{v}^\prime \pi_{F,v}$ of $\GSp_4(\A)$ with trivial central character; see for example \cite[\S~2.1]{RoySchmidtYi2021}. 

Our first main result in the following shows that the eigenvalues of Hecke operators $T(p)$ for almost all primes $p$ are sufficient to distinguish paramodular newforms.

\begin{theorem}[\text{Theorem~\ref{main thm 1}}]\label{First Main Theorem}
Let $(k_i, j_i)\in\Z_{>0}\times\Z_{\geq 0}$ and $N_i\in\Z_{>0}$, and let $F_i\in \mathcal{S}_{k_i, j_i}^{\mathrm{new}}(K(N_i))$ be two Hecke eigenforms for $i=1, 2$. For $\re(s)>3/2$, let 
\begin{equation*}
L(s,\pi_{F_i},\rho_4)=\sum_{n\geq 1}\frac{a_{F_i}(n)}{n^s},\quad i=1, 2,
\end{equation*}
be the associated spinor $L$-functions. If $a_{F_1}(p)=a_{F_2}(p)$ for almost all primes $p$, then $F_1=c\cdot F_2$ for some nonzero constant $c$.
\end{theorem}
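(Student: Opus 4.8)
The plan is to reduce the problem to strong multiplicity one for $\GL_4$ by transferring the paramodular eigenforms $F_1, F_2$ to cuspidal automorphic representations of $\GL_4(\A)$ via the known functoriality lifts (Arthur's classification for $\GSp_4$, together with the transfer to $\GL_4$), and then to exploit the fact that equality of the coefficients $a_{F_1}(p) = a_{F_2}(p)$ at almost all $p$ forces equality of the spinor $L$-functions, hence of almost all local factors. First I would recall that the spinor $L$-function $L(s, \pi_{F_i}, \rho_4)$ has an Euler product $\prod_p L_p(s, \pi_{F_i}, \rho_4)$, and that for $p \nmid N_i$ the local factor is determined by the Satake parameters, with $a_{F_i}(p)$ being (up to normalization) the trace of Frobenius, i.e.\ the coefficient of $p^{-s}$ in the local factor; thus the hypothesis says the two Dirichlet series agree at the prime-indexed terms. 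The first key step is then to upgrade "$a_{F_1}(p) = a_{F_2}(p)$ for almost all $p$" to "$L_p(s, \pi_{F_1}, \rho_4) = L_p(s, \pi_{F_2}, \rho_4)$ for almost all $p$": for a degree-$4$ Euler factor the single coefficient $a_{F_i}(p)$ does not a priori pin down all four Satake parameters, so one needs more. Here I would invoke either (a) the classification of Arthur type for $\pi_{F_i}$ — which constrains the shape of the local parameters so that the first coefficient, together with the known central character and the self-duality forced by the symplectic structure, determines the rest — or (b) a Rankin--Selberg / standard $L$-function input (the degree-$5$ standard $L$-function mentioned in the introduction) to supply the missing symmetric-square information.

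The second step is to assemble the local equalities into a global statement. Once $L_p(s, \pi_{F_1}, \rho_4) = L_p(s, \pi_{F_2}, \rho_4)$ for all but finitely many $p$, the partial $L$-functions $L^S(s, \pi_{F_1}, \rho_4)$ and $L^S(s, \pi_{F_2}, \rho_4)$ coincide on a right half-plane. Pulling back through the $\GSp_4 \to \GL_4$ transfer, this means the two cuspidal (or isobaric) automorphic representations $\Pi_i$ of $\GL_4(\A)$ attached to $\pi_{F_i}$ have $\Pi_{1,v} \cong \Pi_{2,v}$ for almost all $v$. Now strong multiplicity one for $\GL_4$ (Jacquet--Shalika) gives $\Pi_1 \cong \Pi_2$. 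The final step is descent: from $\Pi_1 \cong \Pi_2$ I would deduce $\pi_{F_1} \cong \pi_{F_2}$ as representations of $\GSp_4(\A)$ — using that the fibers of the transfer are controlled (the packet structure from Arthur, plus matching of archimedean data since both $F_i$ are holomorphic of their respective weights, which forces $(k_1,j_1) = (k_2,j_2)$ and $N_1 = N_2$) — and then conclude that $F_1$ and $F_2$ generate the same one-dimensional space of paramodular newforms inside $\mathcal{S}_{k_1,j_1}^{\mathrm{new}}(K(N_1))$ by the newform theory of Roberts--Schmidt, so $F_1 = c\cdot F_2$.

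The main obstacle, I expect, is the first step: passing from the single Hecke eigenvalue $a_{F_i}(p)$ to the full local Euler factor at $p$. Because $\rho_4$ has degree $4$, one coefficient is genuinely insufficient in general, and the non-generic archimedean component of a holomorphic Siegel form blocks the naive Rankin--Selberg arguments that work in the generic case. The resolution has to come from the structure theory — either Arthur's classification pinning down which $A$-parameters can occur (Yoshida type, Saito--Kurokawa type, general type) and then arguing case by case that within each type the leading coefficient plus the symplectic self-duality determines the local factor, or bringing in the degree-$5$ standard $L$-function to recover the remaining symmetric data. A secondary technical point is handling the finitely many ramified primes and the archimedean place in the descent step, ensuring that agreement away from a finite set really does force $(k_1, j_1, N_1) = (k_2, j_2, N_2)$; this should follow from the compatibility of the transfer with local Langlands at the bad places together with the constraints imposed by paramodularity.
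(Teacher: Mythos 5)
Your reduction stalls at exactly the step you yourself flag as the main obstacle, and neither of your proposed fixes works. A degree-$4$ symplectic Satake parameter is not determined by its trace: even with trivial similitude and self-duality, the multiset $\{\alpha,\alpha^{-1},\beta,\beta^{-1}\}$ is not recovered from $\alpha+\alpha^{-1}+\beta+\beta^{-1}$, and this failure already occurs inside the type {\bf (G)} class, so ``arguing case by case within each Arthur type'' cannot upgrade $a_{F_1}(p)=a_{F_2}(p)$ to equality of the local spinor factors --- this is precisely why Schmidt's earlier strong multiplicity one result required the eigenvalues of both $T(p)$ and $T(p^2)$. Your alternative (b), importing the degree-$5$ standard $L$-function, is not available either: the hypothesis of the theorem gives no information about the coefficients $b_F(p)$ (that is the content of the paper's separate Theorem~\ref{main thm 2}, proved under extra assumptions). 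So you never legitimately reach the stage where Jacquet--Shalika strong multiplicity one for $\GL_4$ can be applied, and your opening claim that equality of the $a_{F_i}(p)$ ``forces equality of the spinor $L$-functions, hence of almost all local factors'' is exactly what needs proof and is false as a local statement.

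The paper's route avoids recovering local factors altogether: it proves a refined strong multiplicity one theorem for $\GL_m$, $m\le 4$ --- for self-dual unitary cuspidal $\pi,\pi'$, agreement of the Dirichlet coefficients $\lambda_\pi(p)=\lambda_{\pi'}(p)$ at almost all primes already forces $\pi=\pi'$ --- via a Rankin--Selberg/Selberg-orthogonality argument: the sums $\sum_p \lambda_\pi(p)\lambda_{\pi'}(p)p^{-s}$ detect the pole of $L^S(s,\pi\otimes\pi')$ at $s=1$, with the contribution of higher prime powers controlled by Rudnick--Sarnak's Hypothesis~H, known for $m\le 4$ by Kim's work. In the type {\bf (G)} case this gives $\Pi_{F_1}\cong\Pi_{F_2}$, hence equality of the spinor $L$-functions, and then Schmidt's theorem yields $F_1=c\cdot F_2$. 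You also gloss over the necessary case analysis: when both forms are of Saito--Kurokawa type {\bf (P)} the $\GL_4$ transfer is an isobaric sum rather than cuspidal, and the paper argues instead through $a_{F_i}(p)=p^{1/2}+p^{-1/2}+\lambda_{f_i}(p)$ and strong multiplicity one for $\GL_2$; the mixed {\bf (P)}/{\bf (G)} case is excluded not by any transfer argument but by comparing coefficient sizes using the Luo--Rudnick--Sarnak bounds.
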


\begin{remark}\label{rk 1 after main thm 1}
Theorem~\ref{First Main Theorem} is
a refinement of \cite{Schmidt2018, Schmidt2020}, in which Schmidt shows that paramodular newforms enjoy a strong multiplicity one property in terms of the spinor $L$-functions with respect to the eigenvalues of two Hecke operators $T(p)$ and $T(p^2)$ for almost all primes $p$. On the other hand, Theorem~\ref{First Main Theorem} also generalizes the relevant results of \cite{FarmerPitaleRyanSchmidt2013arxiv}, in which they only consider Siegel Hecke eigenforms of full level.
\end{remark}

Our second main theorem gives an analogous result with respect to the standard $L$-functions under a certain mild condition. To state this result precisely, we further need the following definition.
We say that a Hecke eigenform $F\in \mathcal{S}_{k, j}(K(N))$ is \textit{good} if local components $\pi_{F,2}$ and $\pi_{F,3}$ of the associated cuspidal automorphic representation $\pi_F$ of $\GSp_4(\A)$ are not supercuspidals and there does not exist a non-trivial character $\chi$ such that $\pi_F\simeq\pi_F\otimes\chi$; see Definition~\ref{definition of good}.

\begin{theorem}[\text{Theorem~\ref{main thm 2}}]\label{Second Main Theorem}
Let $(k_i, j_i)\in\Z_{\geq 3}\times\Z_{\geq 0}$ and $N_i\in\Z_{>0}$ for $i=1, 2$. Let $F_i\in \mathcal{S}_{k_i, j_i}^{\mathrm{new}}(K(N_i)), i=1, 2$, be two Hecke eigenforms, which are \textit{good} as defined above.
For $\re(s)>3/2$, let 
\begin{equation*}
L(s,\pi_{F_i},\rho_5)=\sum_{n\geq 1}\frac{b_{F_i}(n)}{n^s},\quad i=1, 2,
\end{equation*}
be the associated standard $L$-functions. If $b_{F_1}(p)=b_{F_2}(p)$ for almost all primes $p$, then there exists a quadratic character $\chi$ such that $\pi_{F_1}\simeq\pi_{F_2}\otimes\chi$. Additionally, if we assume that $N_1$ and $N_2$ are squarefree numbers, then $F_1=c\cdot F_2$ for some nonzero constant $c$.
\end{theorem}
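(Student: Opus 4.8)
The plan is to first transfer the two standard $L$-functions to $\GL_5(\A)$, force an isomorphism there by a Rankin--Selberg argument, and then descend that identity back to $\GSp_4$; the hypotheses $k_i\ge 3$ (so that Galois representations are available) and ``goodness'' are exactly what make the descent go through. \emph{Step 1.} For each $i$ let $\Pi_i$ be the functorial transfer of $\pi_{F_i}$ to $\GL_5(\A)$ attached to $\PGSp_4(\C)=\SO_5(\C)\hookrightarrow\GL_5(\C)$, which exists by Arthur's classification; then $L(s,\pi_{F_i},\rho_5)=L(s,\Pi_i)$, and $\Pi_i$ is an isobaric automorphic representation whose cuspidal constituents are all self-dual of orthogonal type, so the Dirichlet coefficients of $L(s,\Pi_i)$ are real and the hypothesis says the $p$-th coefficients of $L(s,\Pi_1)$ and $L(s,\Pi_2)$ agree for almost all $p$. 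Comparing $\log L(s,\Pi_1\times\Pi_2)$ with $\log L(s,\Pi_i\times\Pi_i)$ as $s\to 1^+$, the difference stays bounded (the $p$-terms agree off a finite set, and the prime powers $p^k$ with $k\ge 2$ contribute absolutely convergently for $\re(s)>1/2$), whereas $L(s,\Pi_i\times\Pi_i)$ has a pole at $s=1$ of order $\langle\Pi_i,\Pi_i\rangle$. Hence $L(s,\Pi_1\times\Pi_2)$ has a pole at $s=1$ of order $\langle\Pi_1,\Pi_1\rangle=\langle\Pi_1,\Pi_2\rangle$, and symmetrically $\langle\Pi_1,\Pi_2\rangle=\langle\Pi_2,\Pi_2\rangle$; Cauchy--Schwarz, $\langle\Pi_1,\Pi_2\rangle^2\le\langle\Pi_1,\Pi_1\rangle\langle\Pi_2,\Pi_2\rangle$, is then an equality, which forces $\Pi_1\cong\Pi_2$.

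\emph{Step 2 (the descent).} Since $k_i\ge 3$, attach to $\pi_{F_i}$ its $4$-dimensional $\ell$-adic Galois representation $\rho_{F_i}$, valued in $\GSp_4$. By Chebotarev, $\Pi_1\cong\Pi_2$ translates into an isomorphism of the associated $5$-dimensional orthogonal Galois representations, i.e.\ $\rho_{F_1}$ and $\rho_{F_2}$ have the same image in $\PGSp_4=\SO_5$ (one may equivalently use $\wedge^2\rho_4=\rho_5\oplus\lambda$ for $\GSp_4$). Since the kernel of $\GSp_4\to\SO_5$ is the center, and elements of $\SO_5$ conjugate in $\GL_5$ are already conjugate in $\SO_5$, this yields $\rho_{F_2}\cong\rho_{F_1}\otimes\chi$ for a Galois character $\chi$, with $\chi^2$ equal to the ratio of the similitude characters of $\rho_{F_2}$ and $\rho_{F_1}$; a comparison of Hodge--Tate weights forces the weights $(k_i,j_i)$ to be compatible, and then the trivial central characters force this ratio -- hence $\chi^2$ -- to be a (possibly trivial) quadratic character, so $\chi^4=\idm$. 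At this point the ``good'' hypothesis is decisive: that $\pi_{F_i}$ has no nontrivial quartic self-twist makes $\chi$ essentially unique, and that $\pi_{F_i,2}$ and $\pi_{F_i,3}$ are not supercuspidal keeps the local behaviour at $2$ and $3$ tame enough to control. Finally a further comparison of $L$-functions (or Arthur's multiplicity formula) upgrades $\rho_{F_1}\cong\rho_{F_2}\otimes\chi$ to $\pi_{F_1}\cong\pi_{F_2}\otimes\chi$, with $\chi$ a quartic Hecke character; this is the first assertion.

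\emph{Step 3 (squarefree level).} Now suppose $N_1,N_2$ are squarefree. For each $p\mid N_i$ the local newform theory of Roberts--Schmidt forces $\pi_{F_i,p}$ to be one of the conductor-$p$ representations of $\GSp_4(\Q_p)$, all of which are built from unramified data; since twisting an unramified -- or conductor-$p$ -- representation of $\GSp_4(\Q_p)$ by a ramified character raises its paramodular conductor exponent well beyond $1$, reading $\pi_{F_1}\cong\pi_{F_2}\otimes\chi$ prime by prime and comparing the (squarefree) conductors shows $\chi$ is unramified at every finite place. But by Minkowski the only Hecke character of $\Q$ that is unramified at all finite places is trivial, so $\chi=\idm$; hence $\pi_{F_1}\cong\pi_{F_2}$, which forces $(k_1,j_1)=(k_2,j_2)$ and $N_1=N_2$, and then $F_1$ and $F_2$ span the same line in $\mathcal{S}^{\mathrm{new}}_{k_1,j_1}(K(N_1))$ because the paramodular newvector of a cuspidal automorphic representation is unique up to scalar. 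Thus $F_1=c\cdot F_2$.

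\emph{Main obstacle.} The crux is Step 2: the degree-$5$ $L$-function detects $\pi_{F_i}$ only through its image in $\PGSp_4$, so isolating the precise twisting character -- and, above all, bounding its order and controlling its ramification -- is the hard part, and this is exactly where the Galois-theoretic input and the ``good'' hypothesis are indispensable. A secondary difficulty is uniformity across the non-general (Saito--Kurokawa, Yoshida, and other CAP) types of $\pi_{F_i}$, for which $\Pi_i$ is genuinely non-cuspidal: the pole count in Step 1 and the descent in Step 2 then require some additional type-by-type bookkeeping, although the conclusion is unaffected.
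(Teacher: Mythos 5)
Your overall architecture (transfer to degree $5$, force an identity there, descend via Galois representations, then kill the twisting character in the squarefree case) is close in spirit to the paper's, but two of your steps have genuine gaps. In Step 1 the Rankin--Selberg comparison on $\GL_5$ is not unconditional: the assertion that the prime powers $p^{\ell}$, $\ell\ge 2$, in $\log L(s,\Pi_1\times\Pi_2)$ contribute absolutely convergently ``for $\re(s)>1/2$'' is unjustified. For degree $5$ the best known bound toward Ramanujan is $|\alpha_{p,i}|\le p^{1/2-1/26}$ by \cite{LuoRudnickSarnak1999}, with which the $\ell=2$ terms are only controlled for $\re(s)>3/2-1/13$, not near $s=1$; controlling them is exactly the role of Hypothesis~\ref{Hypothesis implies Selberg orthogonality}, which is known only for $m\le 4$ (Theorem~\ref{Thm Hypothesis H holds for m leq 4}), while for $m=5$ Proposition~\ref{Strong MO for GL5 conditional} needs the unverified extra hypothesis that $L(s,\sym^2\Pi_{5,F_i})$ converges absolutely for $\re(s)>1$. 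This is precisely why the paper's proof of Theorem~\ref{main thm 2} never runs a $\GL_5$ comparison: it converts $b_{F_1}(p)=b_{F_2}(p)$ directly into $\trace\,\mathrm{proj}\,r_1(\Frob_p)=\trace\,\mathrm{proj}\,r_2(\Frob_p)$ and applies Chebotarev and Brauer--Nesbitt to the $5$-dimensional Galois representations, using automorphic strong multiplicity one only at the $\GL_4$ level (where Hypothesis~\ref{Hypothesis implies Selberg orthogonality} is known) and Lemma~\ref{smo for twist cases} to handle the non-generic archimedean component.

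In Step 2 your descent from an isomorphism of the $5$-dimensional orthogonal representations to $r_1\cong r_2\otimes\chi$ silently uses irreducibility: ``same projective image implies twist by a character'' and ``conjugate in $\GL_5$ implies conjugate in $\SO_5$'' are Schur-type statements which fail for reducible representations (cf.\ Ramakrishnan's appendix \cite{Ram2000}), and you never establish that irreducibility. In the paper it is supplied by Theorem~\ref{Thm Pi5 cuspidal unitary} (cuspidality of $\Pi_{5,F}$) combined with \cite{CG-irred}, and this is the true role of the \emph{good} hypothesis: non-supercuspidality at $p=2,3$ is needed to apply Kim's exterior-square transfer \cite{Kim2003}, and the absence of a nontrivial quartic self-twist rules out the non-cuspidal decompositions of $\Lambda^2\Pi_{4,F}$ --- not merely ``keeping the local behaviour tame''. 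Relatedly, the non-{\bf (G)} cases are not mere bookkeeping: the mixed case (one form of type {\bf (P)}, one of type {\bf (G)}) must be shown to be impossible, which the paper does by pitting the Sato--Tate lower bound $b_{F}(p)\gg p^{1/2}$ along a sequence of primes for the Saito--Kurokawa form against the upper bound $5p^{1/2-1/26}$ for the type {\bf (G)} form, the latter again requiring the cuspidality of $\Pi_{5,F}$; and in the {\bf (P)}--{\bf (P)} case your pole-order count does not apply as stated, since $\Pi_{5,F}=\mathbf{1}\boxplus \pi_f|\cdot|^{1/2}\boxplus \pi_f|\cdot|^{-1/2}$ has non-unitary isobaric constituents (the paper instead reduces this case to strong multiplicity one for $\GL_2$ via the explicit formula for $b_F(p)$). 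Your Step 3 is essentially the paper's argument and is fine.
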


\begin{remark}\label{rk 2 after main thm 2}
To the best of our knowledge, this kind of strong multiplicity one result in terms of the standard $L$-functions is new. We also note that the only reason requiring the conditions on the local representations at primes $p=2, 3$, in the definition of a Hecke eigenform $F$ being \textit{good} comes from the assumptions in \cite[Theorem~A]{Kim2003}. On the other hand, it would be interesting if one can improve the condition $k\geq 3$ to $k>0$. It is a well known fact that there is no holomorphic paramodular form of weight $1$. Thus, some additional work is needed for the $k=2$ case to achieve such improvement. In this work, our method can only deal with the regular case, meaning that the corresponding Galois representations having distinct Hodge-Tate weights $\{0, k-2, k+j-1, 2k+j-3\}$.
\end{remark}

As an application of Theorem~\ref{First Main Theorem}, our third main result on distinguishing Hecke eigenforms is obtained by using the central values of a family of the twisted spinor $L$-functions. More precisely, let $\chi$ be a primitive Dirichlet character, and for simplicity of notation we will also view $\chi$ as a Hecke character of $\Q^\times\backslash\A^\times$. Let $\sigma_1$ be the standard representation of the dual group $\GL_1(\C)=\C^\times$. Recall that for a Hecke eigenform $F\in  \mathcal{S}_{k, j}^{\mathrm{new}}(K(N))$, we let $L(s, \pi_F, \rho_4)$ be the associated spinor $L$-function as in Theorem~\ref{First Main Theorem}, i.e., $L(s, \pi_F, \rho_4)=\sum_{n\geq1}a_{F}(n)n^{-s}$ for $\re(s)>3/2$.
Then we can define the twisted spinor $L$-function by
\begin{equation}\label{finite part of twisted spinor L function}
L(s,\pi_{F}\times\chi,\rho_4\otimes\sigma_1)=\sum_{n\geq1}\frac{a_{F}(n)\chi(n)}{n^{s}},\quad \re(s)>3/2.
\end{equation}
Note that the twisted spinor $L$-function $L(s,\pi_{F}\times\chi,\rho_4\otimes\sigma_1)$ admits an analytic continuation to the whole complex plane and satisfies a certain functional equation. We denote by $\Gamma_2=K(1)$ the full modular group $\Sp_4(\Z)$.

\begin{theorem}[\text{Theorem~\ref{main thm 3}}]\label{Third Main Theorem}
Let $F_i\in\mathcal{S}_{k_i}(\Gamma_2), i=1, 2$, be two Hecke eigenforms. Suppose that for almost all primitive characters $\chi$ of squarefree conductor, we have
\begin{equation}\label{main assumption intro}
L(1/2,\pi_{F_1}\times\chi,\rho_4\otimes\sigma_1)=L(1/2,\pi_{F_2}\times\chi,\rho_4\otimes\sigma_1),
\end{equation}
then $k_1=k_2$ and $F_1=c\cdot F_2$ for some nonzero constant $c$.    
\end{theorem}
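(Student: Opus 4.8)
The plan is to recover the Dirichlet coefficients $a_{F_i}(p)$ of the spinor $L$-functions from the family of central twists, and then to invoke Theorem~\ref{First Main Theorem}. The first step is a reduction: it suffices to prove that $a_{F_1}(p)=a_{F_2}(p)$ for almost all primes $p$, since Theorem~\ref{First Main Theorem} then yields $F_1=c\,F_2$ for some nonzero $c$, and a nonzero Siegel cusp form cannot transform under $\Gamma_2$ with two distinct scalar weights, so $k_1=k_2$ follows as well.

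To isolate $a_{F_i}(\ell)$ for a fixed prime $\ell$, I would study, for a large parameter $Q$ and a fixed nonnegative smooth weight $w$ supported in $[1,2]$ with $\int w>0$, the twisted first moment
\[
\mathcal{M}_i(\ell;Q)\;=\;\sum_{\substack{q\ \mathrm{squarefree}\\(q,2\ell)=1}} w\!\left(\frac{q}{Q}\right)\frac{1}{\varphi^{*}(q)}\ \sideset{}{^{*}}\sum_{\chi\bmod q}\overline{\chi}(\ell)\,L\!\left(\tfrac12,\pi_{F_i}\times\chi,\rho_4\otimes\sigma_1\right),
\]
where $\sideset{}{^{*}}\sum$ runs over primitive characters and $\varphi^{*}(q)$ counts them. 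The hypothesis makes the inner summand agree for $i=1,2$ for all $\chi$ outside a sparse set $\mathcal{E}$, so $\mathcal{M}_1(\ell;Q)-\mathcal{M}_2(\ell;Q)$ is just the contribution of the $\chi\in\mathcal{E}$ of modulus $q\sim Q$. Next I would insert the approximate functional equation for each $L(1/2,\pi_{F_i}\times\chi,\rho_4\otimes\sigma_1)$ — legitimate since, as recalled after \eqref{finite part of twisted spinor L function}, this completed twisted $L$-function is entire of order one, the two degenerate factors in the Saito--Kurokawa case being the entire Dirichlet $L$-functions $L(s\mp\tfrac12,\chi)$ — and open the character sum by M\"obius inversion and orthogonality. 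The analytic conductor being $\asymp_{k_i}q^{4}$, the effective length is $\asymp q^{2}$, and the diagonal term $n=\ell$ of the leading part of the functional equation produces $c_{0}\,a_{F_i}(\ell)\,Q+O(Q^{1/2+\varepsilon})$ with $c_{0}=c_{0}(w,\ell)>0$ \emph{independent of $i$} (because $V(0)=1$ whatever the archimedean data); comparing the two sides would then give $a_{F_1}(\ell)=a_{F_2}(\ell)$ for every prime $\ell$, completing the argument via Theorem~\ref{First Main Theorem}.

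The main obstacle is bounding everything else in $\mathcal{M}_i$ by $o(Q)$. The off-diagonal terms $n\equiv\ell\pmod q$ with $n\ne\ell$ have length in $n$ comparable to the size of the character family, so no power saving is free; and the dual terms, after one expands the quartic Gauss sum $\tau(\chi)^{4}$, involve hyper-Kloosterman sums. Handling both requires genuine cancellation in sums of degree-four $L$-function coefficients twisted by additive characters, which I would extract from the Voronoi summation formula for $L(s,\pi_{F_i},\rho_4)$ — available through Arthur's transfer of $\pi_{F_i}$ to an automorphic representation of $\GL_4(\A)$, or through the explicit factorization in the Saito--Kurokawa case — combined with Deligne's square-root bound for hyper-Kloosterman sums, the extra averaging over the conductor $q$, and the Rankin--Selberg mean value $\sum_{n\le X}|a_{F_i}(n)|^{2}\ll_{\varepsilon}X^{1+\varepsilon}$ (or its $\GL_{2}\times\GL_{1}$ analogue for Saito--Kurokawa forms). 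Controlling the archimedean transforms uniformly in the weights $k_i$, and disposing cleanly of the degenerate Saito--Kurokawa contribution, are the two further technical points that need care.

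Finally, I would handle the exceptional characters $\chi\in\mathcal{E}$ by Cauchy--Schwarz against a second moment bound $\sum_{q\sim Q}\sideset{}{^{*}}\sum_{\chi\bmod q}|L(1/2,\pi_{F_i}\times\chi,\rho_4\otimes\sigma_1)|^{2}\ll_{\varepsilon}Q^{2+\varepsilon}$ (again via the $\GL_4$ picture, respectively the Saito--Kurokawa factorization), together with the sparsity of $\{\chi\in\mathcal{E}\ \text{of modulus}\ q\sim Q\}$, which renders their total contribution $o(Q)$. Alternatively, should a determination-by-central-twists theorem for automorphic $L$-functions of $\GL_4(\A)$ (robust to a negligible exceptional set of characters) be available in the literature, one could simply quote it to conclude that the $\GL_4$-transfers of $\pi_{F_1}$ and $\pi_{F_2}$ coincide, hence $a_{F_1}(n)=a_{F_2}(n)$ for all $n$, and again finish by Theorem~\ref{First Main Theorem}.
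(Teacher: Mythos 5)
Your overall strategy---extract $a_{F_i}(p)$ from a twisted first moment of the central values and then quote Theorem~\ref{First Main Theorem}---is the same as the paper's strategy in the type \textbf{(G)} case, but the key analytic step is not justified and, as formulated, is not within reach of the tools you cite. You average over \emph{all} squarefree moduli $q\sim Q$ and assert that after the approximate functional equation the diagonal yields $c_0\,a_{F_i}(\ell)\,Q+O(Q^{1/2+\varepsilon})$, with the off-diagonal and dual terms to be beaten by $\GL_4$ Voronoi summation, Deligne's bounds for hyper-Kloosterman sums, and averaging in $q$. For a degree-$4$ twist the effective length is $\asymp q^{2}$, i.e.\ the same size as the whole family of characters, so after orthogonality the off-diagonal terms $n\equiv\pm\ell\ (\mathrm{mod}\ q)$ and the dual sum weighted by $\tau(\chi)^4/q^2$ are trivially of the \emph{same order} as the main term; no known combination of Voronoi and square-root cancellation in complete exponential sums produces even an $o(1)$ relative saving for the full squarefree family, let alone a power saving. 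This is exactly the obstruction the paper circumvents by \emph{not} using the full family: it restricts to the Radziwi\l\l--Yang set $\mathcal{Q}_{\delta,\nu}$ of factorable moduli $q=p_1p_2m$ (Definition~\ref{defn of Q delta nu}), exploits the Siegel--Walfisz property of $\{a_F(n)\}$, and proves only a $\log$-power saving (Lemma~\ref{Sketch of RY}), which is already enough to compare $a_{F_1}(p)$ with $a_{F_2}(p)$. Your claimed asymptotic over all squarefree moduli, and likewise your fallback of quoting a ``determination-by-central-twists'' theorem for $\GL_4(\A)$, are open, so the central step of your argument is a genuine gap rather than a technicality.

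A second gap is the Saito--Kurokawa (type \textbf{(P)}) forms, which do occur in $\mathcal{S}_{k}(\Gamma_2)$. For such $F$ one has $L(1/2,\pi_{F}\times\chi,\rho_4\otimes\sigma_1)=L(0,\chi)L(1,\chi)L(1/2,\pi_{f}\times\chi)$, which vanishes identically for even $\chi$, and the $\varepsilon$-factor is parity-dependent, so the first-moment lemma (which takes $\epsilon(\pi_F,\chi)=c_{\pi_F}\tau(\chi)^4/q^2$, valid for the type \textbf{(G)} transfer to $\GL_4$) does not apply verbatim; your ``explicit factorization'' remark does not address this. The paper handles these cases separately: the mixed case is excluded by contradiction, comparing the identical vanishing at even $\chi$ on the \textbf{(P)} side with Radziwi\l\l--Yang's nonvanishing theorem applied to the $\GL_4$ transfer on the \textbf{(G)} side, and the case of two Saito--Kurokawa forms is reduced, via $L(0,\chi)L(1,\chi)\neq 0$ for odd $\chi$, to a Luo--Ramakrishnan-style $\GL_2$ first moment over odd characters of prime conductor. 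Your sketch would need both of these supplements (or a genuinely uniform moment lemma covering the \textbf{(P)} case) before the reduction to Theorem~\ref{First Main Theorem} is complete.
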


\begin{remark}\label{rk 3 after main thm 3}
First, we note that Theorem~\ref{Third Main Theorem} gives a generalization of \cite[Proposition~1.3]{wei2022distinguishing}, in which only Saito-Kurokawa liftings are considered. Even though, the assumption in \cite[Proposition~1.3]{wei2022distinguishing} only requires for almost all primitive quadratic characters. Secondly, we point out that the same result in Theorem~\ref{Third Main Theorem} should also hold for more general paramodular newforms $F_i\in \mathcal{S}_{k_i, j_i}^{\mathrm{new}}(K(N_i)), i=1, 2$, with $(k_i, j_i)\in \Z_{>0}\times\Z_{\geq 0}, N_i\in\Z_{>0}$ by applying similar arguments in Section~\ref{section-application}. Finally, if we further assume that $k_1, k_2\geq3$, then it suffices to assume that \eqref{main assumption intro} holds for almost all primitive characters of $3$-squarefree conductor. Here, $3$-squarefree means that the conductor can be written as the product of at most $3$ distinct primes. In fact, the condition that $k_1, k_2\geq 3$ are required only for general levels case since the weight of scalar-valued Siegel cusp forms of full level is at least $10$; see for example \cite[(3.6)]{RoySchmidtYi2021}. As an immediate consequence, this implies that the assumption in Theorem~\ref{Third Main Theorem} on the conductor can be refined to $3$-squarefree conductor.
\end{remark}

The paper is organized as follows: In Section~\ref{section-siegel-modular-forms}, we review the basics of Siegel modular forms of degree $2$, as well as their spinor and standard $L$-functions. In Section~\ref{section-spinor}, we first prove several results about the strong multiplicity one theorem of the general linear groups $\GL_m$ and then give an application to distinguishing paramodular cusp forms in terms of the spinor $L$-functions. In Section~\ref{section-standard}, we first show the cuspidality of automorphic representations associated to the standard $L$-functions and then we give a result of distinguishing paramodular cusp forms in terms of such $L$-functions from Galois representation theoretic approach. Finally, Section~\ref{section-application} is devoted to an application to distinguishing   Hecke eigenforms of degree $2$ by using the central values of a family of the twisted spinor $L$-functions.

\subsection*{Notation} 
All algebraic groups are defined over $\Q$ in this paper. We use the notation $A\ll_{x, y, z} B$ to indicate that there exists a positive constant $C$, depending on $x, y, z$, so that $|A|\leq C|B|$. The symbol $\epsilon$ will denote a small positive number. We write $A(x)=O_y(B(x))$ if there exists a positive real number $M$ (depending on $y$) and a real number $x_0$ such that $|A(x)|\leq M|B(x)|$ for all $x\geq x_0$. We write $a\sim A$ to mean that $a\in [A, 2A)$.

\section*{Acknowledgments}
The authors are deeply grateful to the referee for a very careful review and a number of insightful comments and suggestions, which improve this paper. We thank Wenzhi Luo, Ralf Schmidt, Ariel Weiss and Liyang Yang for their helpful discussions and comments. Pan Yan is supported by an AMS-Simons Travel Grant. Shaoyun Yi is supported by the National Natural Science Foundation of China (No. 12301016) and the Fundamental Research Funds for the Central Universities (No. 20720230025).

\section{Siegel modular forms}\label{section-siegel-modular-forms}
The algebraic group $\GSp_4$ is defined as
\begin{equation}
\GSp_4 \coloneqq \{g\in\GL_4\colon \:^tgJg=\mu(g)J,\:\mu(g)\in \GL_1\}, \quad J=\begin{bsmallmatrix} &&&1\\&&1&\\&-1&&\\-1&&&\end{bsmallmatrix}.
\end{equation}
The character $\mu\colon\GSp_4\to\GL_1$ is called the multiplier homomorphism. The kernel of $\mu$ is the symplectic group $\Sp_4$. Let $\mathrm{Z}$ be the center of $\GSp_4$ and $\PGSp_4=\GSp_4/\mathrm{Z}$. When speaking about Siegel modular forms of degree $2$, it is more convenient to realize symplectic groups using the symplectic form $J=\begin{bsmallmatrix} 0&1_2\\-1_2&0\end{bsmallmatrix}$. Let $\mathbb{H}_2$ be the Siegel upper half plane of degree $2$, consisting of all symmetric complex $2\times 2$ matrices $Z$ whose imaginary part is positive definite.
The group $\GSp_4(\mathbb{R})^+\coloneqq\{g\in\GSp_4(\R)\colon \mu(g)>0\}$ acts on $\mathbb{H}_2$ by the linear fractional transformation
\begin{equation}
    g\langle Z\rangle\coloneqq (AZ+B)(CZ+D)^{-1}\quad \text{for } g=\begin{bsmallmatrix} A&B\\C&D\end{bsmallmatrix}\in \GSp_4(\mathbb{R})^+ \text{ and } Z\in\mathbb{H}_2.
\end{equation}
Let $k\in\Z$ and $j\in\Z_{\geq 0}$. Let $U_j\simeq\mathrm{sym}^j(\C^2)$ be the space of all complex homogeneous polynomials of degree $j$ in the two variables $S$ and $T$. For $h\in\GL_2(\C)$ and $P(S, T)\in U_j$ we define
\begin{equation}
\eta_{k, j}(h)P(S, T)=\det(h)^kP\big((S, T)h\big).
\end{equation}
Then $(\eta_{k, j}, U_j)$ gives a concrete realization of the irreducible representation $\det^k\mathrm{sym}^j$ of $\GL_2(\C)$.

For a positive integer $N$, let $K(N)$ be the paramodular group of level $N$ defined as in \eqref{paramodular of level N}. Let $\mathcal{M}_{k, j}(K(N))$ be the space of paramodular form of level $N$ and weight $(k, j)$ (i.e., $(k+j, k)$ is the associated minimal $K$-type; see \cite[p.~2402]{Schmidt2017}) with respect to $K(N)$, and let $\mathcal{S}_{k, j}(K(N))$ be the subspace of cusp forms, which can be defined as usual. To give more details, a function $F\in \mathcal{M}_{k, j}(K(N))$ is a holomorphic $U_j$-valued function on $\mathbb{H}_2$ satisfying $\big(F|_{k, j}\gamma\big)(Z)=F(Z)$ for all $\gamma\in K(N)$. Here, the slash operator $|_{k, j}$ on $F$ is defined as
\begin{equation}
    \big(F|_{k, j} g\big)(Z)\coloneqq\mu(g)^{k+\frac{j}{2}}\eta_{k, j}(CZ+D)^{-1}F(g\langle Z\rangle)\quad \text{for } g=\begin{bsmallmatrix} A&B\\C&D\end{bsmallmatrix}\in \GSp_4(\mathbb{R})^+ \text{ and } Z\in\mathbb{H}_2.
\end{equation}
The normalization factor $\mu(g)^{k+\frac{j}{2}}$ ensures that the center of $\GSp_4(\mathbb{R})^+$ acts trivially. We abbreviate $\mathcal{M}_{k, 0}(K(N))$ as $\mathcal{M}_{k}(K(N))$ and $\mathcal{S}_{k, 0}(K(N))$ as $\mathcal{S}_{k}(K(N))$, respectively. These are the spaces of $\C$-valued paramodular forms, respectively, cusp forms. Moreover, we denote by $\mathcal{M}_{k}^{(1)}(\Gamma)$ the space of modular forms of degree $1$ and weight $k$ with respect to the congruence subgroup $\Gamma$ of $\SL_2(\Z)$, and by $\mathcal{S}_{k}^{(1)}(\Gamma)$ its subspace of cusp forms.

Let $N, k\in\Z_{>0}$ and $j\in\Z_{\geq 0}$. Let $F\in \mathcal{S}_{k, j}^{\mathrm{new}}(K(N))$ be a Hecke eigenform, i.e., it is an eigenvector for all the Hecke operator $T(n), (n, N)=1$. Denote by $\lambda_F(n)$ the eigenvalue of $F$ under the Hecke operator $T(n)$ when $(n,N)=1$. For any prime $p\nmid N$, we let $\alpha_{p,0},\alpha_{p,1},\alpha_{p,2}$ be the classical Satake parameters of $F$ at $p$. It is well known that
\begin{equation}
    \alpha_{p,0}^2\alpha_{p,1}\alpha_{p,2}=p^{2k-3}.
\end{equation}
Moreover, using these Satake parameters we can define the local spinor $L$-factor $L_p(s,F,\spin)$ of $F$ as follows.
\begin{equation}\label{localspin}
  L_p(s,F,\spin)^{-1}=(1-\alpha_{p,0}p^{-s})(1-\alpha_{p,0}\alpha_{p,1}p^{-s})(1-\alpha_{p,0}\alpha_{p,2}p^{-s})(1-\alpha_{p,0}\alpha_{p,1}\alpha_{p,2}p^{-s}).
\end{equation}
In fact, by \cite{Andrianov1974} one can rewrite $L_p(s,F,\spin)$ in terms of Hecke eigenvalues at $p$ and $p^2$, i.e., 
\begin{equation}\label{eisp}
L_p(s,F,\spin)^{-1}=1-\lambda_F(p)p^{-s}+(\lambda_F(p)^2-\lambda_F(p^2)-p^{2k-4})p^{-2s}-\lambda_F(p)p^{2k-3-3s}+p^{4k-6-4s}.
\end{equation}
In particular, by comparing \eqref{localspin} with \eqref{eisp} we have
\begin{align}
\lambda_F(p)&=p^{k-3/2}(\alpha_p+\alpha_p^{-1}+\beta_p+\beta_p^{-1})\label{lambda p},\\
\lambda_F(p^2)&=p^{2k-3}\left((\alpha_p+\alpha_p^{-1})^2+(\alpha_p+\alpha_p^{-1})(\beta_p+\beta_p^{-1})+(\beta_p+\beta_p^{-1})^2-2-1/p\right),\label{lambda p2}    
\end{align}    
where $\alpha_p=p^{3/2-k}\alpha_{p,0}$ and $\beta_p=\alpha_p\alpha_{p,1}$.
Let $S=\{p<\infty\colon p\mid N\}$ be a finite set of primes. Then we can define the partial spinor $L$-function
\begin{equation}
    L^S(s,F,\spin)=\prod_{p<\infty,\:p\notin S}L_p(s,F,\spin).
\end{equation}
Note that for $p\mid N$ we can also define the local $L$-factor $L_p(s, F,\spin)$ (see \cite{Schmidt2018}) and then we have the spinor $L$-function defined as an Euler product over all finite places by
\begin{equation}
L(s,F,\spin)=\left(\prod_{p\in S}L_p(s,F,\spin)\right)L^S(s,F,\spin).
\end{equation}
Moreover, we can associate with $F$ a cuspidal automorphic representation $\pi_F$ of $\GSp_4(\A)$ with trivial central character. Let $\rho_4$ be the $4$-dimensional irreducible representation of $\Sp_4(\C)$. In fact, $\rho_4$ is the natural representation of $\Sp_4(\C)$ on $\C^4$, which is also called the spin representation. For $\pi_F $ we can define the associated spinor $L$-function $L(s,\pi_F,\rho_4)$ of degree $4$ by
\begin{equation}\label{finite part of spinor L function}
L(s,\pi_F,\rho_4)=L(s+k-3/2,F,\spin)=\sum_{n=1}^{\infty}\frac{a_F(n)}{n^s},\quad\text{for } \re(s)>3/2.
\end{equation}
Note that we will use the notation $\Lambda(s,\pi_F,\rho_4)$ for the completed spinor $L$-function, which also contains the archimedean $L$-factors $L_\infty(s,\pi_F,\rho_4)$. 

On the other hand, let $\rho_5$ be the $5$-dimensional irreducible representation of $\Sp_4(\C)$. An explicit formula for the representation $\rho_5$ as a map $\Sp_4(\C)\to \SO_5(\C)$ is given in \cite[Appendix~A.7]{RobertsSchmidt2007}. Using the notation above, we can similarly define the standard $L$-function of degree $5$ by
\begin{equation}\label{finite part of standard L function}
    L(s,\pi_F, \rho_5)=\left(\prod_{p\in S}L_p(s, F, \mathrm{std})\right)L^S(s, F, \mathrm{std})=\sum_{n=1}^{\infty}\frac{b_F(n)}{n^s},\quad\text{for } \re(s)>3/2.
\end{equation}
Here, the local $L$-factor $L_p(s,F,\mathrm{std})$ for $p\mid N$ is as in \cite{Schmidt2018} and the partial standard $L$-function is defined by
\begin{equation}
L^S(s,F,\mathrm{std})=\prod_{p<\infty,\:p\notin S}L_p(s,F,\mathrm{std})
\end{equation}
with 
\begin{equation}\label{local standard L function}  
   L_p(s,F,\mathrm{std})^{-1}=(1-p^{-s})(1-\alpha_{p,1}p^{-s})(1-\alpha_{p,2}p^{-s})(1-\alpha_{p,1}^{-1}p^{-s})(1-\alpha_{p,2}^{-1}p^{-s}).
 \end{equation}
Again, we will use the notation $\Lambda(s,\pi_F,\rho_5)$ for its completed standard $L$-function. In fact, there is another way to interpret this representation $\rho_5$. More precisely, we have
\begin{equation}\label{ext rho 4 equals 1 plus rho 5}
    \Lambda^2\rho_4=\mathbf{1}\oplus\rho_5,
\end{equation}
where $\mathbf{1}$ is the trivial representation. Another useful relation for our purpose is
\begin{equation}\label{ext 5 equal sym 4}
\mathrm{sym}^2\rho_4=\Lambda^2\rho_5.
\end{equation}

We end this section with a brief introduction to Arthur packets, which give a classification for the discrete spectrum of $\GSp_4(\A)$. First, we point out that only representations with trivial central character, i.e., cuspidal automorphic representations of $\PGSp_4(\A)$, are considered in this work. Observing the exceptional isomorphism of the algebraic group $\PGSp_4$ with the split orthogonal group $\SO_5$, we can identify representations of $\SO_5(\A)$ with representations of $\PGSp_4(\A)$. Then it follows from \cite{Arthur2004} (see also \cite[pp.~3088-3089]{Schmidt2018}) that there are six different types of automorphic representations of $\PGSp_4(\A)$ in the discrete spectrum. Moreover, since we are interested in the paramodular forms, by \cite[Corollary~5.4]{Schmidt2020} only the type {\bf (P)} (consisting of Saito-Kurokawa liftings) and the type {\bf (G)} (the ``general" class, which are characterized by admitting a functorial transfer to a cuspidal automorphic representation of $\GL_4(\A)$) could happen in these forms, and only the type {\bf (G)} shows up if we consider the vector-valued paramodular forms. In particular, we sometimes write $\mathcal{S}^{\text{{\bf (G)}}}_{k, j}(K(N))$ to mean the subspace of $\mathcal{S}_{k, j}(K(N))$ spanned by all eigenforms of type {\bf (G)}, and similarly for the type {\bf (P)}. In summary, we have
\begin{equation}\label{para decomp to G and P types}
\mathcal{S}_{k, j}(K(N))=\mathcal{S}^{\text{{\bf (G)}}}_{k, j}(K(N))\oplus\mathcal{S}^{\text{{\bf (P)}}}_{k, j}(K(N)),\quad k, N\in\Z_{>0}, j\in\Z_{\geq 0},
\end{equation}
and 
\begin{equation}\label{para decomp to G and P types only G}
\mathcal{S}^{\text{{\bf (P)}}}_{k, j}(K(N))=\mathbf{0},\quad j>0.
\end{equation}
Moreover, by \eqref{para decomp to G and P types} and \cite[Table~2]{Schmidt2018} we can see that the spinor and standard $L$-functions (see \eqref{finite part of spinor L function} and \eqref{finite part of standard L function}) associated to paramodular forms are absolutely convergent for $\re(s)>3/2$. See \cite{Arthur2004, Schmidt2018, Schmidt2020} for more details about Arthur packets.  

\section{Strong multiplicity one in terms of the spinor \texorpdfstring{$L$}{L}-functions}
\label{section-spinor}
In this section, our main goal is to distinguish paramodular newforms by using the spinor $L$-functions. More precisely, we will first obtain some results on the strong multiplicity one theorem of general linear group $\GL_m$ (Theorem~\ref{SMO for GLn by L series}) and then apply the special case $m=4$ to distinguishing paramodular newforms as desired (Theorem~\ref{main thm 1}).
\subsection{Certain results on strong multiplicity one theorem of \texorpdfstring{$\GL_m$}{GLm}}\label{subsect of smo for GLm}
Let $\pi$ be a unitary cuspidal automorphic representation of $\GL_m(\A)$. For $\re(s)>1$, we associate with $\pi$ the $L$-function defined by the $L$-series
\begin{equation}
L(s,\pi)=\sum_{n=1}^{\infty}\frac{\lambda_{\pi}(n)}{n^s}.
\end{equation}

In this subsection, we mainly investigate the following conjecture on distinguishing cuspidal automorphic representations of the general linear groups from several aspects.

\begin{conjecture}\label{conjecture of SMO for unitary cuspidal automorphic repns}
Let $\pi$ and $\pi'$ be self-dual unitary cuspidal automorphic representations of $\GL_m(\A)$ and $\GL_{m'}(\A)$, respectively. Assume that $\lambda_{\pi}(p)=\lambda_{\pi'}(p)$ for almost all primes $p$. Then $\pi\simeq\pi'$.
\end{conjecture}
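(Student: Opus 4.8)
The plan is to convert the hypothesis into a statement about a single auxiliary Dirichlet series built from Rankin--Selberg $L$-functions, and then force a contradiction out of positivity. Since $\pi$ and $\pi'$ are self-dual, I would introduce
\begin{equation*}
F(s)\coloneqq\frac{L(s,\pi\times\pi)\,L(s,\pi'\times\pi')}{L(s,\pi\times\pi')^{2}}
\end{equation*}
and argue by contradiction, assuming $\pi\not\cong\pi'$. By the analytic theory of Rankin--Selberg $L$-functions for $\GL_m\times\GL_{m'}$ (Jacquet--Piatetski-Shapiro--Shalika, Moeglin--Waldspurger, Shahidi), each factor extends to $\re(s)\ge 1$ and is non-vanishing there, $L(s,\pi\times\pi)$ and $L(s,\pi'\times\pi')$ have simple poles at $s=1$ with positive residue (cuspidality), and $L(s,\pi\times\pi')$ has a pole at $s=1$ exactly when $\pi\cong\pi'$ (which would force $m=m'$). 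Hence, under our assumption, $F(s)$ is holomorphic on $\re(s)>1$ with a pole of order exactly $2$ at $s=1$, so $F(s)\to+\infty$, and therefore $\log F(s)\to+\infty$, as $s\to1^{+}$ along the reals.

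Next I would expand $\log F(s)$ as a Dirichlet series. Writing $\{\alpha_{i,p}\}$ and $\{\beta_{j,p}\}$ for the Satake parameters of $\pi_p,\pi'_p$ at an unramified prime $p$ and $a_k=\sum_i\alpha_{i,p}^k$, $b_k=\sum_j\beta_{j,p}^k$, multiplying out the Euler factors gives
\begin{equation*}
\log F(s)=\sum_{p}\sum_{k\ge 1}\frac{(a_k-b_k)^{2}}{k\,p^{ks}}+(\text{finitely many ramified terms}),
\end{equation*}
where the crucial point is that $a_k,b_k\in\R$: self-duality makes the Satake multisets stable under inversion and unitarity makes them stable under $z\mapsto\bar z^{-1}$, hence under complex conjugation. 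Thus every coefficient is non-negative, and the $k=1$ part equals $\sum_p(\lambda_\pi(p)-\lambda_{\pi'}(p))^{2}p^{-s}$, which by hypothesis is a \emph{finite} sum.

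Combining the two: since $\log F(s)\to+\infty$ as $s\to1^{+}$ while the $k=1$ part and the ramified terms stay bounded there, the tail $\sum_p\sum_{k\ge 2}(a_k-b_k)^{2}/(k\,p^{ks})$ must blow up as $s\to1^{+}$; being a Dirichlet series with non-negative coefficients, it then cannot converge at any real point $\le 1$, so its abscissa of convergence is $\ge 1$. On the other hand, a bound towards Ramanujan $|\alpha_{i,p}|,|\beta_{j,p}|\le p^{\theta}$ with $\theta<1/4$ yields $(a_k-b_k)^{2}\ll_k p^{2k\theta}$, so this tail converges for $\re(s)>\tfrac12+2\theta$, in particular at $s=1$ --- a contradiction. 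Hence $\pi\cong\pi'$, and multiplicity one for $\GL_m$ upgrades this to $\pi=\pi'$.

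The hard part is precisely securing $\theta<1/4$. It holds unconditionally for $m\le 2$ (Kim--Sarnak, $\theta=7/64$) and, since self-dual cuspidal $\GL_3$ representations are symmetric-square lifts, for $m=3$ as well ($\theta=7/32$), but it is not known for general $\GL_m$; so in full generality this argument establishes the conjecture only conditionally on a sufficient approximation to the generalized Ramanujan conjecture. For the paramodular applications below, the transfers to $\GL_4$ and $\GL_5$ attached to forms of Arthur type \textbf{(G)} are tempered (Weissauer), so $\theta=0$ and the argument is unconditional; the non-tempered Saito--Kurokawa/type \textbf{(P)} possibilities must be isolated separately, where the relevant Euler factor splits off a shift of $\zeta(s)$ and comparing the growth of $\lambda_\pi(p)$ forces the two forms into the same shape, reducing the problem to the $\GL_1$ and $\GL_2$ cases.
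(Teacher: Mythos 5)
First, a point of framing: the statement you are proving is stated in the paper as a \emph{conjecture}, and the paper never claims an unconditional proof; what it proves (Theorem~\ref{SMO for GLn by L series}) is a list of sufficient conditions. Your argument, as you yourself acknowledge in the last paragraph, also only establishes the statement under a partial Ramanujan bound $\theta<1/4$ for both representations (or GRC). So as a proof of the conjecture as stated there is a genuine gap: the bound $\theta<1/4$ is not known for $\GL_m$ with $m\geq 4$ (the best unconditional bound of Luo--Rudnick--Sarnak is $\theta=1/2-1/(m^2+1)$, which is $>1/4$ already for $m=4$), so your positivity argument with $F(s)=L(s,\pi\times\pi)L(s,\pi'\times\pi')/L(s,\pi\times\pi')^{2}$ cannot close the case most relevant to this paper. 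Within its conditional scope the argument is fine, and it is essentially the same mechanism as the paper's Proposition~\ref{SMO for GLn by L series under GRC}: there the authors compare $\log L^S(s,\pi\otimes\pi)$ and $\log L^S(s,\pi\otimes\pi')$ directly as $s\to1^+$, using the Ramanujan-type hypothesis to make the $\ell\geq2$ terms $O(1)$, rather than packaging the comparison into a ratio with non-negative coefficients; the two are interchangeable.

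What your route misses is the paper's second, and more important, mechanism: instead of demanding $\theta<1/4$, the authors go through Rudnick--Sarnak's Hypothesis~H and Selberg orthogonality (see \eqref{H implies SOC implies main conjecture}), where one only needs convergence of $\sum_p |a_\pi(p^\ell)|^2\log^2 p\, p^{-\ell}$ for $\ell\geq2$. This is known unconditionally for $m\leq 4$ (Rudnick--Sarnak for $m\leq3$, Kim's appendix for $m=4$), and the paper extends it to $m=5$ (Proposition~\ref{Strong MO for GL5 conditional}) and to general $m$ (Proposition~\ref{Strong MO for GLn conditional}) assuming only absolute convergence of $L(s,\sym^2\pi)$ (and $\sym^3$) for $\re(s)>1$ --- hypotheses much weaker than a $\theta<1/4$ bound, exploited via the elementary inequality of Lemma~\ref{technical lemma for n 5} and identities like \eqref{GL5 Satake parameters square summation}. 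This is exactly what makes the paramodular application (Theorem~\ref{main thm 1}) unconditional: it uses the $m\leq4$ case for \emph{all} weights $(k,j)\in\Z_{>0}\times\Z_{\geq0}$. Your proposed fallback --- temperedness of the type {\bf (G)} transfers via Weissauer --- covers only holomorphic forms of weight $k\geq3$ that are not CAP, so it would not recover the paper's result in the low-weight cases, whereas the Hypothesis~H route needs no temperedness at all. If you want your write-up to stand as a proof of what is actually used later in the paper, you should replace the $\theta<1/4$ input for $m=4$ by the Hypothesis~H/Selberg-orthogonality argument (or cite it), and keep the positivity argument only as the conditional statement it is.
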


Here are several observations that support the Conjecture~\ref{conjecture of SMO for unitary cuspidal automorphic repns}. 
First, since we assume that such two representations are self-dual, Conjecture~\ref{conjecture of SMO for unitary cuspidal automorphic repns} is actually an immediate consequence of the Selberg orthogonality conjecture for a pair of two self-dual cuspidal automorphic representations, which is stated as follows.
\begin{conjecture}[Selberg's orthogonality conjecture]\label{Selberg orthogonality conjecture}
Let $\pi$ and $\pi'$ be self-dual unitary cuspidal automorphic representations of $\GL_m(\A)$ and $\GL_{m'}(\A)$, respectively. Then for any $x>0$,
\begin{equation}\label{Selberg orthogonality arbitrary}
    \sum_{p\leq x,p\notin S}\frac{\lambda_{\pi}(p)\lambda_{\pi'}(p)}{p}=
    \begin{cases}
       \log\log x+O_{\pi,\pi',S}(1) & \mbox{if $\pi\simeq\pi'$}, \\
         O_{\pi,\pi',S}(1)& \mbox{otherwise}.
    \end{cases}
\end{equation}
Here, $S$ is a finite set of primes. 
\end{conjecture}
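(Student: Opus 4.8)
\medskip
\noindent\textbf{Proof proposal.}
This conjecture is open for general $m,m'$, but the standard route toward it is essentially complete for $\GL_m\times\GL_{m'}$ once the pair satisfies the Rudnick--Sarnak hypothesis on prime powers (e.g. when $m,m'\le 4$), and I will outline that route. The plan is to study the Rankin--Selberg $L$-function $L(s,\pi\times\pi')$ — here $\widetilde{\pi'}=\pi'$ by self-duality — whose analytic properties I take as input from Jacquet--Piatetski-Shapiro--Shalika, Moeglin--Waldspurger and Shahidi: it continues to an entire function on $\C$ except for a simple pole at $s=1$, this pole occurring precisely when $\pi\cong\pi'$, and it does not vanish on the line $\re(s)=1$.

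First I would pass to the logarithmic derivative and write, for $\re(s)>1$,
\begin{equation*}
-\frac{L'}{L}(s,\pi\times\pi')=\sum_{n\ge1}\frac{\Lambda(n)\,a_{\pi\times\pi'}(n)}{n^{s}},
\end{equation*}
where $\Lambda$ is the von Mangoldt function, $a_{\pi\times\pi'}(p)=\lambda_\pi(p)\overline{\lambda_{\pi'}(p)}$ at the unramified primes, and $a_{\pi\times\pi'}(p^k)$ for $k\ge 2$ is a fixed symmetric function of the local Satake parameters. The pole statement becomes: $-\frac{L'}{L}(s,\pi\times\pi')$ has a simple pole of residue $1$ at $s=1$ if $\pi\cong\pi'$ and is holomorphic at $s=1$ otherwise. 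Next I would run a Tauberian/Mertens argument — either integrating $-\frac{L'}{L}$ against a suitable smooth weight across $\re(s)=1$ or invoking a Wiener--Ikehara-type theorem, using the functional equation, a standard zero-free region near $\re(s)=1$, and convexity bounds for the vertical growth — to deduce
\begin{equation*}
\sum_{n\le x}\frac{\Lambda(n)\,a_{\pi\times\pi'}(n)}{n\log n}=\begin{cases}\log\log x+O_{\pi,\pi'}(1)&\text{if }\pi\cong\pi',\\ O_{\pi,\pi'}(1)&\text{otherwise.}\end{cases}
\end{equation*}

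The last step would be to pass from this weighted prime-power sum to the sum over primes appearing in \eqref{Selberg orthogonality arbitrary}: separating off the $k=1$ term reduces matters to showing $\sum_p\sum_{k\ge 2}p^{-k}\,|a_{\pi\times\pi'}(p^k)|<\infty$, and removing a finite set $S$ of primes only affects the constants. The hard part — and exactly why the statement remains a conjecture for general $m,m'$ — is the control of these higher prime-power terms: it requires nontrivial progress toward the Ramanujan conjecture (Luo--Rudnick--Sarnak-type bounds on the Satake parameters) for the terms with $k$ large, while for the $k=2$ terms it is essentially the Rudnick--Sarnak hypothesis H, known only for $\GL_m$ with $m\le 4$ together with a few further cases; one must also ensure, via an effective Siegel-type zero-free region, that $L(s,\pi\times\pi')$ has no exceptional real zero too close to $s=1$ that would spoil the $O(1)$ error term.
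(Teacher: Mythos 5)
This statement is labeled a \emph{conjecture} in the paper, and the paper gives no proof of it: it is used as an input (via Lemma~\ref{lemma SOC implies main Conjecture}) and is only established in special cases later in Section~\ref{subsect of smo for GLm}, through the chain \eqref{H implies SOC implies main conjecture} and the verification of Hypothesis~\ref{Hypothesis implies Selberg orthogonality} for $m\leq 4$ (Theorem~\ref{Thm Hypothesis H holds for m leq 4}) and, conditionally on convergence of symmetric power $L$-functions, for $m=5$ and general $m$ (Propositions~\ref{Strong MO for GL5 conditional} and \ref{Strong MO for GLn conditional}). Your proposal correctly recognizes that the statement is open in general and does not claim a proof; that is the right call, and your sketch — analytic properties of $L(s,\pi\times\pi')$ from Jacquet--Piatetski-Shapiro--Shalika, Moeglin--Waldspurger and Shahidi, plus control of the prime-power terms $p^{-k}a_{\pi\times\pi'}(p^k)$ for $k\geq 2$ via Hypothesis H or partial Ramanujan bounds — is exactly the Rudnick--Sarnak route that the paper relies on for its partial results. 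Two small remarks: the removal of the higher prime powers is precisely Hypothesis H (for $k=2$) together with bounds toward Ramanujan (for $k\geq 3$), which is how the paper proceeds in the proofs of Propositions~\ref{Strong MO for GL5 conditional} and \ref{Strong MO for GLn conditional}, so your identification of the obstruction matches the paper's; and the worry about an exceptional real zero is not actually needed for an $O_{\pi,\pi',S}(1)$ error term, since the implied constants are allowed to depend on $\pi$ and $\pi'$, so the standard argument (as in Rudnick--Sarnak) goes through without any Siegel-type input.
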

\begin{lemma}\label{lemma SOC implies main Conjecture}
Selberg's orthogonality conjecture implies Conjecture~\ref{conjecture of SMO for unitary cuspidal automorphic repns}.
\end{lemma}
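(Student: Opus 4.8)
The plan is to prove the contrapositive: assuming $\pi\neq\pi'$, I will derive a contradiction directly from Selberg's orthogonality conjecture. Since $\pi$ and $\pi'$ are self-dual, their contragredients satisfy $\widetilde{\pi}\cong\pi$ and $\widetilde{\pi'}\cong\pi'$, and hence $\lambda_{\pi}(p)=\overline{\lambda_{\pi}(p)}$, $\lambda_{\pi'}(p)=\overline{\lambda_{\pi'}(p)}$; in particular $\lambda_{\pi}(p),\lambda_{\pi'}(p)\in\R$ at every unramified $p$, and the Dirichlet coefficient appearing in \eqref{Selberg orthogonality arbitrary} for the pair $(\pi,\pi')$ is exactly the product $\lambda_{\pi}(p)\lambda_{\pi'}(p)$ (no complex conjugate is needed). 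This reality is what makes the diagonal sums $\sum\lambda_{\pi}(p)^2/p$ rather than $\sum|\lambda_{\pi}(p)|^2/p$ the quantities controlled by Conjecture~\ref{Selberg orthogonality conjecture}.

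Next I would fix a finite set of primes $S$ containing the ramified primes of $\pi$ and of $\pi'$, together with the finitely many primes $p$ at which $\lambda_{\pi}(p)\neq\lambda_{\pi'}(p)$ (finite by hypothesis). Applying Conjecture~\ref{Selberg orthogonality conjecture} to the three pairs $(\pi,\pi)$, $(\pi',\pi')$, and $(\pi,\pi')$ with this $S$ — using the ``$\pi=\pi'$'' case for the first two and the ``otherwise'' case for the third, since $\pi\neq\pi'$ — gives
\[
\sum_{p\le x,\,p\notin S}\frac{\lambda_{\pi}(p)^2}{p}=\log\log x+O_{\pi,S}(1),\qquad
\sum_{p\le x,\,p\notin S}\frac{\lambda_{\pi'}(p)^2}{p}=\log\log x+O_{\pi',S}(1),
\]
together with
\[
\sum_{p\le x,\,p\notin S}\frac{\lambda_{\pi}(p)\lambda_{\pi'}(p)}{p}=O_{\pi,\pi',S}(1).
\]

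Finally, I would observe that for every $p\notin S$ we have $\lambda_{\pi}(p)=\lambda_{\pi'}(p)$ by construction of $S$, so the three displayed sums are \emph{literally the same} function of $x$. Comparing the first display with the last yields $\log\log x+O_{\pi,S}(1)=O_{\pi,\pi',S}(1)$ for all $x>0$, which is absurd as $x\to\infty$. Hence $\pi=\pi'$. (Equivalently, one may package this as $0=\sum_{p\le x,\,p\notin S}(\lambda_{\pi}(p)-\lambda_{\pi'}(p))^2/p=2\log\log x+O_{\pi,\pi',S}(1)$, making the contradiction transparent.)

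There is essentially no serious obstacle once Selberg's orthogonality is granted; the argument is pure bookkeeping. The only points requiring a little care are: (i) the genuine use of self-duality, both to get real Hecke eigenvalues and to match the cross term in \eqref{Selberg orthogonality arbitrary}; and (ii) the degenerate case $m\neq m'$, where $\pi\neq\pi'$ holds automatically and the identical computation applies verbatim, so no separate treatment is needed.
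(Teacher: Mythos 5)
Your proposal is correct and uses essentially the same argument as the paper: since the eigenvalues agree off a finite set, the cross sum $\sum_{p\le x,\,p\notin S}\lambda_{\pi}(p)\lambda_{\pi'}(p)/p$ coincides (up to $O(1)$) with the diagonal sum, which grows like $\log\log x$, forcing the ``$\pi=\pi'$'' case of \eqref{Selberg orthogonality arbitrary}. You merely phrase it as a contrapositive/contradiction, which is an inessential difference.
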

\begin{proof}
Since $\lambda_{\pi}(p)=\lambda_{\pi'}(p)$ for almost all primes $p$, we have
\[\sum_{p\leq x,p\notin S}\frac{\lambda_{\pi}(p)\lambda_{\pi'}(p)}{p}=\sum_{p\leq x,p\notin S}\frac{\lambda_{\pi}(p)\lambda_{\pi}(p)}{p}+O(1)\overset{\eqref{Selberg orthogonality arbitrary}}=\log\log x+O(1)\]
for sufficiently large $x$. This implies $\pi\simeq\pi'$ by applying \eqref{Selberg orthogonality arbitrary} again.
\end{proof}
On the other hand, as in the following proposition, we will show that Conjecture~\ref{conjecture of SMO for unitary cuspidal automorphic repns} also follows from the Generalized Ramanujan Conjecture.
\begin{proposition}\label{SMO for GLn by L series under GRC}
If one of the following conditions holds, then  Conjecture~\ref{conjecture of SMO for unitary cuspidal automorphic repns} is true:
\begin{enumerate}[(i)]
    \item
    The Generalized Ramanujan Conjecture holds for at least one of $\pi$ and $\pi'$.
    \item A partial Ramanujan bound for some $\theta<1/4$ holds for both $\pi$ and $\pi'$.
\end{enumerate}
\end{proposition}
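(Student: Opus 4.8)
The plan is to reduce the statement to the standard analytic properties of Rankin--Selberg $L$-functions, combined with the hypotheses on the Satake parameters. Since $\pi,\pi'$ are self-dual, $\widetilde{\pi}\cong\pi$ and $\widetilde{\pi'}\cong\pi'$, and I would use the classical facts (Jacquet--Piatetski-Shapiro--Shalika, Shahidi, Moeglin--Waldspurger) that $L(s,\pi\times\pi')$ continues to a function holomorphic on $\re(s)\ge1$ apart from a possible simple pole at $s=1$, is non-vanishing on the line $\re(s)=1$, and has a pole at $s=1$ if and only if $m=m'$ and $\pi'\cong\widetilde{\pi}\cong\pi$. Taking the logarithm of the Euler product, for $\re(s)>1$,
\[
\log L(s,\pi\times\pi')=\sum_{p}\frac{\lambda_\pi(p)\lambda_{\pi'}(p)}{p^{s}}+\sum_{p}\sum_{k\ge2}\frac{b_{\pi,\pi'}(p^{k})}{k\,p^{ks}}+R(s),
\]
where at a prime $p$ unramified for both $\pi,\pi'$ one has $b_{\pi,\pi'}(p^{k})=\sum_{i,j}\bigl(\alpha_{\pi,i}(p)\alpha_{\pi',j}(p)\bigr)^{k}$ (so that the prime term is indeed $\lambda_\pi(p)\lambda_{\pi'}(p)$), and $R(s)$ gathers the finitely many ramified local factors and is holomorphic near $s=1$.

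The first step is to show the inner sum over $k\ge 2$ remains holomorphic slightly to the left of $\re(s)=1$. Under condition (i), say GRC for $\pi$ (the case of $\pi'$ is symmetric), $|\alpha_{\pi,i}(p)|=1$, and together with the unconditional bound $|\alpha_{\pi',j}(p)|\le p^{\vartheta}$ for some $\vartheta<1/2$ this gives $|b_{\pi,\pi'}(p^{k})|\le mm'\,p^{k\vartheta}$; under condition (ii), the bound $|\alpha_{\pi,i}(p)|,|\alpha_{\pi',j}(p)|\le p^{\theta}$ with $\theta<1/4$ gives $|b_{\pi,\pi'}(p^{k})|\le mm'\,p^{2k\theta}$. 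In either case, writing $\eta<1/2$ for the resulting exponent, $\sum_{p}\sum_{k\ge2}b_{\pi,\pi'}(p^{k})/(k\,p^{ks})$ converges absolutely and is holomorphic for $\re(s)>\eta+\tfrac12$, a half-plane containing $s=1$. Combined with the analytic facts above, this yields, in a neighbourhood of $s=1$,
\[
\sum_{p}\frac{\lambda_\pi(p)\lambda_{\pi'}(p)}{p^{s}}=
\begin{cases}
\log\dfrac{1}{s-1}+g_{1}(s),& \pi\cong\pi',\\[1.5ex]
g_{0}(s),& \pi\not\cong\pi',
\end{cases}
\]
with $g_{0},g_{1}$ holomorphic near $s=1$; applying the same argument to the (self-dual, trivially isomorphic) pair $(\pi,\pi)$ gives $\sum_{p}\lambda_\pi(p)^{2}p^{-s}=\log\frac{1}{s-1}+g(s)$ near $s=1$, which tends to $+\infty$ as $s\to1^{+}$ since $L(s,\pi\times\pi)$ has a simple pole at $s=1$.

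To finish, assume $\lambda_\pi(p)=\lambda_{\pi'}(p)$ for all $p$ outside a finite set $T$. Then
\[
\sum_{p}\frac{\lambda_\pi(p)\lambda_{\pi'}(p)}{p^{s}}-\sum_{p}\frac{\lambda_\pi(p)^{2}}{p^{s}}=\sum_{p\in T}\frac{\lambda_\pi(p)\bigl(\lambda_{\pi'}(p)-\lambda_\pi(p)\bigr)}{p^{s}}
\]
is a finite Dirichlet polynomial, in particular bounded as $s\to1^{+}$. If $\pi\not\cong\pi'$ the first series on the left is bounded near $s=1$ while $\sum_p\lambda_\pi(p)^2p^{-s}\to+\infty$, a contradiction; hence $\pi\cong\pi'$, which forces $m=m'$, and multiplicity one for $\GL_m$ then gives $\pi=\pi'$. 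I expect the only genuinely delicate point to be the holomorphy of the prime-power tail just to the left of the line $\re(s)=1$: this is precisely where ``GRC for one of $\pi,\pi'$'' or ``$\theta<1/4$ for both'' enters, and the threshold $\theta<1/4$ in (ii) is exactly what is needed to keep $\sum_{p}\sum_{k\ge2}b_{\pi,\pi'}(p^{k})/(k\,p^{ks})$ holomorphic at $s=1$.
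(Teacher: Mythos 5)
Your proposal is correct and follows essentially the same route as the paper: take logarithms of the Rankin--Selberg $L$-functions $L(s,\pi\times\pi)$ and $L(s,\pi\times\pi')$, use condition (i) or (ii) to show the prime-power tail ($k\ge 2$) is bounded near $s=1$ (with $\theta<1/4$ exactly controlling the $k=2$ terms), and compare the two prime sums to force a pole of $L(s,\pi\times\pi')$ at $s=1$, hence $\pi'\cong\widetilde{\pi}\cong\pi$ by self-duality. The only cosmetic differences are that the paper works with partial $L$-functions $L^S$ rather than separating ramified factors into $R(s)$, and it does not need the non-vanishing on $\re(s)=1$ that you invoke.
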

\begin{proof}
Let $S$ be a finite set of primes such that it satisfies the following conditions:
\begin{enumerate}[(a)]
    \item It contains all ramified primes $p$ of $\pi$ and $\pi'$;
    \item For all $p\notin S$, we have that $\lambda_{\pi}(p)=\lambda_{\pi'}(p)$.
\end{enumerate}
Without loss of generality, we assume that $m\geq m'$. For $p\notin S$, let $\{\alpha_{p,i}\}_{i=1}^{m}$ (resp. $\{\beta_{p,j}\}_{j=1}^{m'}$) be the Satake parameters of $\pi$ (resp. $\pi'$) at $p$.\footnote{We write $|\alpha_{p, i}|\leq p^\theta$, for some $\theta<1/2$, to indicate progress toward the
Ramanujan bound ($\theta=0$), referring to this as a ``partial Ramanujan bound".} Then we define the following partial Rankin-Selberg $L$-functions
\begin{equation}\label{Rankin-Selberg pi_1 pi_1}
    L^S(s,\pi\otimes\pi)=\prod_{p\notin S}\prod_{i,j=1}^m\left(1-\frac{\alpha_{p,i}\alpha_{p,j}}{p^s}\right)^{-1},
\end{equation}
and
\begin{equation}\label{Rankin-Selberg pi_1 pi_2}
    L^S(s,\pi\otimes\pi')=\prod_{p\notin S}\prod_{i=1}^m\prod_{j=1}^{m'}\left(1-\frac{\alpha_{p,i}\beta_{p,j}}{p^s}\right)^{-1}.
\end{equation}
It is well known that they are both absolutely convergent for $\re(s)>1$. 
In fact, at any place $p$, following \cite{JacquetShalika198101, JacquetShalika198102}, we can define the local zeta integral
\begin{equation*}
\begin{cases}
\ds \Psi_p(s,W_p,W_p^\prime, \Phi_p) = \int_{N_{m}(\Q_p)\backslash \GL_{m}(\Q_p)}W_p(g)W_p^\prime(g)\Phi_p(\epsilon g)|\det g|_p^s\,dg \ \  &\text{ if } m=m^\prime, \\
\ds \Psi_p(s,W_p,W_p^\prime) = \int_{N_{m'}(\Q_p)\backslash \GL_{m'}(\Q_p)}W_p\left(\begin{bsmallmatrix}
g&0\\
0&I_{m-m'}
\end{bsmallmatrix}\right)W'_p(g)|\det g|_p^{s-\frac{m-m^\prime}{2}}\,dg \ \  &\text{ if } m>m^\prime.
\end{cases}
\end{equation*}
Here, $W_p\in \mathcal{W}(\pi_p,\psi_p)$ and $W_p^\prime\in \mathcal{W}(\pi^\prime_p,\psi_p^{-1})$ are Whittaker functions of $\pi_p$ and $\pi^\prime_p$, respectively, where $\psi_p$ is a non-trivial additive character of $\mathbb{Q}_p$, and $N_m$ is the upper triangular unipotent subgroup of $\GL_m$. In the case $m=m^\prime$,
$\Phi_p\in \mathcal{S}(\mathbb{Q}_p^{m})$ is a Schwartz function, and $\epsilon$ is the row vector $(0, \cdots, 0, 1)$.  
By \cite[Propositions~1.5, 3.17]{JacquetShalika198101} and \cite[Propositions~1.5, 2.6]{JacquetShalika198102}, the local integrals $\Psi_p(s,W_p,W_p^\prime, \Phi_p)$ and $\Psi_p(s,W_p,W_p^\prime)$ converge absolutely for $\mathrm{Re}(s)\ge 1$, and hence they have no poles at $s=1$ at any local place. Moreover, at a finite unramified place $p\not\in S$, we have
\begin{equation*}
\begin{cases}
\ds \Psi_p(s,W_p^0,{W_p^{\prime}}^0, \Phi_p^0) = \prod_{i=1}^m\prod_{j=1}^{m'}\left(1-\frac{\alpha_{p,i}\beta_{p,j}}{p^s}\right)^{-1} \ \  &\text{ if } m=m^\prime, \\
\ds \Psi_p(s,W_p^0,{W_p^{\prime}}^0)  = \prod_{i=1}^m\prod_{j=1}^{m'}\left(1-\frac{\alpha_{p,i}\beta_{p,j}}{p^s}\right)^{-1} \ \  &\text{ if } m>m^\prime,
\end{cases}
\end{equation*}
where $W_p^0,{W_p^{\prime}}^0$ are unramified Whittaker functions normalized such that $W_p^0(I_m)={W_p^{\prime}}^0(I_{m^\prime})=1$, and $\Phi_p^0$ is the characteristic function of $\mathbb{Z}_p^m$. By \cite[Propositions~1.5, 3.17]{JacquetShalika198101} and \cite[Propositions~1.5, 2.6]{JacquetShalika198102}, there exists a choice of data so that the local zeta integral is non-vanishing at $s=1$ at any local place. Since $\pi$ is self-dual, it follows from \cite{JacquetShalika198102} that the completed $L$-function $\Lambda(s,\pi\otimes\pi)$ has a simple pole at $s=1$. Consequently, the partial $L$-function $L^S(s,\pi\otimes\pi)$ also has a simple pole at $s=1$. 

Next, still for $\re(s)>1$, by taking the logarithm of both sides of \eqref{Rankin-Selberg pi_1 pi_1}, we obtain that
\begin{equation}\label{log pi_1 pi_1}
\begin{split}
\log L^S(s,\pi\otimes\pi)&=-\sum_{p\notin S}\sum_{i,j=1}^m\log\left(1-\frac{\alpha_{p,i}\alpha_{p,j}}{p^s}\right)=\sum_{p\notin S}\sum_{i,j=1}^m\sum_{\ell=1}^{\infty}\frac{\alpha_{p,i}^{\ell}\alpha_{p,j}^{\ell}}{\ell p^{\ell s}}\\
&=\sum_{p\notin S}\sum_{i,j=1}^m\frac{\alpha_{p,i}\alpha_{p,j}}{p^{s}}+\sum_{p\notin S}\sum_{\ell=2}^{\infty}\sum_{i,j=1}^m\frac{\alpha_{p,i}^{\ell}\alpha_{p,j}^{\ell}}{\ell p^{\ell s}}\\
&=\sum_{p\notin S}\sum_{i,j=1}^m\frac{\alpha_{p,i}\alpha_{p,j}}{p^{s}}+O(1)=\sum_{p\notin S}\frac{\lambda_{\pi}(p)^2}{p^s}+O(1).
\end{split}
\end{equation}
Here, the $O(1)$ term is due to the condition (i) or (ii). A similar argument shows that
\begin{equation}\label{log pi_1 pi_2}
\log L^S(s,\pi\otimes\pi')=\sum_{p\notin S}\frac{\lambda_{\pi}(p)\lambda_{\pi'}(p)}{p^s}+O(1)=\sum_{p\notin S}\frac{\lambda_{\pi}(p)^2}{p^s}+O(1).
\end{equation}
The second equality in \eqref{log pi_1 pi_2} is due to the assumption that $\lambda_{\pi}(p)=\lambda_{\pi'}(p)$ for all $p\notin S$. By comparing \eqref{log pi_1 pi_1} and \eqref{log pi_1 pi_2}, we can see that $L^S(s,\pi\otimes\pi')$ and hence $\Lambda(s,\pi\otimes\pi')$ go to infinity as $s\to 1^+$, i.e., $\Lambda(s,\pi\otimes\pi')$ has a simple pole at $s=1$. It follows from \cite{JacquetPiatetski-ShapiroShalika1983} that $\pi'\simeq \widetilde{\pi} \simeq \pi$ as desired.
\end{proof}

\begin{remark}
    In the non self-dual cases this is known from the work of Avdispahi\'c and Smajlovi\'c \cite{AvdispahicSmajlovic2010}.
\end{remark}

We first point out that Farmer, Pitale, Ryan and Schmidt \cite{FarmerPitaleRyanSchmidt2013arxiv} have also investigated the strong multiplicity one theorem for $L$-functions with a representation-theoretic approach, in which they assume a partial Ramanujan bound $\theta<1/6$ for both functions, plus an additional condition (i.e., the Dirichlet coefficients at the primes being close to each other); see \cite[Theorem~2.2]{FarmerPitaleRyanSchmidt2013arxiv}.

In fact, under the assumptions in Proposition~\ref{SMO for GLn by L series under GRC}, we can actually prove Selberg's orthogonality conjecture by following \cite[Theorem~5.13]{IwKo2004}. In particular, this indicates that the Generalized Ramanujan Conjecture is stronger than Selberg's orthogonality conjecture. Based on this observation, we will prove Conjecture~\ref{conjecture of SMO for unitary cuspidal automorphic repns} in some special cases by showing Selberg's orthogonality conjecture without assuming the Generalized Ramanujan Conjecture. Before the proof, we shall introduce the following hypothesis proposed by Rudnick and Sarnak \cite{RudnickSarnak1996}. Let $\{\alpha_{p,1},\ldots,\alpha_{p,m}\}$ (resp. $\{\beta_{p,1},\ldots,\beta_{p,m'}\}$) be the Satake parameters for $\pi$ (resp.  $\pi'$) as above. For $\ell\geq 1$, we define
\begin{equation}\label{Eqnapipell}
 a_{\pi}(p^{\ell})=\alpha_{p,1}^{\ell}+\cdots+\alpha_{p,m}^{\ell}\quad\text{and}\quad a_{\pi'}(p^{\ell})=\beta_{p,1}^{\ell}+\cdots+\beta_{p,m'}^{\ell}.   
\end{equation}

\begin{hypothesis}[\text{\cite[Hypothesis~{\bf H}]{RudnickSarnak1996}}]\label{Hypothesis implies Selberg orthogonality}
Let $\pi$ be a self-dual unitary cuspidal automorphic representation of $\GL_m(\A)$. Then for $\ell\geq 2$, we have
 \begin{equation}\label{pi H}
 \sum_{p}\frac{|a_{\pi}(p^{\ell})|^2\log^2p}{p^{\ell}}<\infty.
 \end{equation}
\end{hypothesis}

\begin{theorem}\label{Thm Hypothesis H holds for m leq 4}
Hypothesis~\ref{Hypothesis implies Selberg orthogonality} holds for $m\leq 4$.
\end{theorem}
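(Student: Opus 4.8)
The plan is to verify Hypothesis~\ref{Hypothesis implies Selberg orthogonality} by a case analysis on $m\in\{1,2,3,4\}$, reducing everything to known non-trivial bounds toward Ramanujan for $\GL_1$ and $\GL_2$ cusp forms together with the isobaric classification of low-rank cuspidal representations. The key input is that for each $\ell\ge 2$ the Dirichlet series $\sum_n a_\pi(p^\ell)\cdots$ is (essentially) the logarithmic derivative of the $L$-function of the Adams operation / symmetric- or exterior-power type construction $\pi^{(\ell)}$, so convergence of $\sum_p |a_\pi(p^\ell)|^2\log^2 p\,p^{-\ell}$ follows as soon as one knows a bound $|\alpha_{p,i}|\le p^{\theta}$ with $\theta<\tfrac14$ for the Satake parameters of $\pi$: indeed then $|a_\pi(p^\ell)|^2\le m^2 p^{2\ell\theta}$ and $\sum_p p^{2\ell\theta-\ell}\log^2p<\infty$ once $\ell(1-2\theta)>1$, which for $\ell\ge 2$ holds whenever $\theta<\tfrac14$. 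So the whole theorem reduces to: \emph{every self-dual unitary cuspidal automorphic representation of $\GL_m(\A)$ with $m\le 4$ satisfies the Ramanujan bound with exponent $\theta<\tfrac14$ at every finite place} (and an analogous, easier statement at the archimedean place is not needed here since Hypothesis~{\bf H} as stated only sums over finite $p$).

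First I would dispose of $m=1$: a unitary self-dual Hecke character has $|\alpha_p|=1$, so $\theta=0$ and \eqref{pi H} is immediate (in fact $|a_\pi(p^\ell)|=1$ and $\sum_p \log^2p/p^\ell<\infty$ for $\ell\ge2$). For $m=2$, $\pi$ corresponds to a self-dual cuspidal representation of $\GL_2$; by Kim--Sarnak the Satake parameters satisfy $|\alpha_{p,i}|\le p^{7/64}$, and $7/64<1/4$, so the bound above applies. For $m=3$, by the work of Gelbart--Jacquet a self-dual cuspidal $\pi$ on $\GL_3$ is either (the twist of) a symmetric-square lift $\mathrm{sym}^2\tau$ of a $\GL_2$ cusp form $\tau$, or is not of that form but is still symplectic/orthogonal self-dual — in either case its Satake parameters are bounded in terms of those of a $\GL_2$ object, giving $|\alpha_{p,i}|\le p^{14/64}<p^{1/4}$. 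For $m=4$, one invokes the classification of self-dual cuspidal representations of $\GL_4$: either $\pi$ is the functorial transfer of a generic cuspidal representation of $\GSp_4$ / $\SO_5$ (resp. of $\SO(2,2)$), or it is $\mathrm{sym}^3$ of a $\GL_2$ form, or an Asai/tensor-induction from a quadratic field of $\GL_2$, etc.; in every case the Satake parameters are (bounded by) monomials in $\GL_2$ Satake parameters of bounded degree, and Kim--Sarnak again yields $\theta\le 3\cdot\tfrac{7}{64}=\tfrac{21}{64}<\tfrac14$. In each case I would just cite the relevant automorphy/classification results (Gelbart--Jacquet, Kim, Ramakrishnan--Shahidi, Arthur) rather than reproving them.

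The main obstacle is the $m=4$ case: one needs the full classification of self-dual unitary cuspidal automorphic representations of $\GL_4(\A)$ into the families that are known to come, via established functoriality, from $\GL_2$-type data on which Kim--Sarnak applies — i.e. one must make sure no self-dual cuspidal $\GL_4$ representation escapes the reach of known Ramanujan-type bounds with exponent $<\tfrac14$. This is exactly where one leans on Arthur's classification for $\GSp_4$/$\SO_5$ (already cited in the excerpt) to handle the symplectic self-dual case, on the analogous picture for $\SO(2,2)\sim(\GL_2\times\GL_2)/\GL_1$ for the orthogonal self-dual case, and on Kim's and Ramakrishnan--Shahidi's work on $\mathrm{sym}^3$ and tensor-product functoriality for the remaining monomial cases; assembling these and extracting a uniform $\theta<\tfrac14$ is the only delicate bookkeeping. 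Everything after that — plugging $\theta$ into $\sum_p p^{2\ell\theta-\ell}\log^2 p$ and noting the sum converges for all $\ell\ge 2$ — is routine.
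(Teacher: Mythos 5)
There is a genuine gap, and it is concentrated in your $m=4$ case (with a smaller one at $m=3$). Your reduction of Hypothesis~\ref{Hypothesis implies Selberg orthogonality} to a pointwise bound $|\alpha_{p,i}|\le p^{\theta}$ with $\theta<1/4$ is fine as far as it goes, but the bound you then claim for $\GL_4$ is not available. First, the arithmetic already fails: $3\cdot\tfrac{7}{64}=\tfrac{21}{64}>\tfrac14$, so even the $\sym^3$ case of your classification does not land under the threshold you need. Second, and more seriously, there is no classification of self-dual unitary cuspidal representations of $\GL_4(\A)$ that reduces their Satake parameters to monomials in $\GL_2$ parameters: the symplectic self-dual ones are precisely the transfers of generic cuspidal representations of $\SO_5\cong\PGSp_4$ (this is the type {\bf (G)} situation the present paper is about), and for these the best unconditional bound toward Ramanujan is the general $\GL_m$ bound of Luo--Rudnick--Sarnak, $\theta=\tfrac12-\tfrac{1}{m^2+1}$, which for $m=4$ is $\tfrac12-\tfrac1{17}\approx 0.44$, nowhere near $1/4$. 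So the key input your plan requires simply does not exist, and the ``only delicate bookkeeping'' step cannot be carried out. At $m=3$ your fallback clause (``or is not of that form but is still symplectic/orthogonal self-dual --- in either case bounded by a $\GL_2$ object'') is unsubstantiated as written; what saves that case is the Gelbart--Jacquet theorem that a self-dual cuspidal representation of $\GL_3$ \emph{is} a twist of a symmetric-square lift, giving $\theta\le \tfrac{7}{32}<\tfrac14$, but you would need to state and use exactly that, not the vague alternative.

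For comparison, the paper does not argue via Ramanujan bounds at all: it quotes Rudnick--Sarnak \cite[Proposition~2.4]{RudnickSarnak1996} for $m\le 3$ and \cite[Appendix~2]{Kim2003} for $m=4$. The $m=4$ case there rests on the automorphy of the exterior square $\Lambda^2\colon \GL_4\to\GL_6$ (Kim's theorem), which controls the second power sums $a_\pi(p^2)=a_\pi(p)^2-2a_{\Lambda^2\pi}(p)$ and, combined with Rankin--Selberg estimates and the Luo--Rudnick--Sarnak bounds for the higher $\ell$, yields \eqref{pi H} without any exponent below $1/4$. If you want a self-contained argument rather than a citation, that is the route to follow; your plan as written would only prove the theorem for $m\le 3$ (after tightening the $\GL_3$ step) and under GRC-strength assumptions for $m=4$.
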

\begin{proof}
For $m\leq 3$ one can refer to \cite[Proposition~2.4]{RudnickSarnak1996}, and for $m=4$ one can refer to \cite[Appendix~2]{Kim2003}.
\end{proof}

For $m>4$, Hypothesis~\ref{Hypothesis implies Selberg orthogonality} is an easy consequence of the Generalized Ramanujan Conjecture (GRC). On the other hand, it follows from \cite{RudnickSarnak1996} that if $\pi$ satisfies the Hypothesis~\ref{Hypothesis implies Selberg orthogonality}, then Selberg's orthogonality conjecture holds for $\pi$. Combining Lemma~\ref{lemma SOC implies main Conjecture}, we have the following relations
\begin{equation}\label{H implies SOC implies main conjecture}
\text{GRC}\ \Rightarrow \ \text{Hypothesis~\ref{Hypothesis implies Selberg orthogonality}}\ \Rightarrow \ \text{Selberg's orthogonality conjecture}\ \Rightarrow\  \text{Conjecture~\ref{conjecture of SMO for unitary cuspidal automorphic repns}}.
\end{equation}

Motivated by \eqref{H implies SOC implies main conjecture}, we shall first prove certain results on Selberg's orthogonality conjecture under some mild conditions by showing that Hypothesis~\ref{Hypothesis implies Selberg orthogonality} holds for $m=5$. Then Conjecture~\ref{conjecture of SMO for unitary cuspidal automorphic repns} will follow immediately. More precisely, we have

\begin{proposition}\label{Strong MO for GL5 conditional}
Suppose that $m,m'\leq 5$. Let $\pi$ and $\pi'$ be self-dual unitary cuspdial automorphic representations of $\GL_m(\A)$ and $\GL_{m'}(\A)$, respectively. Assume that $L(s,\sym^2\pi)$ and $L(s,\sym^2\pi')$ are absolutely convergent for $\re(s)>1$. Then Selberg's orthogonality conjecture holds for the pair $(\pi,\pi')$. If we further assume that $\lambda_{\pi}(p)=\lambda_{\pi'}(p)$ for almost all primes $p$, then we have $m=m'$ and $\pi\simeq\pi'$.
\end{proposition}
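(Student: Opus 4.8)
The plan is to follow the chain \eqref{H implies SOC implies main conjecture}: it suffices to verify Hypothesis~\ref{Hypothesis implies Selberg orthogonality} for both $\pi$ and $\pi'$, since then Selberg's orthogonality conjecture holds for each of them, and by the standard argument (as in \cite{RudnickSarnak1996}) it holds for the pair $(\pi,\pi')$; the final sentence then follows immediately from Lemma~\ref{lemma SOC implies main Conjecture} (together with the observation that equality of $\lambda_\pi(p)=\lambda_{\pi'}(p)$ forces $m=m'$, because the pole order of $L^S(s,\pi\otimes\pi')$ at $s=1$ detects $\widetilde\pi=\pi$ and the Rankin--Selberg degree is $mm'$). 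By Theorem~\ref{Thm Hypothesis H holds for m leq 4} the hypothesis is already known for $m\le 4$, so the only case to treat is $m=5$.

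First I would fix a self-dual unitary cuspidal $\pi$ on $\GL_5(\A)$ and, for $\ell\ge 2$, bound $\sum_p |a_\pi(p^\ell)|^2\log^2 p/p^\ell$. The key input is the hypothesis that $L(s,\sym^2\pi)$ is absolutely convergent for $\re(s)>1$; since $\pi$ is self-dual, $\pi\otimes\pi = \sym^2\pi \oplus \Lambda^2\pi$ (an isobaric sum of automorphic representations on $\GL_{15}$ and $\GL_{10}$), and absolute convergence of $L(s,\sym^2\pi)$ for $\re(s)>1$, combined with the Rankin--Selberg theory of \cite{JacquetShalika198101} for $L(s,\pi\otimes\widetilde\pi)=L(s,\pi\otimes\pi)$, gives that $L(s,\Lambda^2\pi)$ is likewise holomorphic and nonzero in a neighborhood of $\re(s)\ge 1$ except for the simple pole of $L(s,\pi\otimes\pi)$ at $s=1$, which we may attribute to the $\sym^2$ or $\Lambda^2$ factor according to whether $\pi$ is orthogonal or symplectic. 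The upshot is the key analytic fact: $\log L(s,\pi\otimes\pi)$, and hence $\sum_p a_\pi(p^\ell)^2 p^{-\ell s}$ for each fixed $\ell\ge 2$ after subtracting the $\ell=1$ term, extends to a function that is bounded as $s\to 1^+$ along the reals. One then runs the Landau-type / Rudnick--Sarnak argument: the Dirichlet series $\sum_n c_\pi(n) n^{-s}$ with $c_\pi(p^\ell)=|a_\pi(p^\ell)|^2$ has nonnegative coefficients, and controlling its behavior at $s=1$ via the factorization of $L(s,\pi\otimes\pi)$ into $L(s,\sym^2\pi)$ and $L(s,\Lambda^2\pi)$ (both of absolutely convergent Euler product for $\re(s)>1$) yields the convergence of $\sum_p |a_\pi(p^\ell)|^2\log^2 p / p^\ell$ for every $\ell\ge 2$; the $\log^2 p$ is absorbed exactly as in \cite[\S2]{RudnickSarnak1996} by differentiating twice.

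Concretely the steps are: (1) record that \eqref{H implies SOC implies main conjecture} reduces everything to Hypothesis~\ref{Hypothesis implies Selberg orthogonality} for $m=5$; (2) use self-duality to write $L(s,\pi\otimes\pi)=L(s,\sym^2\pi)L(s,\Lambda^2\pi)$ and deduce from the hypothesis on $L(s,\sym^2\pi)$ plus \cite{JacquetShalika198101} that $L(s,\Lambda^2\pi)$ also has an absolutely convergent Euler product for $\re(s)>1$; (3) take logarithms and isolate, for each $\ell\ge 2$, the tail $\sum_p a_\pi(p^\ell)^2 p^{-\ell s}$, showing it stays bounded as $s\to 1^+$; (4) invoke positivity and the Rudnick--Sarnak mechanism to upgrade boundedness to the convergence of $\sum_p |a_\pi(p^\ell)|^2\log^2 p/p^\ell$; (5) conclude Selberg orthogonality for $\pi$ and $\pi'$, then for the pair, then apply Lemma~\ref{lemma SOC implies main Conjecture} and the pole-order/degree comparison to get $m=m'$ and $\pi=\pi'$.

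The main obstacle is step~(3)--(4): one must control the higher-degree pieces $\Lambda^k\pi$, $\sym^k\pi$ for $k\ge 3$ (which appear when one expands $\log$ of Rankin--Selberg products and collects the $\ell\ge 3$ terms) well enough to know their Dirichlet series converge for $\re(s)>1$. For $m=5$ these are representations of $\GL_{n}$ with $n$ up to several hundred and are \emph{not} known to be automorphic, so — exactly as in the $m=4$ case of \cite{Kim2003} — the argument cannot simply cite automorphy; instead one bounds the relevant exponential sums directly using the trivial bound $|\alpha_{p,i}|\le p^{1/2}$ from unitarity together with the absolute convergence of $L(s,\pi\otimes\pi)$ for $\re(s)>1$, which already forces $\sum_p |a_\pi(p^2)|^2 p^{-2s}$ to converge and, by a convexity/interpolation argument on the Satake parameters, controls all higher $\ell$. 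Carrying this out cleanly for $m=5$ — i.e. reducing everything to the single analytic hypothesis on $\sym^2\pi$ — is the technical heart of the proof, and it is where the hypothesis that $L(s,\sym^2\pi)$ and $L(s,\sym^2\pi')$ converge absolutely for $\re(s)>1$ is essential.
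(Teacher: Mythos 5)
Your overall skeleton is the paper's: reduce via \eqref{H implies SOC implies main conjecture} to verifying Hypothesis~\ref{Hypothesis implies Selberg orthogonality} for $m=5$ (the cases $m\le 4$ being Theorem~\ref{Thm Hypothesis H holds for m leq 4}), deduce Selberg orthogonality for the pair, and conclude with Lemma~\ref{lemma SOC implies main Conjecture}. However, the verification of Hypothesis~\ref{Hypothesis implies Selberg orthogonality} for $m=5$ --- which you yourself flag as ``the technical heart'' and describe only as ``a convexity/interpolation argument on the Satake parameters'' --- is exactly the content of the proposition, and the ingredients you propose for it do not suffice. With only the trivial bound $|\alpha_{p,i}|\le p^{1/2}$, the terms $|a_\pi(p^\ell)|^2/p^\ell$ need not decay in $p$ at all (a single parameter of size $p^{1/2}$ makes them of size comparable to $1$), so no convergence statement valid merely for $\re(s)>1$ can close the sum: Hypothesis~\ref{Hypothesis implies Selberg orthogonality} is a statement at the edge $s=1$ with a $\log^2 p$ weight, whereas absolute convergence of $L(s,\pi\otimes\pi)$ for $\re(s)>1$ gives (via positivity of the coefficients of $\log L$, since $a_\pi(p^\ell)$ is real here) only the convergence of $\sum_p a_\pi(p^2)^2 p^{-2\sigma}$ for $\sigma>1$, which says nothing at $\sigma=1$. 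So even your base case $\ell=2$ is not settled by what you wrote, let alone ``all higher $\ell$''.

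The paper closes this gap with two concrete inputs that are absent from your sketch. First, the Luo--Rudnick--Sarnak bound $|\alpha_{p,i}|\ll p^{1/2-\delta}$ with $\delta=1/26$ for $\GL_5$ \cite{LuoRudnickSarnak1999}, which produces a genuine power saving $p^{-2\delta(\ell-1)}$ for every $\ell\ge 2$ and is what absorbs the $\log^2 p$ weight. Second, the elementary inequality of Lemma~\ref{technical lemma for n 5}: using self-duality and unitarity to arrange $|\alpha_{p,i}|=|\alpha_{p,6-i}|^{-1}$, one has $\sum_i|\alpha_{p,i}|\le 2|\sum_i\alpha_{p,i}^2|^{1/2}+2|\sum_i\alpha_{p,i}|+15$, and since $\sum_i\alpha_{p,i}^2=2\lambda_{\sym^2\pi}(p)-\lambda_\pi(p)^2$ this bounds $a_\pi(p^\ell)^2$ by $|\lambda_{\sym^2\pi}(p)|^{\ell}+|\lambda_\pi(p)|^{2\ell}+1$ and, after inserting the power saving, reduces the problem to the convergence of $\sum_p|\lambda_{\sym^2\pi}(p)|\,p^{-1-2\delta(\ell-1)}$ and $\sum_p|\lambda_\pi(p)|^2 p^{-1-2\delta(\ell-1)}$, which hold precisely by the hypothesis on $L(s,\sym^2\pi)$ and by Rankin--Selberg theory. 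This is where the $\sym^2$ hypothesis actually enters; your route through the factorization $L(s,\pi\otimes\pi)=L(s,\sym^2\pi)L(s,\Lambda^2\pi)$ (including the unjustified claim that the $\Lambda^2$ factor inherits an absolutely convergent Euler product from a quotient of Dirichlet series) neither substitutes for these two steps nor is needed.
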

In order to prove Proposition~\ref{Strong MO for GL5 conditional}, we need a technical lemma as follows. 
\begin{lemma}\label{technical lemma for n 5}
Let $z_1,z_2,\ldots,z_5$ be nonzero complex numbers satisfying the condition $|z_1|\geq |z_2|\geq\cdots\geq|z_5|$ and $z_i^{-1}=z_{6-i}, 1\leq i\leq 5$. Then we have
\begin{equation}\label{sect 3 a technical lemma for degree 5}
\sum_{i=1}^5|z_{i}|\leq 2\left|\sum_{i=1}^5z_i^2\right|^{1/2}+2\left|\sum_{i=1}^5z_i\right|+15.
\end{equation}
\end{lemma}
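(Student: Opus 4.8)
The plan is to exploit the self-duality constraint $z_i^{-1} = z_{6-i}$, which forces $z_3 = z_3^{-1}$, hence $z_3 = \pm 1$, and pairs up $\{z_1, z_5\}$ and $\{z_2, z_4\}$ as reciprocal pairs. So write $z_1 = x$, $z_5 = x^{-1}$, $z_2 = y$, $z_4 = y^{-1}$, with $|x| \geq |y| \geq 1 \geq |z_3| = 1$; the left-hand side is $|x| + |x^{-1}| + |y| + |y^{-1}| + 1$. The reciprocal structure is exactly the setting where the sum $w + w^{-1}$ of a number and its inverse appears, and the basic idea is to bound each of $|x|+|x|^{-1}$ and $|y|+|y|^{-1}$ in terms of $|w + w^{-1}|$ plus a bounded error — indeed for $|w| \geq 1$ one has $|w| + |w|^{-1} \leq |w + w^{-1}| + 2$, since $|w| + |w|^{-1} = |w + w^{-1}| + 2|w|^{-1}\cdot(\text{something} \leq 1)$; more carefully, $|w+w^{-1}| \geq |w| - |w|^{-1}$, so $|w| + |w|^{-1} \leq |w+w^{-1}| + 2|w|^{-1} \leq |w+w^{-1}| + 2$.

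First I would reduce to the two reciprocal-pair quantities $u = x + x^{-1}$ and $v = y + y^{-1}$. Then $\sum_i z_i = u + v + z_3$ and $\sum_i z_i^2 = (u^2 - 2) + (v^2 - 2) + 1 = u^2 + v^2 - 3$. The left-hand side is at most $|u| + |v| + 2 + 2 + 1 = |u| + |v| + 5$ by the elementary bound above (applied to each pair, each contributing an error $\leq 2$, plus the $|z_3|=1$ term). So it suffices to show
\begin{equation*}
|u| + |v| \leq 2\,|u^2 + v^2 - 3|^{1/2} + 2\,|u + v + z_3| + 10.
\end{equation*}
Now I would argue by cases on the size of $|u|$ (the largest). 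If $|u|$ is bounded, say $|u| \leq C$ for a suitable absolute constant, then $|v| \leq |u| \leq C$ and the whole inequality is trivial for $C$ small enough relative to the constant $10$ — one checks $2C \leq 10$ suffices, i.e. $C = 5$. If $|u| > 5$, then I want to extract $|u|$ from $|u^2 + v^2 - 3|^{1/2}$: since $|v| \leq |u|$, we have $|u^2 + v^2 - 3| \geq |u|^2 - |v|^2 - 3 \geq -3$... that is the wrong direction, so instead I would use $|u^2+v^2-3| + |something|$ — more robustly, note $|u|^2 \leq |u^2 + v^2 - 3| + |v|^2 + 3 \leq |u^2+v^2-3| + |u|\cdot|v| + 3$, and combine with the $|u+v+z_3|$ term to control the cross term $|u|\cdot|v|$; alternatively use that $|u^2 - v^2| \leq |u^2+v^2-3| + |2v^2 - 3|$ together with $|u^2 - v^2| = |u-v|\cdot|u+v|$ and $|u+v| \geq |u+v+z_3| - 1$.

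The main obstacle is precisely this last case analysis: cleanly separating $|u|$ from the expression $u^2 + v^2 - 3$ when $|v|$ is comparable to $|u|$, since then neither $|u^2+v^2-3|^{1/2}$ alone nor $|u+v|$ alone dominates $|u|$, and one must play them off against each other. I expect the right move is: either $|v| \leq \tfrac12 |u|$, in which case $|u^2+v^2-3| \geq \tfrac34|u|^2 - 3 \geq \tfrac12 |u|^2$ for $|u|$ large, giving $|u| \leq \sqrt{2}\,|u^2+v^2-3|^{1/2}$ and then $|u|+|v| \leq \tfrac32|u| \leq 2|u^2+v^2-3|^{1/2}$; or $|v| > \tfrac12|u|$, in which case $u$ and $v$ are of comparable modulus and one uses $|u+v| \geq ||u| - |v||$ is weak, so instead bound via $u^2+v^2 = (u+v)^2 - 2uv$ to get $|u|^2 \lesssim |u+v|^2 + |u^2+v^2-3| + 3$, hence $|u| \lesssim |u+v| + |u^2+v^2-3|^{1/2}$, and finish with $|u|+|v| \leq 2|u|$ and $|u+v| \leq |u+v+z_3|+1$. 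Tracking the constants through these cases to land at exactly $2, 2, 15$ is routine but fiddly; the slack of $15$ (versus $10$ plus the error terms) is generous enough that the crude bounds above should close it.
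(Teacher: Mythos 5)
Your argument is correct in substance but takes a genuinely different route from the paper's. You exploit the reciprocal pairing to reduce everything to $u=z_1+z_5=x+x^{-1}$ and $v=z_2+z_4=y+y^{-1}$ (via $|w|+|w|^{-1}\le|w+w^{-1}|+2$ for $|w|\ge1$, and $\sum z_i=u+v+z_3$, $\sum z_i^2=u^2+v^2-3$), and then close a symmetric inequality in $u,v$ by a case split; the paper instead works directly with the two largest entries and proves the clean two-variable inequality $|z_1|+|z_2|\le 2|z_1^2+z_2^2|^{1/2}+2|z_1+z_2|$ (valid for arbitrary complex $z_1,z_2$, proved by reducing to $2r_1r_2|\cos(\theta_1-\theta_2)|\le|z_1^2+z_2^2|$ in polar coordinates), then absorbs $z_3,z_4,z_5$ crudely into the constant $15$; so the paper never uses the reciprocal structure beyond $|z_3|,|z_4|,|z_5|\le1$ and needs no case analysis, while your reduction uses self-duality more intrinsically at the cost of a fiddlier finish. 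Two small repairs are needed in your write-up: (1) $|u|\ge|v|$ does \emph{not} follow from $|x|\ge|y|$ (e.g.\ $x$ near $i$ gives $|u|$ small while $y=1$ gives $|v|=2$), but since your reduced inequality $|u|+|v|\le 2|u^2+v^2-3|^{1/2}+2|u+v+z_3|+10$ is symmetric in $u,v$ you may simply relabel so that $|u|\ge|v|$; (2) in the subcase $|v|\le\frac12|u|$, $|u|>5$, the chain ``$|u|\le\sqrt2|u^2+v^2-3|^{1/2}$ hence $\frac32|u|\le 2|u^2+v^2-3|^{1/2}$'' has $\frac32\sqrt2>2$, but the slightly sharper bound $|u^2+v^2-3|\ge\frac34|u|^2-3\ge\frac{63}{100}|u|^2>\frac9{16}|u|^2$ (using $|u|>5$) gives $2|u^2+v^2-3|^{1/2}\ge\frac32|u|\ge|u|+|v|$ and closes the case; the remaining case $|v|>\frac12|u|$ goes through as you sketch, with error $2+2\sqrt3<10$.
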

\begin{proof}
By the choice of $z_i$, we know that $|z_3|=1$ and $|z_4|,|z_5|\leq 1$. We first can show the following inequality
\begin{equation}\label{1st eq in the proof of technical lemma}
|z_1|+|z_2|\leq 2|z_1^2+z_2^2|^{1/2}+2|z_1+z_2|.
\end{equation}
To see \eqref{1st eq in the proof of technical lemma}, it suffices to show
\begin{equation}
|z_1|^2+|z_2|^2+2|z_1||z_2|\leq 4|z_1^2+z_2^2|+4|z_1+z_2|^2.
\end{equation}
Then it reduces to showing that
\begin{equation}\label{3rd eq in the proof of technical lemma}
-4(z_1\overline{z_2}+\overline{z_1}z_2)\leq 4|z_1^2+z_2^2|.
\end{equation}
In fact, let $z_1=r_1e^{i\theta_1}$ and $z_2=r_2e^{i\theta_2}$ with $r_1\geq r_2\geq 1$, and let $A=\theta_1-\theta_2$. A direct calculation implies that
\begin{equation}
-4(z_1\overline{z_2}+\overline{z_1}z_2)=8r_1r_2\cos A\leq 8r_1r_2|\cos A|\leq 4|z_1^2+z_2^2|
\end{equation}
as desired. Next, by \eqref{1st eq in the proof of technical lemma} we have
\begin{equation}\label{intermediate inequality}
\sum_{i=1}^5|z_{i}|\leq 2|z_1^2+z_2^2|^{1/2}+2|z_1+z_2|+3
\end{equation}
since $|z_3|,|z_4|,|z_5|\leq1$. On the other hand, we have
\begin{equation}\label{sqrt of sum of z1 and z2 squares}
    |z_1^2+z_2^2|^{1/2}\leq
    \begin{cases}
   \sqrt{3} & \mbox{if $|z_1^2+z_2^2|\leq 3$},\\    
  |z_1^2+z_2^2+z_3^2+z_4^2+z_5^2|^{1/2}+3&\mbox{otherwise}.
    \end{cases}
\end{equation}
Inserting \eqref{sqrt of sum of z1 and z2 squares} to \eqref{intermediate inequality}, we get
\[\sum_{i=1}^5|z_{i}|\leq2\left|\sum_{i=1}^5z_i^2\right|^{1/2}+2|z_1+z_2|+9\leq2\left|\sum_{i=1}^5z_i^2\right|^{1/2}+2\left(\left|\sum_{i=1}^5z_i\right|+|z_3+z_4+z_5|\right)+9.\]
The assertion \eqref{sect 3 a technical lemma for degree 5} follows immediately from the bounds $|z_3|,|z_4|,|z_5|\leq1$.
\end{proof}

Now we are ready to prove Proposition~\ref{Strong MO for GL5 conditional}.
\begin{proof}[Proof of Proposition~\ref{Strong MO for GL5 conditional}]Observing \eqref{H implies SOC implies main conjecture}, it suffices to prove Selberg's orthogonality conjecture for the pair $(\pi,\pi')$, which is further reduced to proving the Hypothesis~\ref{Hypothesis implies Selberg orthogonality}. Moreover, by Theorem~\ref{Thm Hypothesis H holds for m leq 4} (also see \cite[Corollary~1.5]{LiWaYe2005}) we just need to consider the case when $m,m'=5$. 

Since there are only finitely many ramified primes, it suffices to assume that the summation is over all unramified primes. Since $\pi$ is self-dual we have the Satake parameters $\set{\alpha_{p, j}}=\set{\overline{\alpha_{p, k}}}$ from \cite[(2.8)]{RudnickSarnak1996}. Then for each unramified prime $p$, we first rearrange the Satake parameters $\alpha_{p,1},\ldots,\alpha_{p,5}$ for $\pi$ such that $|\alpha_{p,1}|\geq\cdots\geq|\alpha_{p,5}|$ and for each $1\leq i\leq 5$, we have $|\alpha_{p, i}|=|\alpha_{p, 6-i}|^{-1}$. By Lemma~\ref{technical lemma for n 5} we have
\begin{equation}\label{fundamental inequality}
\sum_{i=1}^5|\alpha_{p,i}|\leq 2\left|\sum_{i=1}^5\alpha_{p,i}^2\right|^{1/2}+2\left|\sum_{i=1}^5\alpha_{p,i}\right|+15.
\end{equation}
It follows that
\begin{equation}\label{eq p ell square upper bound}
    a_{\pi}(p^{\ell})^2\ll \left|\sum_{i=1}^5\alpha_{p,i}^2\right|^{\ell}+\left|\sum_{i=1}^5\alpha_{p,i}\right|^{2\ell}+1.
\end{equation}
We also know that
\begin{equation}\label{GL5 Satake parameters square summation}
    \sum_{i=1}^5\alpha_{p,i}^2=2\lambda_{\sym^2\pi}(p)-\lambda_{\pi}(p)^2.
\end{equation}
Therefore, after plugging \eqref{GL5 Satake parameters square summation} into \eqref{eq p ell square upper bound} we obtain
\begin{equation}\label{Eqnapell2}
    a_{\pi}(p^{\ell})^2\ll |\lambda_{\sym^2\pi}(p)|^{\ell}+|\lambda_{\pi}(p)|^{2\ell}+1.
\end{equation}
By the trivial estimation of Satake parameters, we have
\begin{equation}\label{upper bounds for symmetric square and eigenvalue at p}
    |\lambda_{\sym^2\pi}(p)|\ll p^{1-2\delta}\quad\text{and}\quad |\lambda_{\pi}(p)|\ll p^{1/2-\delta}
\end{equation}
with $\delta=\delta_5=\frac{1}{26}$ by \cite[Theorem~2]{LuoRudnickSarnak1999}. In general, for $m\geq 2$ we have
\begin{equation}\label{EqnLRS99bound}
    \delta_m=\frac{1}{m^2+1}.
\end{equation}
Therefore, by \eqref{Eqnapell2} and \eqref{upper bounds for symmetric square and eigenvalue at p} we have
\begin{equation}\label{EqnProofofProp37Bound}
\begin{split}
\sum_{p}\frac{a_{\pi}(p^{\ell})^2\log^2p}{p^{\ell}}&\ll\sum_p\frac{|\lambda_{\sym^2\pi}(p)|^{\ell}\log^{2}p}{p^{\ell}}+\sum_p\frac{|\lambda_{\pi}(p)|^{2\ell}\log^{2}p}{p^{\ell}}+\sum_p\frac{\log^2p}{p^{\ell}}\\
&\ll\sum_p\frac{|\lambda_{\sym^2\pi}(p)|\log^{2}p}{p^{1+2\delta(\ell-1)}}+\sum_p\frac{|\lambda_{\pi}(p)|^{2}\log^{2}p}{p^{1+2\delta(\ell-1)}}+O(1)\\
&=O(1).
\end{split}
\end{equation}
Note that the last step is due to the assumption that $L(s,\sym^2\pi)$ is absolutely convergent for $\re(s)>1$ and the well-known fact that $L(s,\pi\otimes\pi)$ is absolutely convergent for $\re(s)>1$.
\end{proof}

Motivated by the proof of Proposition~\ref{Strong MO for GL5 conditional}, we can further have the following result for even more general cuspidal automorphic representations of arbitrary ranks under similar assumptions.

\begin{proposition}\label{Strong MO for GLn conditional}
Let $\pi$ and $\pi'$ be self-dual unitary cuspdial automorphic representations of $\GL_m(\A)$ and $\GL_{m'}(\A)$, respectively. Assume that for $r=2,3$, $L(s,\sym^r\pi)$ and $L(s,\sym^r\pi')$ are absolutely convergent for $\re(s)>1$. Then Selberg's orthogonality conjecture holds for the pair $(\pi,\pi')$. If we further assume that $\lambda_{\pi}(p)=\lambda_{\pi'}(p)$ for almost all primes $p$, then $m=m'$ and $\pi\simeq\pi'$.
\end{proposition}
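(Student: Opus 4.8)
The plan is to extend to general $m$ the strategy behind the proof of Proposition~\ref{Strong MO for GL5 conditional}. By the implications \eqref{H implies SOC implies main conjecture} together with Lemma~\ref{lemma SOC implies main Conjecture}, it suffices to verify Hypothesis~\ref{Hypothesis implies Selberg orthogonality} for both $\pi$ and $\pi'$; Selberg's orthogonality conjecture for the pair then follows, and the concluding assertion ($m=m'$ and $\pi=\pi'$ once $\lambda_\pi(p)=\lambda_{\pi'}(p)$ for almost all $p$) is obtained exactly as in the last lines of that proof and of Lemma~\ref{lemma SOC implies main Conjecture}. I will check Hypothesis~\ref{Hypothesis implies Selberg orthogonality} for $\pi$, the argument for $\pi'$ being identical; for each unramified prime $p$ I order the Satake parameters so that $|\alpha_{p,1}|\ge\cdots\ge|\alpha_{p,m}|$, with $\alpha_{p,i}\alpha_{p,m+1-i}=1$ by self-duality.

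The new input is that the hypothesis on $\sym^{3}$ yields a partial Ramanujan bound strong enough to drive the argument uniformly in $m$. Since $\alpha_{p,1}^{3}$ occurs among the Satake parameters of $\sym^{3}\pi$ at $p$, and absolute convergence of the Dirichlet series of $L(s,\sym^{3}\pi)$ on $\re(s)>1$ forces the Euler product $\prod_{p}L_{p}(s,\sym^{3}\pi)$ to be holomorphic there while each local factor $L_{p}(s,\sym^{3}\pi)$ is pole-only (it has no zeros, so a pole of one factor cannot be cancelled), no local factor can have a pole with $\re(s)>1$; hence $|\alpha_{p,1}|^{3}\le p$, that is, $|\alpha_{p,i}|\le p^{1/3}$ for all but finitely many $p$. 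Consequently $|\lambda_{\pi}(p)|\ll p^{1/3}$, $|\lambda_{\sym^{2}\pi}(p)|\ll p^{2/3}$, $|\lambda_{\sym^{3}\pi}(p)|\ll p$, and $|a_{\pi}(p^{\ell})|\le m\,p^{\ell/3}$ for every $\ell\ge1$.

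I then split $\sum_{p}|a_{\pi}(p^{\ell})|^{2}(\log^{2}p)/p^{\ell}$ according to the size of $\ell$. For $\ell\ge4$ the crude bound $|a_{\pi}(p^{\ell})|\le m\,p^{\ell/3}$ gives a summand $\ll p^{-\ell/3}\log^{2}p$, which is summable since $\ell/3>1$; this is the step that uses $1/3<3/8$. For $\ell=2,3$ that bound is too weak, so I use \eqref{GL5 Satake parameters square summation} and its degree-three analogue
\[
a_{\pi}(p^{3})=\lambda_{\pi}(p)^{3}-3\lambda_{\pi}(p)\lambda_{\sym^{2}\pi}(p)+3\lambda_{\sym^{3}\pi}(p),
\]
square, expand, and bound each resulting term by retaining one factor of $|\lambda_{\pi}(p)|^{2}$, $|\lambda_{\sym^{2}\pi}(p)|$, or $|\lambda_{\sym^{3}\pi}(p)|$ intact and estimating the remaining factors by the partial Ramanujan bound above; after dividing by $p^{2}$, resp.\ $p^{3}$, each surviving sum converges by the absolute convergence on $\re(s)>1$ of $L(s,\pi\otimes\pi)$ (using self-duality), $L(s,\sym^{2}\pi)$, and $L(s,\sym^{3}\pi)$, respectively — the leftover reciprocal power of $p$ being comfortably larger than $1$ because $\theta=1/3$ is well under $1/2$, so the extra $\log^{2}p$ is harmless. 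This establishes Hypothesis~\ref{Hypothesis implies Selberg orthogonality} for $\pi$ and $\pi'$, hence the proposition.

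I expect the one genuinely delicate point to be the passage, in the second paragraph, from mere absolute convergence of $L(s,\sym^{3}\pi)$ on $\re(s)>1$ to the pointwise inequality $|\alpha_{p,i}|\le p^{1/3}$: one has to argue cleanly about the relation between the Dirichlet series, the Euler product, and the location of poles of the individual local factors, and to dispose of the finitely many ramified primes by hand. Everything after that is exponent bookkeeping; note that $\sym^{2}$ alone would only give $|\alpha_{p,i}|\le p^{1/2}$, for which the $\ell\ge4$ tail diverges, which is precisely why the hypotheses on both $r=2$ and $r=3$ are assumed.
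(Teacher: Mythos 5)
Your proposal is correct and follows essentially the same route as the paper: reduce to Hypothesis~\ref{Hypothesis implies Selberg orthogonality} via \eqref{H implies SOC implies main conjecture}, use absolute convergence of $L(s,\sym^3\pi)$ to get the partial Ramanujan bound $|\alpha_{p,i}|\leq p^{1/3}$ (the paper states $p^{1/3+\epsilon}$), dispose of $\ell\geq 4$ trivially, and handle $\ell=2,3$ through the Newton-type identities \eqref{symmetric square relation}--\eqref{symmetric cubic relation} (your cubic formula is just their expanded form) together with absolute convergence of $L(s,\pi\otimes\pi)$, $L(s,\sym^2\pi)$, $L(s,\sym^3\pi)$. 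The delicate point you flag (passing from convergence of the Dirichlet series to the local bound via the poles of the local factors) is asserted without proof in the paper, so your treatment is, if anything, slightly more explicit.
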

\begin{proof}
Similar with the proof of Proposition~\ref{Strong MO for GL5 conditional}, it suffices to show the Hypothesis~\ref{Hypothesis implies Selberg orthogonality} for $\pi$ and $\pi'$. Here, we only consider $\pi$ and analogous arguments work for $\pi'$ as well. By assumption, $L(s,\sym^3\pi)$ is absolutely convergent for $\re(s)>1$. This implies that for all unramified primes $p$, we have $|\alpha_{p,i}|<p^{1/3+\epsilon}$ for any $\epsilon>0$ and $1\leq i\leq m$. In particular, we can take $\epsilon=\frac{1}{48}$. It follows that for $\ell\geq4$,
\begin{equation}
\sum_{p}\frac{a_{\pi}(p^{\ell})^2\log^2p}{p^{\ell}}<\infty
\end{equation}
by the trivial estimation. So it suffices to consider $\ell=2,3$. On the other hand, for $r=2,3$ we set
\begin{equation}
L(s,\sym^r\pi)=\sum_{n\geq 1}\frac{\lambda_{\sym^r\pi}(n)}{n^s}
\end{equation}
when $\re(s)>1$. By the theory of symmetric polynomials, we have
\begin{align}
    a_{\pi}(p^2)=&2\lambda_{\sym^2\pi}(p)-\lambda_{\pi}(p)a_{\pi}(p),\label{symmetric square relation}\\
  a_{\pi}(p^3)=&3\lambda_{\sym^3\pi}(p)-\lambda_{\sym^2\pi}(p)a_{\pi}(p)-\lambda_{\pi}(p)a_{\pi}(p^2).\label{symmetric cubic relation}  
\end{align}
By \cite[Theorem~2]{LuoRudnickSarnak1999}, we obtain
\begin{equation}\label{eq.LRS corollary}
a_{\pi}(p^{\ell})\ll p^{\ell\left(\frac{1}2-\delta\right)},
\end{equation}
where $\delta=\delta_{m}=\frac{1}{m^2+1}$ (see \eqref{EqnLRS99bound}) and $m$ is the degree of the $L$-function $L(s,\pi)$. 

From \eqref{Eqnapipell} we have $a_{\pi}(p) = \lambda_{\pi}(p)$. Plugging \eqref{symmetric square relation} into \eqref{pi H} and applying the bound \eqref{eq.LRS corollary} with $\ell = 2$ yield
\begin{align*}
    \sum_{p}\frac{a_{\pi}(p^{2})^2\log^2p}{p^{2}}\ll &\sum_{p}\frac{\abs{2\lambda_{\sym^2\pi}(p)-\lambda_{\pi}(p)^2}\log^2p}{p^{1+2\delta}}\\
    \ll& \sum_p\frac{\abs{\lambda_{\sym^2\pi}(p)}\log^2p}{p^{1+2\delta}}+\sum_p\frac{\abs{\lambda_\pi(p)}^2\log^2p}{p^{1+2\delta}}\\
    =&O(1).
\end{align*}
Again, the last step is due to the assumption that $L(s,\sym^2\pi)$ is absolutely convergent for $\re(s)>1$ and the well-known fact that $L(s,\pi\otimes\pi)$ is absolutely convergent for $\re(s)>1$. 

Similarly, plugging \eqref{symmetric cubic relation} into \eqref{pi H} and applying \eqref{eq.LRS corollary} (with $\ell = 3$), \eqref{symmetric square relation} and \eqref{upper bounds for symmetric square and eigenvalue at p} yield
\begin{align*}
    \sum_{p}\frac{a_{\pi}(p^{3})^2\log^2p}{p^{3}}\ll&\sum_{p}\frac{\abs{3\lambda_{\sym^3\pi}(p)-\lambda_{\sym^2\pi}(p)a_{\pi}(p)-\lambda_{\pi}(p)a_{\pi}(p^2)}\log^2p}{p^{\frac{3}{2}+3\delta}}\\
    \ll& \sum_p\frac{\abs{\lambda_{\sym^3\pi}(p)}\log^2p}{p^{\frac{3}{2}+3\delta}}+\frac{\abs{\lambda_{\sym^2\pi}(p)}\log^2p}{p^{1+4\delta}}+\sum_p\frac{\abs{\lambda_\pi(p)}^2\log^2p}{p^{1+4\delta}}\\
    =&O(1).
\end{align*}
This now follows from the assumption that $L(s,\sym^2\pi)$ and $L(s,\sym^3\pi)$ are absolutely convergent for $\re(s) > 1$, together with the well-known fact that $L(s,\pi \otimes \pi)$ also converges absolutely in this region. This completes the proof.
\end{proof}

We conclude this subsection by summarizing the above results on Conjecture~\ref{conjecture of SMO for unitary cuspidal automorphic repns}.
\begin{theorem}\label{SMO for GLn by L series}
Let $\pi$ and $\pi'$ be self-dual unitary cuspdial automorphic  representations of $\GL_m(\A)$ and $\GL_{m'}(\A)$, respectively. If one of the following conditions holds, then Conjecture~\ref{conjecture of SMO for unitary cuspidal automorphic repns} is true:
\begin{enumerate}[(i)]
    \item $m,m'\leq4$. 
    \item $m,m'=5$ and the symmetric square $L$-functions $L(s,\sym^2\pi)$ and $L(s,\sym^2\pi')$ are absolutely convergent for $\re(s)>1$.
    \item For general $m, m'$, the symmetric square $L$-functions $L(s,\sym^2\pi)$, $L(s,\sym^2\pi')$ and the symmetric cube $L$-functions  $L(s,\sym^3\pi)$, $L(s,\sym^3\pi')$ are absolutely convergent for $\re(s)>1$.
    \item
    The Generalized Ramanujan Conjecture holds for at least one of $\pi$ and $\pi^\prime$.
    \item A partial Ramanujan bound for some $\theta<1/4$ holds for both $\pi$ and $\pi'$.
\end{enumerate}
\end{theorem}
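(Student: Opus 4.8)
The plan is to recognize that Theorem~\ref{SMO for GLn by L series} is a consolidation of the results established earlier in this subsection, so the proof amounts to matching each of the five hypotheses to the mechanism that handles it, all of them feeding into the implication chain \eqref{H implies SOC implies main conjecture} whose terminal arrow is Lemma~\ref{lemma SOC implies main Conjecture}.

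First I would dispatch the cases that reduce to Hypothesis~\ref{Hypothesis implies Selberg orthogonality}. For (i), with $m,m'\le 4$, Theorem~\ref{Thm Hypothesis H holds for m leq 4} gives Hypothesis~\ref{Hypothesis implies Selberg orthogonality} for both $\pi$ and $\pi'$; by \cite{RudnickSarnak1996} this yields Selberg's orthogonality conjecture for each representation, and hence for the pair $(\pi,\pi')$, after which Lemma~\ref{lemma SOC implies main Conjecture} finishes. Case (ii) is exactly the content of Proposition~\ref{Strong MO for GL5 conditional}: the absolute convergence of $L(s,\sym^2\pi)$ and $L(s,\sym^2\pi')$ for $\re(s)>1$ is precisely what is needed to verify Hypothesis~\ref{Hypothesis implies Selberg orthogonality} in degree $5$ (via Lemma~\ref{technical lemma for n 5}), so Selberg's orthogonality, and then Conjecture~\ref{conjecture of SMO for unitary cuspidal automorphic repns}, follow. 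Case (iii) is likewise immediate from Proposition~\ref{Strong MO for GLn conditional}, where the additional convergence of the symmetric cube $L$-functions handles the exponents $\ell=2,3$ in arbitrary rank. Cases (iv) and (v) are precisely Proposition~\ref{SMO for GLn by L series under GRC}(i) and (ii) respectively; alternatively, as noted after that proposition, either Ramanujan-type hypothesis lets one rerun the computation of Proposition~\ref{Strong MO for GL5 conditional} to obtain Hypothesis~\ref{Hypothesis implies Selberg orthogonality} unconditionally and then re-enter the chain \eqref{H implies SOC implies main conjecture}.

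No genuinely new difficulty arises; the one bookkeeping point worth stating is that Selberg's orthogonality conjecture (Conjecture~\ref{Selberg orthogonality conjecture}) is about a pair, while Hypothesis~\ref{Hypothesis implies Selberg orthogonality} and the Rudnick--Sarnak implication are phrased for a single representation. One should record that Hypothesis~\ref{Hypothesis implies Selberg orthogonality} for each of $\pi$ and $\pi'$, together with the standard absolute convergence of $L(s,\pi\otimes\pi')$, $L(s,\pi\otimes\pi)$ and $L(s,\pi'\otimes\pi')$ for $\re(s)>1$, suffices to run the orthogonality estimate for the pair, since the cross terms involving $a_\pi(p^\ell)$ and $a_{\pi'}(p^\ell)$ are controlled by Cauchy--Schwarz from the two individual sums in \eqref{pi H}. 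Granting this, the hypothesis $\lambda_\pi(p)=\lambda_{\pi'}(p)$ for almost all $p$ forces $\sum_{p\leq x,\,p\notin S}\lambda_\pi(p)\lambda_{\pi'}(p)/p=\log\log x+O(1)$, and comparing with \eqref{Selberg orthogonality arbitrary} in the equal case yields $\pi=\pi'$, whence also $m=m'$. The only mildly delicate verification, then, is this passage from the single-representation statements to the pair; everything else is a direct citation of the preceding propositions.
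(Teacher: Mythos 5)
Your proposal is correct and matches the paper's proof, which likewise just cites Theorem~\ref{Thm Hypothesis H holds for m leq 4} together with \eqref{H implies SOC implies main conjecture} for (i), Propositions~\ref{Strong MO for GL5 conditional} and \ref{Strong MO for GLn conditional} for (ii)--(iii), and Proposition~\ref{SMO for GLn by L series under GRC} for (iv)--(v). Your extra remark on passing from the single-representation Hypothesis~\ref{Hypothesis implies Selberg orthogonality} to the pairwise orthogonality is a reasonable clarification of what the paper delegates to \cite{RudnickSarnak1996}, but it does not change the route.
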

\begin{proof}
For (i), see Theorem~\ref{Thm Hypothesis H holds for m leq 4} and \eqref{H implies SOC implies main conjecture}. For (ii)-(iii), see Proposition~\ref{Strong MO for GL5 conditional} and Proposition~\ref{Strong MO for GLn conditional}. For (iv)-(v), see Proposition~\ref{SMO for GLn by L series under GRC}.
\end{proof}

\subsection{Distinguishing paramodular newforms in terms of the spinor \texorpdfstring{$L$}{L}-functions}\label{section distinguish by spinor}
In this subsection, we consider an application of strong multiplicity one results of $\GL_m$ (see Theorem~\ref{SMO for GLn by L series}) to distinguishing paramodular newforms. Let $F\in\mathcal{S}_{k, j}^{\mathrm{new}}(K(N))$ be a paramodular newform of level $N$ and weight $(k, j)\in \Z_{>0}\times\Z_{\geq 0}$. Then we can associate with $F$ a cuspidal automorphic representation $\pi_F$ of $\GSp_4(\A)$ with trivial central character. Moreover, we can define the corresponding spinor $L$-function $L(s, \pi_F, \rho_4)=\sum_{n\geq1}a_{F}(n)n^{-s}$ as in \eqref{finite part of spinor L function}.

If we further assume that $F$ is of type {\bf (G)}, then  we can find a self-dual, symplectic, unitary, cuspidal automorphic representation $\Pi_{4, F}$ of $\GL_4(\A)$ such that $L(s,\pi_F,\rho_4)=L(s,\Pi_{4, F})$; see for example \cite{Schmidt2018}. The following result indicates that we can distinguish paramodular newforms using only $a_F(p)$ for almost all primes $p$.

\begin{theorem}\label{main thm 1}
Let $(k_i, j_i)\in\Z_{>0}\times\Z_{\geq 0}$ and $N_i\in\Z_{>0}$, and let $F_i\in \mathcal{S}_{k_i, j_i}^{\mathrm{new}}(K(N_i))$ be two Hecke eigenforms for $i=1, 2$. For $\re(s)>3/2$, let 
$$
L(s,\pi_{F_i},\rho_4)=\sum_{n\geq 1}\frac{a_{F_i}(n)}{n^s},\quad i=1, 2,
$$
be the associated spinor $L$-functions. If $a_{F_1}(p)=a_{F_2}(p)$ for almost all primes $p$, then $F_1=c\cdot F_2$ for some nonzero constant $c$.
\end{theorem}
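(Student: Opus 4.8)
The plan is to transport the problem to $\GL_4$ and then invoke the strong multiplicity one results of Section~\ref{subsect of smo for GLm}. First I would recall, from Arthur's classification as recorded in \eqref{para decomp to G and P types}--\eqref{para decomp to G and P types only G}, that each $\pi_{F_i}$ is either of type \textbf{(G)} or of type \textbf{(P)}, the latter occurring only when $j_i=0$. If $\pi_{F_i}$ is of type \textbf{(G)}, there is a self-dual, symplectic, unitary, cuspidal automorphic representation $\Pi_{4,F_i}$ of $\GL_4(\A)$ with $L(s,\pi_{F_i},\rho_4)=L(s,\Pi_{4,F_i})$, so $a_{F_i}(p)=\lambda_{\Pi_{4,F_i}}(p)$ for every $p$. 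If $\pi_{F_i}$ is of type \textbf{(P)}, then $\pi_{F_i}$ is a Saito--Kurokawa lift of an elliptic cuspidal newform $f_i$ of weight $2k_i-2$, the spinor $L$-function factors as $L(s,\pi_{F_i},\rho_4)=L(s,\pi_{f_i})\,\zeta(s+1/2)\,\zeta(s-1/2)$, and hence $a_{F_i}(p)=\lambda_{f_i}(p)+p^{1/2}+p^{-1/2}$. In all cases $\Pi_{4,F_i}$, respectively $\pi_{f_i}$, is self-dual, so the coefficients $a_{F_i}(p)$ and $\lambda_{f_i}(p)$ are real.

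I would then rule out the mixed case. Suppose $F_1$ is of type \textbf{(G)} and $F_2$ of type \textbf{(P)}. By the Luo--Rudnick--Sarnak bound toward Ramanujan on $\GL_4$ \cite{LuoRudnickSarnak1999}, $|a_{F_1}(p)|=|\lambda_{\Pi_{4,F_1}}(p)|\ll p^{1/2-1/17}=o(p^{1/2})$, whereas $a_{F_2}(p)\geq p^{1/2}+p^{-1/2}-2$ by Deligne's bound for $f_2$; hence $a_{F_1}(p)<a_{F_2}(p)$ for all sufficiently large $p$, contradicting the hypothesis $a_{F_1}(p)=a_{F_2}(p)$ for almost all $p$. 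Thus $F_1$ and $F_2$ are of the same Arthur type.

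If both are of type \textbf{(G)}, then $\lambda_{\Pi_{4,F_1}}(p)=\lambda_{\Pi_{4,F_2}}(p)$ for almost all $p$; since $\Pi_{4,F_1}$ and $\Pi_{4,F_2}$ are self-dual unitary cuspidal representations of $\GL_4(\A)$, Theorem~\ref{SMO for GLn by L series}(i) --- which for $m\leq 4$ rests on Hypothesis~\ref{Hypothesis implies Selberg orthogonality} --- yields $\Pi_{4,F_1}\cong\Pi_{4,F_2}$. Comparing archimedean components forces $(k_1,j_1)=(k_2,j_2)$, and comparing Satake parameters at the unramified primes, using that the spin representation $\rho_4\colon\Sp_4(\C)\to\GL_4(\C)$ is faithful, gives $\pi_{F_1,p}\cong\pi_{F_2,p}$ for almost all $p$. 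The strong multiplicity one theorem for paramodular newforms of Schmidt \cite{Schmidt2018,Schmidt2020} then gives $F_1=c\cdot F_2$; equivalently, $\pi_{F_1}$ and $\pi_{F_2}$ lie in the same global Arthur packet, in which the Roberts--Schmidt local newform theory \cite{RobertsSchmidt2007} shows that, up to scalars, there is a unique paramodular newform. If instead both are of type \textbf{(P)}, then $j_1=j_2=0$ and the terms $p^{1/2}+p^{-1/2}$ cancel, so $\lambda_{f_1}(p)=\lambda_{f_2}(p)$ for almost all $p$; strong multiplicity one for $\GL_2$ (Theorem~\ref{SMO for GLn by L series}(i) with $m=m'=2$) gives $\pi_{f_1}\cong\pi_{f_2}$, whence $k_1=k_2$ and $\pi_{F_1},\pi_{F_2}$ share a Saito--Kurokawa parameter; by Schmidt's classification of paramodular Saito--Kurokawa newforms \cite{Schmidt2018} this determines the newform up to scalars, so again $F_1=c\cdot F_2$.

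The essential new input, and the step requiring care, is that only the single Dirichlet coefficient $a_{F_i}(p)$ is available --- rather than $a_{F_i}(p)$ together with $a_{F_i}(p^2)$ (equivalently the eigenvalues of $T(p)$ and $T(p^2)$) as in Schmidt's argument. This is exactly what Theorem~\ref{SMO for GLn by L series}(i) supplies on the $\GL_4$ side, via Hypothesis~\ref{Hypothesis implies Selberg orthogonality} for $m\leq 4$ (Rudnick--Sarnak and Kim). The remaining work is the bookkeeping across Arthur types --- in particular the archimedean and Ramanujan-type comparison excluding the mixed case --- and the descent from the identity $\Pi_{4,F_1}\cong\Pi_{4,F_2}$ of $\GL_4$-data back to an identity of paramodular newforms.
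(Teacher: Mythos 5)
Your proposal is correct and follows essentially the same route as the paper's proof: split according to Arthur type, rule out the mixed case by comparing the Saito--Kurokawa coefficient $p^{1/2}+p^{-1/2}+\lambda_{f}(p)$ against the Luo--Rudnick--Sarnak bound on the type \textbf{(G)} side, apply Theorem~\ref{SMO for GLn by L series}(i) to the $\GL_4$ transfers in the \textbf{(G)}--\textbf{(G)} case and descend via Schmidt's strong multiplicity one for paramodular newforms, and reduce the \textbf{(P)}--\textbf{(P)} case to strong multiplicity one for $\GL_2$ together with Schmidt's result on Saito--Kurokawa newforms. The only cosmetic differences are your use of Deligne's bound (rather than the $\GL_2$ Luo--Rudnick--Sarnak bound) in the mixed case and a slightly more explicit local comparison before invoking Schmidt, neither of which changes the argument.
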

\begin{proof}
Observing \eqref{para decomp to G and P types} and \eqref{para decomp to G and P types only G}, only the types {\bf (G)} and {\bf (P)} could happen in paramodular forms, and only the type {\bf (G)} shows up if we consider the vector-valued paramodular forms. Thus, we will separate three cases as follows.

i) We first assume that they are both of type {\bf (G)}, then by \cite{Schmidt2018} we can find $\Pi_{F_i}$, self-dual, symplectic, unitary, cuspidal automorphic representations of $\GL_4(\A)$ such that
\begin{equation}
    L(s,\pi_{F_i},\rho_4)=L(s,\Pi_{F_i}),\quad i=1, 2.
\end{equation}
Recall that we only consider the finite part of completed $L$-functions, i.e., there is no archimedean factor. It follows from Theorem~\ref{SMO for GLn by L series} (i) that $\Pi_{F_1}\simeq\Pi_{F_2}$ and hence $L(s,\pi_{F_1},\rho_4)=L(s,\pi_{F_2},\rho_4)$. Then the desired assertion in this case immediately follows from \cite[Theorem~2.6]{Schmidt2018}. 

ii) Assume that they are both of type {\bf (P)}, i.e., \textit{Gritsenko} or \textit{Saito-Kurokawa liftings}. Moreover, in this case we have $j_1=j_2=0$ as indicated above. Then we have eigenforms $f_i\in \mathcal{S}_{2k_i-2}^{\text{new}}(\Gamma_0^{(1)}(N_i))$ corresponding to $F_i, i=1, 2$. Note that the sign in the functional equations of $L(s, f_i)$ both are $-1$. Let $S$ be the set that contains all primes $p\mid \mathrm{lcm}[N_1, N_2]$, i.e., all ramified primes. And for all $p\notin S$, we have $a_{F_1}(p)=a_{F_2}(p)$ by assumption. As we also know that for $i=1, 2$,
\begin{equation}\label{SK lift L function in terms of f and zeta}
    L^S(s, \pi_{F_i}, \rho_4)=\zeta(s+1/2)\zeta(s-1/2)L^S(s, f_i),
\end{equation}
where 
\begin{equation}
    L(s, f_i)=\sum_{n\geq 1}\frac{\lambda_{f_i}(n)}{n^s}.
\end{equation}
Here, $\lambda_{f_i}(n)$ are the Hecke eigenvalues associated to $f_i$. Then for any $p\notin S$, we have 
\begin{equation}\label{Saito-Kurokawa primes}
    a_{F_i}(p)=p^{1/2}+p^{-1/2}+\lambda_{f_i}(p).
\end{equation}
In particular, this shows that $f_1$ is a constant multiple of $f_2$. Then by \cite[Theorem~5.5]{Schmidt2020}, we obtain $F_{f_1}=c\cdot F_{f_2}$ for some nonzero constant $c$ as desired.

iii) If we assume that $F_1$ is of type {\bf (P)} and $F_2$ is of type {\bf (G)}, then we can obtain a contradiction as follows. In fact, by \eqref{Saito-Kurokawa primes} and \eqref{EqnLRS99bound}, we know that
\begin{equation}
|a_{F_1}(p)|\geq \frac{1}{2}p^{1/2}
\end{equation}
for all sufficiently large $p$. On the other hand, since $F_2$ is of type {\bf (G)} by assumption, we have
\begin{equation}
|a_{F_2}(p)|\leq 4p^{1/2-\delta},
\end{equation}
for $\delta=\delta_4=\frac{1}{17}$ by \eqref{EqnLRS99bound}. Therefore, $a_{F_1}(p)\neq a_{F_2}(p)$ for any sufficiently large $p$, which is the desired contradiction.
\end{proof}

\section{Strong multiplicity one in terms of the standard \texorpdfstring{$L$}{L}-functions}
\label{section-standard}

In this section, we aim to give a result of distinguishing paramodular newforms by using the standard $L$-functions. More precisely, we will first prove the cuspidality of the automorphic representation associated to the standard $L$-function for a Hecke eigenform of type {\bf (G)}; see Theorem~\ref{Thm Pi5 cuspidal unitary} below. As a consequence, we will give a result about distinguishing paramodular cusp forms in terms of such $L$-functions via the Galois representation theoretic approach; see Theorem~\ref{main thm 2} below.

\subsection{Cuspidality of automorphic representations associated to the standard \texorpdfstring{$L$}{L}-functions}\label{cuspidality of GL5}

Let $F\in\mathcal{S}_{k, j}^{\mathrm{new}, \text{{\bf (G)}}}(K(N))$ be a paramodular newform of level $N$ and weight $(k, j)\in \Z_{>0}\times\Z_{\geq 0}$; see \eqref{para decomp to G and P types}. Then we can associate with $F$ a cuspidal automorphic representation $\pi_F$ of $\GSp_4(\A)$, which can be transferred to a self-dual, symplectic, unitary, cuspidal automorphic representation $\Pi_{4,F}$ of $\GL_4(\A)$. If we further assume that the local representation of $\Pi_{4,F}$ at primes $p=2,3$ are not supercuspidal, then by \cite[Theorem~A]{Kim2003} the exterior square $\Lambda^2\Pi_{4,F}$ is an automorphic representation of $\GL_6(\A)$. Since $\Pi_{4,F}$ is symplectic (i.e., its exterior square $L$-function has a pole at $s=1$), we can write
\begin{equation}\label{decomp of exterior square of Pi 4 F}
\Lambda^2\Pi_{4,F}\simeq\mathbf{1}\boxplus\Pi_{5,F}.
\end{equation}
Here, $\mathbf{1}$ is the trivial representation. A direct calculation indicates that the $L$-function of $\Pi_{5,F}$ in fact coincides with the standard $L$-function of $\pi_F$; see \eqref{ext rho 4 equals 1 plus rho 5}. Before giving a cuspidality criterion for $\Pi_{5,F}$ as in \eqref{decomp of exterior square of Pi 4 F}, we will need the following lemma.

\begin{lemma}\label{smo for twist cases}
Let $F_i\in\mathcal{S}_{k_i, j_i}^{\mathrm{new}, \text{{\bf (G)}}}(K(N_i)), i=1, 2$, be paramodular newforms of level $N_i$ and weight $(k_i, j_i)\in \Z_{>0}\times\Z_{\geq 0}$. With the notation as above, suppose that there exists a finite order character $\chi$ such that $\Pi_{4,F_1}\simeq\Pi_{4,F_2}\otimes\chi$ as cuspidal automorphic representations of $\GL_4(\A)$. Then $\pi_{F_1}\simeq\pi_{F_2}\otimes\chi$ as cuspidal automorphic representations of $\GSp_4(\A)$. 
\end{lemma}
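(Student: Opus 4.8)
The plan is to run the argument through the local Langlands correspondence for $\GSp_4$ and $\GL_4$, using the fact that paramodular newforms of type {\bf (G)} are exactly those whose associated $\GSp_4(\A)$-representation transfers to a cuspidal (self-dual, symplectic) $\GL_4(\A)$-representation, and that this transfer is essentially injective. First I would recall that the transfer $\pi_{F_i}\rightsquigarrow\Pi_{4,F_i}$ is realized place-by-place by the local functorial lift attached to the embedding $\GSp_4(\C)=\GSpin_5(\C)\hookrightarrow\GL_4(\C)$ of dual groups, which on the spinor side is the standard $4$-dimensional representation $\rho_4$. So the hypothesis $\Pi_{4,F_1}\cong\Pi_{4,F_2}\otimes\chi$ gives, for every place $v$, an isomorphism of the local lifts $\Pi_{4,F_1,v}\cong\Pi_{4,F_2,v}\otimes\chi_v$.

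The key step is to descend this local isomorphism back to $\GSp_4$. For almost all $v$ this is transparent: both $\pi_{F_i,v}$ are unramified, their Satake parameters in $\GSp_4(\C)$ map to the Satake parameters of $\Pi_{4,F_i,v}$ in $\GL_4(\C)$, and since $\rho_4$ (the standard rep of $\Sp_4(\C)\subset\GL_4(\C)$) is faithful, an equality of $\GL_4$-Satake parameters up to the twist $\chi_v$ forces equality of the $\GSp_4$-Satake parameters up to $\chi_v$ — here one uses that the $\GSp_4$-parameter is determined by its image under $\rho_4$ together with the (trivial) central character, which is compatible with twisting by $\chi_v$ since $\chi$ is $1$-dimensional. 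Hence $\pi_{F_1,v}\cong\pi_{F_2,v}\otimes\chi_v$ for almost all $v$. At the remaining finitely many places I would invoke injectivity of the local transfer: by the work of Gan--Takeda (local Langlands for $\GSp_4$) together with the characterization of the $\GSp_4\to\GL_4$ lift, the local lift determines the $L$-parameter into $\GSpin_5(\C)$, hence the $L$-packet, and on these packets (for representations arising from holomorphic paramodular newforms, which have paramodular-generic members) the local component is pinned down; twisting by $\chi_v$ commutes with the lift, so $\Pi_{4,F_1,v}\cong\Pi_{4,F_2,v}\otimes\chi_v$ yields $\pi_{F_1,v}\cong\pi_{F_2,v}\otimes\chi_v$ at every place.

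Having established $\pi_{F_1,v}\cong\pi_{F_2,v}\otimes\chi_v$ for all $v$, the conclusion $\pi_{F_1}\cong\pi_{F_2}\otimes\chi$ as automorphic representations follows from strong multiplicity one in the discrete spectrum of $\GSp_4(\A)$ in the relevant cases: since $F_1, F_2$ are of type {\bf (G)}, Arthur's classification (as recalled after \eqref{para decomp to G and P types}) guarantees that a type {\bf (G)} packet is determined by its local components, and $\pi_{F_2}\otimes\chi$ is again a type {\bf (G)} automorphic representation (cuspidal, with the same transfer twisted by $\chi$), so matching all local components forces global isomorphism. Alternatively, and more self-containedly, one can compare spinor $L$-functions: $L(s,\pi_{F_1},\rho_4)=L(s,\Pi_{4,F_1})=L(s,\Pi_{4,F_2}\otimes\chi)=L(s,\pi_{F_2}\otimes\chi,\rho_4)$, and since the relevant representations of $\GL_4(\A)$ are cuspidal, strong multiplicity one for $\GL_4$ (Theorem~\ref{SMO for GLn by L series} (i), applied to $\Pi_{4,F_1}$ and $\Pi_{4,F_2}\otimes\chi$) upgrades the near-equivalence at the level of $L$-factors to a global isomorphism of the $\GL_4$-transfers, and then the place-by-place descent above, promoted globally, gives $\pi_{F_1}\cong\pi_{F_2}\otimes\chi$.

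The main obstacle is the descent at the bad (ramified) places: one must be sure that the local functorial lift $\GSp_4\to\GL_4$ is \emph{injective} on the isomorphism classes occurring as local components of holomorphic paramodular newforms, and that it is equivariant under twisting by a $\GL_1$-character. For the generic members of local packets this is classical, but paramodular newforms can have non-generic local components (e.g. the relevant non-supercuspidal and, in type {\bf (P)}, excluded cases); here I would lean on the detailed local theory in Roberts--Schmidt and the explicit description of which $\GSpin_5$-parameters and packet members carry paramodular vectors, checking that within each such packet the member is uniquely determined by its $\GL_4$-lift. Twist-equivariance is not an issue since twisting an $L$-parameter $\varphi\colon W_{\Q_v}\to\GSpin_5(\C)$ by a character corresponds, after composing with $\rho_4$, to twisting the $\GL_4$-parameter by the same character, and this operation is compatible with the local Langlands correspondences on both sides.
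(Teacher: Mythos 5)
Your overall plan---descend the isomorphism $\Pi_{4,F_1}\cong\Pi_{4,F_2}\otimes\chi$ place by place through the local Langlands correspondence and then glue---is genuinely different from the paper's proof, which never analyzes the ramified places locally: the paper replaces the archimedean components by the generic members of their $L$-packets, invokes Ginzburg--Rallis--Soudry to obtain globally generic cuspidal companions $\widetilde{\pi_{F_i}}$ with the same finite components, applies Soudry's strong multiplicity one theorem for \emph{generic} cuspidal representations of $\GSp_4(\A)$ to get $\widetilde{\pi_{F_1}}\cong\widetilde{\pi_{F_2}}\otimes\chi$, and then matches the archimedean components via their $L$-packets and archimedean $L$-factors. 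Your unramified descent (Satake parameters) is fine, but the step at the ramified finite places has a genuine gap. The local lift to $\GL_4$ only records the $L$-parameter, hence the $L$-packet, and $\GSp_4$ packets can have several members, so there is no ``injectivity of the local transfer'' to invoke; one must pin down the member. Your proposed mechanism---uniqueness of the packet member carrying paramodular vectors---fails for the representation you actually need to identify: paramodularity is not stable under twisting, so when $\chi_v$ is ramified $\pi_{F_2,v}\otimes\chi_v$ need not be paramodular, and knowing that each packet has at most one paramodular member does not identify $\pi_{F_1,v}$ with $\pi_{F_2,v}\otimes\chi_v$ inside their common packet. The ingredient that makes a local descent work is the one the paper cites from Roberts--Schmidt and Weissauer: for type \textbf{(G)} the finite local components $\pi_{F_i,p}$ are \emph{generic}; genericity is twist-stable and a local packet has at most one generic member, so with that input the descent goes through (the paper instead packages this globally through Soudry's theorem, avoiding any case-by-case local analysis at bad places).

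Two further points. Your worry that ``paramodular newforms can have non-generic local components'' at the relevant finite places has it backwards in the present setting: non-generic finite components occur for type \textbf{(P)}, which is excluded here, while for type \textbf{(G)} the only non-generic component is the archimedean one---and there your argument still needs the separate observation that $\pi_{F_1,\infty}$ and $\pi_{F_2,\infty}\otimes\chi_\infty$ are both the holomorphic (non-generic) members of the same archimedean packet, since the $\GL_4$-lift again does not distinguish the two members. Finally, your ``alternative'' global step via strong multiplicity one for $\GL_4$ is circular (the global isomorphism of the $\GL_4$-transfers is the hypothesis); it is also unnecessary, since once all local components match, the global isomorphism $\pi_{F_1}\cong\pi_{F_2}\otimes\chi$ is immediate.
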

\begin{proof}
Notice that $\Pi_{4,F_i}, i=1, 2$, have the trivial central character, and then $\chi$ is a quartic character by assumption. Then by the definition of Arthur packets, $\pi_{F_1}$ and $\pi_{F_2}\otimes\chi$ are in the same global Arthur packet and hence the same local Arthur packet for each local place. We write $\pi_{F_i}=\pi_{F_i,\infty}\otimes(\otimes_{p<\infty}^{'}\pi_{F_i,p})$ for $i=1, 2$, and $\chi=\chi_{\infty}\otimes(\otimes_{p<\infty}^{'}\chi_p)$.

Since $F_1,F_2$ are paramodular newforms of type \text{{\bf (G)}}, by \cite[Theorem~1.1]{Schmidt2018} the local representation at each non-archimedean place is generic. Moreover, such generic local representation is unique in each local Arthur packet and hence $\pi_{F_1,p}\simeq\pi_{F_2,p}\otimes\chi_p$ for all finite places $p$. As for the archimedean place, the local representation is non-generic since the paramodular forms under consideration are holomorphic. A result of Schmidt (\cite[Lemma~2.3]{Schmidt2018}) shows that in each paramodular type at archimedean place we have $k\geq 2$. Notice that there are exactly two local representations in each packet at the archimedean place: one is generic and the other one is non-generic; see for example \cite[\S~3]{Schmidt2017}. This implies that $\pi_{F_1,\infty}\simeq\pi_{F_2,\infty}\otimes\chi_{\infty}$ since they are both non-generic. By \cite[Theorem~2.6]{Schmidt2018}, we have $\pi_{F_1}\simeq\pi_{F_2}\otimes\chi$ as cuspidal automorphic representations of $\GSp_4(\A)$. 
\end{proof}

We are ready to show the cuspidality of $\Pi_{5, F}$ as in \eqref{decomp of exterior square of Pi 4 F} under some mild condition as follows. Note that the full level case has been done in \cite[Theorem~5.1.5]{PitaleSahaSchmidt2014}.

\begin{theorem}\label{Thm Pi5 cuspidal unitary}
Let $\pi_F$ be a self-dual cuspidal automorphic representation of $\GSp_4(\A)$ associated to a paramodular newform $F$ of type {\bf (G)}. Suppose that the local components $\pi_{F,2}$ and $\pi_{F,3}$ of $\pi_F$ are not supercuspidals and there does not exist non-trivial quartic character $\chi$ such that $\pi_F\simeq\pi_F\otimes\chi$, then the automorphic representation $\Pi_{5, F}$ of $\GL_5(\A)$ corresponding to $\pi_F$ is cuspidal. Moreover, such $\Pi_{5, F}$ is unitary.
\end{theorem}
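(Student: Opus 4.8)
The plan is to prove cuspidality of $\Pi_{5,F}$ by ruling out every possible way the exterior square transfer $\Lambda^2\Pi_{4,F} = \mathbf{1}\boxplus\Pi_{5,F}$ could have $\Pi_{5,F}$ decompose further. Since $\Pi_{4,F}$ is a self-dual symplectic cuspidal representation of $\GL_4(\A)$, the work of Kim (\cite[Theorem~A]{Kim2003}) tells us $\Lambda^2\Pi_{4,F}$ is automorphic, and general results on exterior square transfers (as in \cite{Kim2003} and the classification of the image, see also \cite{PitaleSahaSchmidt2014}) constrain the isobaric decomposition: the possibilities for $\Pi_{5,F}$ are (a) cuspidal, (b) a sum of two cuspidal pieces of $\GL_m(\A)$ with $m\in\{1,2,3,4\}$ summing to $5$, possibly with Hecke characters. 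I would organize the proof as a case analysis on the shape of the isobaric sum, using in each case the known structure theory: $\Lambda^2$ of a symplectic $\GL_4$ representation is related to whether $\Pi_{4,F}$ is itself induced (automorphically induced from a smaller group, or a symmetric-cube lift, etc.).

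The key input is that $\Pi_{4,F}$ is \emph{not} a proper isobaric sum (it is cuspidal) and carries extra rigidity from coming from a paramodular form of type {\bf (G)} with $k\ge 3$. First I would recall the classification: a non-cuspidal $\Lambda^2\Pi_{4,F}$ (beyond the forced trivial summand) forces $\Pi_{4,F}$ to be of a special type — e.g. $\Pi_{4,F} \cong \mathrm{Ind}(\tau)$ for $\tau$ a cuspidal representation of $\GL_2(A_E)$ for a quadratic extension $E/\Q$ (dihedral/CM-type), or $\Pi_{4,F}$ is itself an Asai/tensor-induced construction, or $\Pi_{4,F}\cong \Pi_{4,F}\otimes\chi$ for a nontrivial quadratic or quartic character $\chi$. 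Each such degeneracy of $\Pi_{4,F}$ corresponds, via Lemma~\ref{smo for twist cases} and the dictionary between $\pi_F$ and $\Pi_{4,F}$, to $\pi_F \cong \pi_F\otimes\chi$ for a nontrivial character $\chi$ whose order divides $4$ — which is exactly what the hypothesis excludes. So the strategy is: assume $\Pi_{5,F}$ is not cuspidal, push the degeneracy back to a self-twist $\pi_F\cong\pi_F\otimes\chi$ with $\chi$ quartic and nontrivial, and derive a contradiction.

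Concretely I would argue: if $\Pi_{5,F} = \sigma_1\boxplus\sigma_2$ with the $\sigma_i$ cuspidal (of total degree $5$, so degrees $(1,4)$, $(2,3)$, or with a further split), then since $\Pi_{5,F}$ is self-dual and orthogonal (it lives in $\SO_5$, cf.\ \eqref{ext rho 4 equals 1 plus rho 5}) the summands are each self-dual or come in dual pairs; tracking $L$-functions, $L(s,\Pi_{4,F}\times\Pi_{4,F}) = L(s,\mathrm{sym}^2\Pi_{4,F})L(s,\Lambda^2\Pi_{4,F}) = \zeta(s)L(s,\Pi_{5,F})L(s,\mathrm{sym}^2\Pi_{4,F})$, and the order of the pole at $s=1$ of the left side counts $\dim\mathrm{Hom}(\Pi_{4,F},\Pi_{4,F})=1$, so $L(s,\Pi_{5,F})$ is pole-free at $s=1$ — consistent with several summands but then each summand $\sigma_i$ must be non-self-dual or the pair must pair up nontrivially, and a summand of degree $1$ would be a Hecke character $\eta$ appearing in $\Lambda^2\Pi_{4,F}$, forcing (by the standard criterion) $\Pi_{4,F}\cong\Pi_{4,F}\otimes\eta$ with $\eta$ quadratic; a degree-$2$ summand forces $\Pi_{4,F}$ dihedral, again yielding a self-twist. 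In the dihedral/self-twist cases one gets $\pi_F\cong\pi_F\otimes\chi$ with $\chi$ of order $2$ or $4$, contradicting the hypothesis. For unitarity: $\Pi_{4,F}$ is unitary (it is a transfer of a unitary cuspidal representation of $\GSp_4(\A)$), hence $\Lambda^2\Pi_{4,F}$ is unitary as an isobaric sum of unitary cuspidal representations, so $\Pi_{5,F}$ is unitary as well.

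The main obstacle I anticipate is making the case analysis on the isobaric decomposition of $\Lambda^2\Pi_{4,F}$ genuinely exhaustive and correctly attaching to each non-cuspidal possibility a \emph{nontrivial quartic} self-twist of $\pi_F$ — in particular handling the boundary cases where the self-twisting character could a priori have order $2$ versus order $4$, and verifying that the exceptional-isogeny dictionary $\PGSp_4\cong\SO_5$ together with Lemma~\ref{smo for twist cases} faithfully transports the $\GL_4$-level degeneracy down to the $\GSp_4$ level. I would lean on the explicit description of the image of the exterior-square transfer and the classification of self-dual cuspidal representations of $\GL_4$ (monomial, dihedral, symmetric-cube type) to close this gap, citing \cite{Kim2003} and the relevant parts of \cite{RamakrishnanShahidi2007} or \cite{PitaleSahaSchmidt2014} rather than reproving it.
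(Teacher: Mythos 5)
Your overall strategy is the same as the paper's: argue by contradiction, run a case analysis on the isobaric decomposition of $\Pi_{5,F}$ inside $\Lambda^2\Pi_{4,F}=\mathbf{1}\boxplus\Pi_{5,F}$, convert each non-cuspidal shape into a self-twist $\Pi_{4,F}\cong\Pi_{4,F}\otimes\chi$ with $\chi^4=\mathbf{1}$, descend to $\pi_F\cong\pi_F\otimes\chi$ via Lemma~\ref{smo for twist cases}, and contradict the goodness hypothesis; unitarity is read off from the unitarity built into Kim's setting, as in the paper. Your degree-one-summand case is essentially right (and your pole count correctly rules out the summand $\eta=\mathbf{1}$; also note that $\Pi_{4,F}\cong\Pi_{4,F}\otimes\eta$ only forces $\eta^4=\mathbf{1}$, not $\eta^2=\mathbf{1}$, which is harmless here). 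But be aware that your ``standard criterion'' hides a real issue: deducing the self-twist from $L^S(s,\Pi_{4,F}\times\Pi_{4,F}\times\eta^{-1})$ requires knowing the twisted symmetric-square factor does not vanish at $s=1$, and $\sym^2$ of $\GL_4$ is not known to be automorphic; the paper's Case I is devoted precisely to this nonvanishing, obtained via $\sym^2\rho_4=\Lambda^2\rho_5$ (see \eqref{ext 5 equal sym 4}) and the automorphy of the relevant exterior squares (\cite{Kim2003}), before passing from partial to completed $L$-functions using nonvanishing of the finitely many local factors.

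The genuine gap is the $(2,3)$ case. You assert that a degree-two cuspidal summand forces $\Pi_{4,F}$ to be dihedral and hence self-twisted, deferring to a ``classification of the image of the exterior-square transfer,'' but you neither state nor prove such a classification, and it is not available off the shelf in the form you need for the residual degree-five piece (the full-level analogue, \cite[Theorem~5.1.5]{PitaleSahaSchmidt2014}, is proved by a direct argument rather than quoted). You must also dispose of the boundary subcase where the candidate twisting character is trivial, where no nontrivial self-twist arises and the contradiction has to come from pole orders instead. The paper closes this case with a short computation you are missing: writing $\Pi_{5,F}=\tau_1\boxplus\tau_2$ with $\tau_1$ on $\GL_2(\A)$ and $\tau_2$ on $\GL_3(\A)$, one has $\Lambda^2\Pi_{5,F}\supset\omega_{\tau_1}$, so by \eqref{ext 5 equal sym 4} the partial $L$-function $L^S(s,\Pi_{4,F},\sym^2\times\omega_{\tau_1}^{-1})$ has a pole at $s=1$; combined with \eqref{decomp of exterior square of Pi 4 F} this gives $L(s,\Pi_{4,F}\times\Pi_{4,F}\times\omega_{\tau_1}^{-1})$ a pole at $s=1$, hence $\Pi_{4,F}\cong\Pi_{4,F}\otimes\omega_{\tau_1}$ with $\omega_{\tau_1}$ quartic (indeed quadratic, since $\tau_1$ is self-dual; and $\omega_{\tau_1}=\mathbf{1}$ is impossible, as it would force a double pole of the Rankin--Selberg $L$-function of the cuspidal $\Pi_{4,F}$ with itself). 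With that supplement, or an honest proof of your dihedral claim, your argument coincides with the paper's.
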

\begin{proof}
We prove this by contradiction.  Let $S$ be the set consisting of ramified primes of $\Pi_{5, F}$ and primes $p=2,3$. If the hypothesis
of the Theorem is not true, then the partial standard $L$-function
\begin{equation}
L^S(s,\Pi_{5, F})=L^S(s, \pi_F, \rho_5)=L^S(s,\tau_1)^{m(\tau_1)}\cdots L^S(s,\tau_r)^{m(\tau_r)},
\end{equation}
where $\tau_i$ is a cuspidal automorphic representation of $\GL_{n_i}(\A)$, $\tau_i$ distinct and $\sum_{i=1}^r n_im(\tau_i)=5$. By the assumption in \cite[Page~139]{Kim2003}, we know that all $\tau_i$ are unitary representations. Furthermore, we can assume that $n_1\leq n_2\leq\cdots \leq n_r$. 

\textbf{Case I:} If $n_1=1$, then $\tau_1=\chi$ for some character and hence $L^S(s,\Pi_{5, F}\times\chi^{-1})$ has a pole at $s=1$. On the other hand, we can show that the partial exterior square $L$-function of $\Pi_{5, F}$, denoted by $L^S(s, \Pi_{5, F}, \Lambda^2)$, is nonzero at $s=1$. Indeed, for $n_2\neq4$, i.e., $n_i\leq 3$ with $1\leq i\leq r$, we know that $\Lambda^2\Pi_{5, F}$ is automorphic and hence $L^S(s, \Pi_{5, F}, \Lambda^2)\neq 0$ at $s=1$ as desired; and when $n_2=4$, we have $\Pi_{5, F}\simeq\chi\boxplus\tau_2$ and hence 
\begin{equation}
L^S(s,\Pi_{5, F}, \Lambda^2)=L^S(s,\chi\boxplus\tau_2,\Lambda^2)=L^S(s,\chi\otimes\tau_2)L^S(s,\tau_2,\Lambda^2).
\end{equation}
By \cite[Theorem~5.3.1]{Kim2003}, we can find an automorphic representation $\widetilde{\Pi}$ of $\GL_6(\A)$ such that 
\begin{equation}
L^S(s,\tau_2,\Lambda^2)=L^S(s,\widetilde{\Pi})
\end{equation}
and hence $L^S(s,\tau_2,\Lambda^2)\neq 0$ at $s=1$ as well. 

Next, recall that $\Pi_{4, F}$ is a self-dual, symplectic, unitary, cuspidal automorphic representation $\Pi_{4,F}$ of $\GL_4(\A)$, which is a strong functorial lifting of $\pi_F$. Therefore, for $\tau_1=\chi$, observing \eqref{ext 5 equal sym 4} we have
\begin{equation}\label{nonvanishing of sym 4}
L^S(1,\Pi_{4, F},\sym^2\times\chi^{-1})=L^S(1, \Pi_{5, F}, \Lambda^2\times \chi^{-1})\neq 0.
\end{equation}
Then it follows from \eqref{decomp of exterior square of Pi 4 F} and \eqref{nonvanishing of sym 4} that
\begin{equation}
L^S(s,\Pi_{4, F}\times\Pi_{4, F}\times\chi^{-1})=L^S(s,\chi^{-1})L^S(s,\Pi_{5, F}\times\chi^{-1})L^S(s,\Pi_{4, F},\sym^2\times\chi^{-1})
\end{equation}
has a pole at $s=1$. In addition, we know that the product of any finite number of local $L$-functions is non-vanishing at $s=1$. This implies that 
\begin{equation}
 L(s,\Pi_{4, F}\times\Pi_{4, F}\times\chi^{-1})=\left(\prod_{p\in S}L_p(s,\Pi_{4, F}\times\Pi_{4, F}\times\chi^{-1})\right)L^S(s,\Pi_{4, F}\times\Pi_{4, F}\times\chi^{-1})
\end{equation} 
has a pole at $s=1$. Hence 
\begin{equation}\label{pi 4 equals pi 4 chi}
\Pi_{4, F}\simeq\Pi_{4, F}\otimes\chi
\end{equation}
since $\Pi_{4, F}$ is self-dual. Moreover, we can see that $\chi$ is a quartic character, i.e., $\chi^4=\mathbf{1}$ by taking the central character in both sides of \eqref{pi 4 equals pi 4 chi}. Then by Lemma~\ref{smo for twist cases}, we see that $\pi_F\simeq\pi_F\otimes\chi$, a contradiction.

\textbf{Case II:} Next we assume that 
\begin{equation}
L^S(s,\Pi_{5, F})=L^S(s,\tau_1)L^S(s,\tau_2),
\end{equation}
where $\tau_1$ is a unitary cuspidal automorphic representation of $\GL_{2}(\A)$ and $\tau_2$ is a unitary cuspidal automorphic representation of $\GL_{3}(\A)$. Then by a straightforward calculation we have 
\begin{equation}\label{case II ext 5 decomp}
L^S(s,\Pi_{5, F},\Lambda^2)=L^S(s,\omega_{\tau_1})L^S(s,\tau_1\times\tau_2)L^S(s,\omega_{\tau_2}\widetilde{\tau_2}).
\end{equation}
Here, $\omega_{\tau_i}$ is the central character of $\tau_i, i=1, 2$, and $\widetilde{\tau_2}$ is the contragredient representation of $\tau_2$. Then by \eqref{ext 5 equal sym 4} and \eqref{case II ext 5 decomp}, we see that
\begin{equation}
L^S(s,\Pi_{4, F},\sym^2\times\omega_{\tau_1}^{-1})=L^S(s,\Pi_{5, F},\Lambda^2\times\omega_{\tau_1}^{-1})
\end{equation}
has a simple pole at $s=1$. Similarly as the previous case, we can obtain that
\begin{equation}
L^S(s,\Pi_{4, F}\times\Pi_{4, F}\times\omega_{\tau_1}^{-1})=L^S(s,\omega_{\tau_1}^{-1})L^S(s,\Pi_{5, F}\times\omega_{\tau_1}^{-1})L^S(s,\Pi_{4, F},\sym^2\times\omega_{\tau_1}^{-1})
\end{equation}
has a simple pole at $s=1$. And so
\begin{equation}
L(s,\Pi_{4, F}\times\Pi_{4, F}\times\omega_{\tau_1}^{-1})=L(s,\omega_{\tau_1}^{-1})L(s,\Pi_{5, F}\times\omega_{\tau_1}^{-1})L(s,\Pi_{4, F},\sym^2\times\omega_{\tau_1}^{-1})
\end{equation}
has a simple pole at $s=1$. A similar argument implies that $\pi_F\simeq\pi_F\otimes\omega_{\tau_1}^{-1}$ with $\omega_{\tau_1}^{-4}=\mathbf{1}$, a contradiction. 

In conclusion, we have shown that $\Pi_{5, F}$ is cuspidal. Again, observing the assumption in \cite[Page~139]{Kim2003}, $\Pi_{5, F}$ is a unitary representation.
\end{proof}

Motivated by Theorem~\ref{Thm Pi5 cuspidal unitary}, we introduce the following definition for a certain class of Hecke eigenforms.

\begin{definition}\label{definition of good}
A Hecke eigenform $F\in \mathcal{S}_{k, j}(K(N))$ is called \textit{good} if the local components $\pi_{F,2}$ and $\pi_{F,3}$ of the associated cuspidal automorphic representation $\pi_F$ of $\GSp_4(\A)$ are not supercuspidals and there does not exist a non-trivial character $\chi$ such that $\pi_F\simeq\pi_F\otimes\chi$.
\end{definition}
As mentioned in Remark~\ref{rk 2 after main thm 2}, the conditions on the local representations at primes $p=2, 3$ in the definition of a Hecke eigenform $F$ being \textit{good} comes from the assumptions in \cite[Theorem~A]{Kim2003}.

\subsection{Distinguishing paramodular newforms in terms of the standard \texorpdfstring{$L$}{L}-functions}
Before we prove the main result in this subsection, we first review some standard facts concerning Galois representations attached to Siegel modular forms of degree $2$. 

Let $(k, j)\in\Z_{\geq 3}\times\Z_{\geq 0}$. Let $\pi$ be a cuspidal automorphic representation of $\GSp_4(\mathbb{A})$ with trivial central character of weight $(k, j)$. Let $S$ be the set of places of $\Q$ at which $\pi$ is ramified. For every rational prime $\ell$, one can attach a semisimple symplectic Galois representation
\begin{equation}
r_{\pi , \ell}\colon \Gal(\overline{\Q}/\Q) \rightarrow \GSp_4(\overline{\Q}_{\ell})
\end{equation}
that satisfying the following properties:
\begin{enumerate}
    \item[1.] the similitude character of $r_{\pi , \ell}$ is $\mathrm{sim}\,r_{\pi , \ell}=\varepsilon_{\ell}^{-(2k + j -3)}$, where $\varepsilon_{\ell}$ is the $\ell$-adic cyclotomic character;
    \item[2.] $r_{\pi , \ell}^{\vee} \cong r_{\pi , \ell}\otimes \varepsilon_{\ell}^{2k + j -3}$ and $\det\,r_{\pi , \ell}= \mathrm{sim}^2r_{\pi, \ell}= \varepsilon_{\ell}^{-2(2k + j -3)}$;
    \item[3.] $r_{\pi, \ell}$ is unramified at all finite places $v$ such that $v\not \in S\cup \{\ell\}$;
    \item[4.] $r_{\pi, \ell}$ is de Rham at the place $\ell$ and crystalline if $\ell\not\in S$; 
    \item[5.] $r_{\pi, \ell}$ is irreducible for all but finitely many prime $\ell$; and
    \item[6.] the Hodge-Tate weights of $r_{\pi, \ell}$ is $\{0,k-2, k+j-1 ,2k +j -3\}$.
\end{enumerate}
See \cite{Tay1993, Lau2005, Wei2005} for references and more general results.

Suppose that $F\in \mathcal{S}_{k, j}(K(N))$ is a paramodular cusp form. Then we know that the associated cuspidal automorphic representation $\pi_F$ of $\GSp_4(\mathbb{A})$ has the minimal $K$-type $(k+j, k)$ and trivial central character. 
Furthermore, the Hodge-Tate weights of $r_{\pi_F, \ell}$ are $\{0, k-2, k+j-1, 2k+j-3\}$. Then $r_{\pi_F, \ell}$ is regular (i.e., has distinct Hodge-Tate weights) when $k\geq 3$. The main theorem in this subsection is as follows.

\begin{theorem}\label{main thm 2}
Let $(k_i, j_i)\in\Z_{\geq 3}\times\Z_{\geq 0}$ and $N_i\in\Z_{>0}$ for $i=1, 2$. Let $F_i\in \mathcal{S}_{k_i, j_i}^{\mathrm{new}}(K(N_i)), i=1, 2$, be two Hecke eigenforms, which are \textit{good} as defined in Definition~\ref{definition of good}.
For $\re(s)>3/2$, let 
$$
L(s,\pi_{F_i},\rho_5)=\sum_{n\geq 1}\frac{b_{F_i}(n)}{n^s},\quad i=1, 2,
$$
be the associated standard $L$-functions. If $b_{F_1}(p)=b_{F_2}(p)$ for almost all primes $p$, then there exists a quadratic character $\chi$ such that $\pi_{F_1}\simeq\pi_{F_2}\otimes\chi$. Additionally, if we assume that $N_1$ and $N_2$ are squarefree numbers, then $F_1=c\cdot F_2$ for some nonzero constant $c$.
\end{theorem}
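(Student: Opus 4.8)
The plan is to reduce to the case that both $F_i$ are of Arthur type {\bf (G)}, translate the hypothesis into an isomorphism $\Pi_{5,F_1}\cong\Pi_{5,F_2}$ of cuspidal automorphic representations of $\GL_5(\A)$ via the strong multiplicity one input of Section~\ref{section-spinor}, and finally descend this isomorphism to $\GSp_4$ through the $4$-dimensional Galois representations $r_{\pi_{F_i},\ell}$. For the reduction: by \eqref{para decomp to G and P types only G} any form with $j>0$ is of type {\bf (G)}, so the only case to treat separately is $j_1=j_2=0$ with at least one $F_i$ of type {\bf (P)}. If both are of type {\bf (P)}, write $F_i$ as a lift of an elliptic newform $f_i$ of weight $2k_i-2$; computing the degree-$5$ Satake parameters from the spin parameters $\{p^{1/2},p^{-1/2},\gamma_p,\gamma_p^{-1}\}$ gives $b_{F_i}(p)=1+(p^{1/2}+p^{-1/2})\lambda_{f_i}(p)$ for $p\nmid N_i$, whence $\lambda_{f_1}(p)=\lambda_{f_2}(p)$ for almost all $p$, so $f_1=c f_2$ by multiplicity one on $\GL_2$ and then $F_1=c' F_2$ by \cite[Theorem~5.5]{Schmidt2020} exactly as in case ii) of the proof of Theorem~\ref{main thm 1}. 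If exactly one of them, say $F_1$, is of type {\bf (P)}, then $b_{F_1}(p)=b_{F_2}(p)$ for almost all $p$ together with the Ramanujan bound $|b_{F_2}(p)|\leq 5$ for the {\bf (G)}-form $F_2$ (valid since $k_2\geq 3$; see \cite{Weissauer2009}) forces $|\lambda_{f_1}(p)|\ll p^{-1/2}$, contradicting the divergence of $\sum_p\lambda_{f_1}(p)^2/p$.

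Now assume both $F_i$ are good of type {\bf (G)}. By Theorem~\ref{Thm Pi5 cuspidal unitary} the degree-$5$ representations $\Pi_{5,F_i}$, with $L(s,\Pi_{5,F_i})=L(s,\pi_{F_i},\rho_5)$ so that $b_{F_i}(p)=\lambda_{\Pi_{5,F_i}}(p)$, are cuspidal, unitary, and self-dual (of orthogonal type, since $\rho_5$ lands in $\SO_5$). The hypothesis becomes $\lambda_{\Pi_{5,F_1}}(p)=\lambda_{\Pi_{5,F_2}}(p)$ for almost all $p$. Because each $F_i$ is a holomorphic Siegel cusp form of type {\bf (G)} with $k_i\geq 3$, its transfer $\Pi_{4,F_i}$ to $\GL_4(\A)$ satisfies the Ramanujan conjecture (\cite{Weissauer2009}); as $\Lambda^2\Pi_{4,F_i}=\mathbf 1\boxplus\Pi_{5,F_i}$, the Satake parameters of $\Pi_{5,F_i}$ are products of pairs of unit-modulus numbers, so $\Pi_{5,F_i}$ satisfies the Ramanujan conjecture as well; in particular $L(s,\sym^2\Pi_{5,F_i})$ converges absolutely for $\re(s)>1$. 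Hence Theorem~\ref{SMO for GLn by L series} (either (ii) or (iv)) gives $\Pi_{5,F_1}\cong\Pi_{5,F_2}$.

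It remains to produce the quartic twist. Fix a prime $\ell$ for which $r_{\pi_{F_1},\ell}$ and $r_{\pi_{F_2},\ell}$ are irreducible (possible for all but finitely many $\ell$, as recalled above). Since $r_{\pi_{F_i},\ell}$ is $\GSp_4$-valued, $\Lambda^2 r_{\pi_{F_i},\ell}\cong\varepsilon_\ell^{-(2k_i+j_i-3)}\oplus s_{i,\ell}$, where $s_{i,\ell}$ is a $5$-dimensional Galois representation whose $L$-function agrees with $L(s,\Pi_{5,F_i})$ at almost all places; by Chebotarev and $\Pi_{5,F_1}\cong\Pi_{5,F_2}$ we get $s_{1,\ell}\cong s_{2,\ell}$ (irreducible, as $\Pi_{5,F_i}$ is cuspidal). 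The Hodge--Tate weights of $\Lambda^2 r_{\pi_{F_i},\ell}$ are the pairwise sums of $\{0,k_i-2,k_i+j_i-1,2k_i+j_i-3\}$, so after removing the similitude weight one finds that $s_{i,\ell}$ has Hodge--Tate weights $\{k_i-2,\,k_i+j_i-1,\,2k_i+j_i-3,\,3k_i+j_i-5,\,3k_i+2j_i-4\}$, which are pairwise distinct exactly because $k_i\geq 3$. Matching these ordered weights across $s_{1,\ell}\cong s_{2,\ell}$ forces $k_1=k_2$ and $j_1=j_2$, and therefore $\Lambda^2 r_{\pi_{F_1},\ell}\cong\Lambda^2 r_{\pi_{F_2},\ell}$. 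Since $\rho_5$ identifies $\PGSp_4$ with $\SO_5$ (with central kernel on $\Sp_4$), this isomorphism of exterior squares descends to $r_{\pi_{F_1},\ell}\cong r_{\pi_{F_2},\ell}\otimes\psi$ for a finite-order Galois character $\psi$, and comparing similitude characters gives $\psi^4=\mathbf 1$. By Chebotarev and strong multiplicity one on $\GL_4$, the attached Dirichlet character $\chi$ satisfies $\Pi_{4,F_1}\cong\Pi_{4,F_2}\otimes\chi$ with $\chi^4=\mathbf 1$ (alternatively, $\chi$ can be produced by a Rankin--Selberg pole argument as in the proof of Theorem~\ref{Thm Pi5 cuspidal unitary}, using $\sym^2\rho_4=\Lambda^2\rho_5$), and then Lemma~\ref{smo for twist cases} upgrades this to $\pi_{F_1}\cong\pi_{F_2}\otimes\chi$, proving the first assertion. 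When $N_1,N_2$ are squarefree, $\pi_{F_i}$ has squarefree paramodular conductor $N_i$, so every local conductor exponent is $\leq 1$; by the paramodular local newform theory of \cite{RobertsSchmidt2007}, twisting a representation with conductor exponent $\leq 1$ by a ramified character strictly increases its conductor, so $\chi$ must be unramified everywhere, hence trivial, and thus $\pi_{F_1}\cong\pi_{F_2}$, $N_1=N_2$, and $F_1=c F_2$.

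The main obstacle is this descent: since strong multiplicity one for $\GSp_4$ fails for the non-generic holomorphic archimedean type, there is no purely automorphic way for us to pass from equality of the degree-$5$ $L$-functions to a relation among the degree-$4$ representations, so the argument must go through the Galois representations $r_{\pi_{F_i},\ell}$; their regularity --- equivalently the hypothesis $k_i\geq 3$ --- is exactly what is needed both to match the weights $(k_i,j_i)$ and to reconstruct $r_{\pi_{F_i},\ell}$ (up to a quartic twist) from its exterior square.
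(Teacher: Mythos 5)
Your overall architecture is the same as the paper's: the three-case reduction via \eqref{para decomp to G and P types} and \eqref{para decomp to G and P types only G}, the formula $b_{F_i}(p)=1+\lambda_{f_i}(p)(p^{1/2}+p^{-1/2})$ and $\GL_2$ strong multiplicity one in the (P)--(P) case, a size contradiction in the mixed case, Theorem~\ref{Thm Pi5 cuspidal unitary} plus the Galois representations $r_{\pi_{F_i},\ell}$ in the (G)--(G) case, Hodge--Tate weight matching to get $k_1=k_2$, $j_1=j_2$ and finiteness of the twisting character, Lemma~\ref{smo for twist cases} to descend to $\GSp_4$, and an everywhere-unramifiedness argument for squarefree levels. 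Your variants (Weissauer's Ramanujan bound instead of Sato--Tate in the mixed case; an automorphic strong multiplicity one step on $\GL_5$ before passing to Galois representations, where the paper simply compares Frobenius traces of the standard Galois representations and applies Brauer--Nesbitt) are workable, but the $\GL_5$ detour costs an extra input (Ramanujan for $\Pi_{4,F_i}$ via \cite{Weissauer2009}) that the paper's direct trace comparison does not need.

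The genuine gap is the sentence ``since $\rho_5$ identifies $\PGSp_4$ with $\SO_5$, this isomorphism of exterior squares descends to $r_{\pi_{F_1},\ell}\cong r_{\pi_{F_2},\ell}\otimes\psi$.'' Two ingredients are required here and you supply neither. First, irreducibility of the five-dimensional representations: your parenthetical ``irreducible, as $\Pi_{5,F_i}$ is cuspidal'' is not a proof --- irreducibility of the Galois representation attached to a cuspidal automorphic representation is itself a nontrivial theorem, and the paper invokes \cite[Theorem~3.2]{CG-irred} (together with Theorem~\ref{Thm Pi5 cuspidal unitary}) for exactly this point. Second, an isomorphism of the five-dimensional pieces as $\GL_5$-representations (equivalently, of the exterior squares) does not by itself give conjugacy inside $\SO_5$, which is what you need in order to transport the isomorphism through $\PGSp_4\cong\SO_5$ and then lift it to $r_1\cong r_2\otimes\psi$; the paper uses irreducibility plus Ramakrishnan's appendix to \cite{Ram2000} to upgrade the $\GL_5$-isomorphism to an $\SO_5$-isomorphism before pulling it back to $\PGSp_4$. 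Without these two steps the descent can fail, and this is precisely the crux of the (G)--(G) case. A smaller but real slip: your $s_{i,\ell}$, defined as the complement of $\varepsilon_\ell^{-(2k_i+j_i-3)}$ inside $\Lambda^2 r_{\pi_{F_i},\ell}$ (with the non-self-dual Hodge--Tate weights you list), does not have $L$-function equal to $L(s,\Pi_{5,F_i})$; only the similitude-normalized representation $\Lambda^2 r_{\pi_{F_i},\ell}\otimes\mathrm{sim}^{-1}r_{\pi_{F_i},\ell}\ominus\mathbf{1}$ does. Deducing $s_{1,\ell}\cong s_{2,\ell}$ from $\Pi_{5,F_1}\cong\Pi_{5,F_2}$ by Chebotarev therefore implicitly presupposes $2k_1+j_1=2k_2+j_2$, which is not yet known at that stage; one should compare the normalized standard representations (as the paper does), after which the weight-matching argument goes through.
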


\begin{proof}
Again, observing \eqref{para decomp to G and P types} and \eqref{para decomp to G and P types only G}, only the types {\bf (G)} and  {\bf (P)} could happen in paramodular forms and only the type {\bf (G)} shows up if we consider the vector-valued paramodular forms. This implies that the proof for the vector-valued case is actually even easier than that for the scalar-valued case. Thus, we will focus on the scalar-valued paramodular forms and hence we will consider three cases similarly as in the proof of Theorem~\ref{main thm 1}.

i) Suppose that both of $F_i, i=1, 2$, are of type {\bf (P)}. Then $F_i$ is a Saito-Kurokawa lifting of some elliptic Hecke eigenform $f_i\in \mathcal{S}_{2k_i-2}^{\text{new}}(\Gamma_0^{(1)}(N_i))$ for $i=1, 2$. Let $\lambda_{f_{i}}(p)$ be the Hecke eigenvalue of $f_{i}$ at prime $p\nmid \mathrm{lcm}[N_1, N_2]$. Then a direct calculation shows that
\begin{equation}\label{bF equals 1 Hecke eigenvalue p pm half}
b_{F_i}(p)=1+\lambda_{f_i}(p)(p^{1/2}+p^{-1/2}).
\end{equation}
Then, in this case, the desired result is an immediate consequence of the strong multiplicity one theorem for $\GL_2$.

ii) Without lose of generality, suppose that $F_1$ is of type {\bf (P)} and  $F_2$ is of type {\bf (G)}. Then we know that $F_1$ is a Saito-Kurokawa lifting of some $f_1\in \mathcal{S}_{2k_1-2}^{\text{new}}(\Gamma_0^{(1)}(N_1))$. By the Sato-Tate theorem \cite{HST2010}, we can always find a sequence of primes $\{p\}$ such that $|\lambda_{f_1}(p)|\geq \delta_0>0$. In particular, it follows from \eqref{bF equals 1 Hecke eigenvalue p pm half} that we find a sequence of primes $\{p\}$ such that
\begin{equation}
b_{F_1}(p)\gg p^{1/2}.
\end{equation}
On the other hand, it follows from Theorem~\ref{Thm Pi5 cuspidal unitary} we can associate $F_2$ with a cuspidal automorphic representation of $\GL_5(\A)$ due to the assumption that $F_2$ is \textit{good} of type {\bf (G)}. Then by \eqref{EqnLRS99bound} we can see that
\begin{equation}
b_{F_2}(p)\leq 5p^{1/2-1/26},
\end{equation}
which is a contradiction. That is, this case will never happen.

iii) Finally, we assume that both of $F_i, i=1, 2$, are of type {\bf (G)}. Let 
\begin{equation}
r_i\colon \Gal(\overline{\Q}/\Q) \rightarrow \GSp_4(\overline{\Q}_{\ell}), \quad i=1, 2,
\end{equation}
be the regular geometric Galois representation attached to $\pi_{F_i}$. We denote the projective representations of $r_i$ by
\begin{equation}
\mathrm{proj}\,r_i \colon \Gal(\overline{\Q}/\Q)\rightarrow \PGSp_4(\overline{\Q}_{\ell}).
\end{equation}
Then
\begin{equation}
\wedge^2 r_{i} \otimes \mathrm{sim}^{-1}r_{i} \cong \mathrm{std}\,r_i  \oplus \mathbf{1}, \quad i=1, 2,
\end{equation}
where $\mathrm{std}\,r_i$ can be identified with the projective representation $\mathrm{proj}\,r_i$ via the exceptional isomorphism of the algebraic group $\PGSp_4$ with the split orthogonal group $\SO_5$.
Next, by the assumption that $b_{F_1}(p) = b_{F_2}(p)$ for almost all primes $p$, together with the fact that $b_{F_i}(p) =\trace \,\mathrm{proj}\,r_{i}(\Frob_{p})$, we obtain
\begin{equation}
\trace \,\mathrm{proj}\,r_{1}(\Frob_{p})= \trace\, \mathrm{proj}\,r_{2}(\Frob_p).
\end{equation}
Here, $\Frob_p$ denotes the Frobenius element at $p$, and $\trace \,\mathrm{proj}\,r_{i}(\Frob_{p})$ is the trace of the endomorphism $\mathrm{proj}\,r_{i}(\Frob_{p})$. By Theorem~\ref{Thm Pi5 cuspidal unitary} and \cite[Theorem~3.2]{CG-irred}, $\mathrm{proj}\,r_{i}$ are irreducible as $\GL_5$-representations.
By the Brauer-Nesbitt theorem, $\mathrm{proj}\,r_1 $ is isomorphic to $\mathrm{proj}\,r_2$ as $\GL_5$-representations. Moreover, they are isomorphic to each other as $\SO_5$-representations (see \cite[Appendix]{Ram2000} by Ramakrishnan). So $\mathrm{proj}\,r_{1} \cong \mathrm{proj}\,r_{2}$ as $\PGSp_4$-representations. This implies that $r_1\cong\varphi r_2$
for some geometric character $\varphi$. In particular, $\trace\,r_1(\Frob_p) = \varphi(\Frob_p)\trace\,r_2(\Frob_p)$ for almost all primes $p$.

Also note that the Hodge-Tate weights of $\mathrm{proj}\,r_{i}$ are 
\begin{equation}
\{k_i + j_i -1, k_i -2, 0, -k_i +2, -k_i -j_i +1\},\quad i=1, 2.
\end{equation}
Then the isomorphism between $\mathrm{proj}\,r_{1} $ and $ \mathrm{proj}\,r_2$ implies that $k_1 = k_2$ and $j_1= j_2$. So the Hodge-Tate weights of $r_{1}$ and $r_2$ are the same. This implies that the Hodge-Tate weight of $\varphi$ is $0$. By the classification of one-dimensional geometric Galois representations, it follows that $\varphi$ is a character of finite order. 

Let $\chi$ be the Hecke character of $\Q^\times\backslash\A^\times$ associated to $\varphi$. Since 
$$\trace\,r_1(\Frob_p) = \varphi(\Frob_p)\trace\,r_2(\Frob_p)$$
for almost all primes $p$, we obtain that 
\begin{equation}
    \lambda_{\pi_{F_1}}(p) = \lambda_{\chi}(p) \lambda_{\pi_{F_2}}(p) = \lambda_{\pi_{F_2}\otimes \chi}(p)
\end{equation}
for almost all primes $p$. By Theorem~\ref{Thm Hypothesis H holds for m leq 4} for $m=4$ and \eqref{H implies SOC implies main conjecture} (or Theorem~\ref{SMO for GLn by L series} (i)) the associated cuspidal automorphic representations of $\GL_4(\A)$ corresponding to $F_1$ and $F_2$ differ by a finite order character $\chi$. Then by Lemma~\ref{smo for twist cases} we obtain $\pi_{F_1}\simeq\pi_{F_2}\otimes\chi$ as claimed.

Then, to see that $\chi$ is indeed a quadratic character, we first have that $\chi$ is a quartic character since $\pi_{F_1}$ and $\pi_{F_2}$ have trivial central character. Moreover, by $\pi_{F_1}\simeq\pi_{F_2}\otimes\chi$, one can see that
\begin{equation}
    L(s,\pi_{F_1}\times\pi_{F_1})=L(s,\pi_{F_2}\times \pi_{F_2}\chi^2)
\end{equation}
has a pole at $s=1$. In particular, this implies that $\pi_{F_2}\simeq\pi_{F_2}\otimes\chi^2$ as cuspidal automorphic representations of $\PGSp_4(\A_{\Q})$. Since $F_2$ is good by assumption, there does not exist non-trivial character $\chi$ such that $\pi_{F_2}\simeq\pi_{F_2}\otimes\chi$. It follows that $\chi^2$ must be the trivial character and hence $\chi$ is a quadratic character as expected.

As for the last assertion, first we know that the local components of cuspidal automorphic representations $\pi_{F_i}$ of $\GSp_4(\A)$ at primes $p\mid N_i, i=1, 2$, are Iwahori-spherical representations (see \cite{RobertsSchmidt2007}) due to the condition that the levels $N_i$ are squarefree. Hence $\chi$ is unramified for all primes $p$. This implies that $\chi=|\cdot|_{\A}^{it}$; see for example \cite[Proposition~3.1.2]{Bump1997}. And it is easy to see that $\chi$ is self-dual. Hence $t=0$, i.e., $\chi=\mathbf{1}$, the trivial representation as desired.
\end{proof}

\section{Distinguishing eigenforms by the central values of the twisted spinor \texorpdfstring{$L$}{L}-functions}\label{section-application}

In this section, we will give our third main result on distinguishing Hecke eigenforms by using the central values of a family of the twisted spinor $L$-functions. Recall that $\sigma_1$ is the standard representation of the dual group $\GL_1(\C)=\C^\times$ and $\pi_F$ is the associated cuspidal automorphic representation of $\GSp_4(\A)$ with trivial central character corresponding to a Hecke eigenform $F\in  \mathcal{S}_{k, j}^{\mathrm{new}}(K(N))$. Let $L(s,\pi_{F}\times\chi,\rho_4\otimes\sigma_1)$ be the twisted spinor $L$-function defined as in \eqref{finite part of twisted spinor L function}. Recall that $\Gamma_2=\Sp_4(\Z)$. Then we will show the following theorem.

\begin{theorem}\label{main thm 3}
Let $F_i\in\mathcal{S}_{k_i}(\Gamma_2), i=1, 2$, be two Hecke eigenforms. Suppose that for almost all primitive characters $\chi$ of squarefree conductor, we have
\begin{equation}\label{main assumption}
L(1/2,\pi_{F_1}\times\chi,\rho_4\otimes\sigma_1)=L(1/2,\pi_{F_2}\times\chi,\rho_4\otimes\sigma_1),
\end{equation}
then $k_1=k_2$ and $F_1=c\cdot F_2$ for some nonzero constant $c$.      
\end{theorem}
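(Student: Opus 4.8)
The plan is to reduce the problem about central values of twisted spinor $L$-functions to the first main theorem (Theorem~\ref{main thm 1}), by showing that knowing $L(1/2,\pi_{F_1}\times\chi,\rho_4\otimes\sigma_1)=L(1/2,\pi_{F_2}\times\chi,\rho_4\otimes\sigma_1)$ for almost all primitive $\chi$ of squarefree conductor forces $a_{F_1}(p)=a_{F_2}(p)$ for almost all $p$. The main tool will be a multiple (or double) Dirichlet series / averaging argument over the family of twists: one packages the central values $L(1/2,\pi_{F_i}\times\chi,\rho_4\otimes\sigma_1)$ as $\chi$ ranges over primitive characters of squarefree conductor $q$ (sieving out imprimitive ones via M\"obius inversion), and extracts the individual Dirichlet coefficients $a_{F_i}(n)$ by orthogonality of Dirichlet characters. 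Since the families agree, the extracted data agree, and after disentangling the archimedean and bad-Euler-factor contributions to the functional equation at $s=1/2$ (which for $\Gamma_2$-forms of weight $k$ are completely explicit — the conductor of the twist is a power of $q$, the gamma factor depends only on $k$, and the root number is a fixed Gauss-sum expression), one recovers $a_{F_1}(p)=a_{F_2}(p)$ for almost all $p$. Then Theorem~\ref{main thm 1} gives $F_1=c\cdot F_2$, and comparing weights (e.g.\ via the degree of the gamma factor, or the Hodge--Tate weights as in Theorem~\ref{main thm 2}, or simply the leading behavior of the Dirichlet series) yields $k_1=k_2$.

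More concretely, the steps I would carry out are: (1) Write down the completed functional equation $\Lambda(s,\pi_{F_i}\times\chi,\rho_4\otimes\sigma_1)=\epsilon(\pi_{F_i}\times\chi)\,\Lambda(1-s,\pi_{F_i}\times\bar\chi,\rho_4\otimes\sigma_1)$ for $F_i$ of full level, making the conductor, the gamma factor $L_\infty$, and the root number $\epsilon$ explicit in terms of $k_i$ and the conductor $q$ of $\chi$; here one uses that $\pi_{F_i,\infty}$ is the holomorphic discrete series attached to weight $k_i$ and that the twist by a character of conductor $q$ (squarefree, coprime to the level $=1$) has conductor $q^4$ with a Gauss-sum root number. (2) Use an approximate functional equation to write $L(1/2,\pi_{F_i}\times\chi,\rho_4\otimes\sigma_1)$ as a finite sum $\sum_n a_{F_i}(n)\chi(n) n^{-1/2} V(n/q^2)$ plus the dual term, with $V$ a fixed smooth cutoff depending only on $k_i$. (3) Average against $\bar\chi(m)$ over primitive $\chi$ of conductor $q$ (using M\"obius to pass from all characters mod $q$ to primitive ones), letting $q\to\infty$ along squarefree integers; orthogonality isolates the coefficient $a_{F_i}(m)$ for each fixed $m$ coprime to $q$. (4) Since the two families of central values agree for almost all such $\chi$, the isolated coefficients agree: $a_{F_1}(m)=a_{F_2}(m)$ for almost all $m$, in particular for almost all primes $p$. (5) Apply Theorem~\ref{main thm 1} to conclude $F_1=c\cdot F_2$, and recover $k_1=k_2$ from the shape of the functional equation (or from the leading asymptotics of $a_{F_i}(p)$, which grow like $p^{\,?}$ with exponent determined by the normalization and weight). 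Finally, the remark after the theorem indicates that for $k_i\ge 3$ one can restrict to $3$-squarefree conductors, so in the averaging step one should keep track that the number of prime factors of $q$ used is bounded; this does not affect the orthogonality extraction but must be respected in the sieving.

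The hard part will be step (3)–(4): making the averaging over the thin family of primitive characters of squarefree (or $3$-squarefree) conductor actually isolate a single Dirichlet coefficient cleanly, rather than a weighted sum of many coefficients, and controlling the dual sum and the error terms in the approximate functional equation uniformly as $q$ grows — in particular ensuring that the contribution of the "diagonal'' $m\equiv n\pmod q$ with $m\neq n$, and of the bad primes dividing $q$, is negligible. One must also be careful that "almost all $\chi$'' (a density-one statement within the family) still suffices to pin down each coefficient; this is where one exploits that for each fixed $m$ the set of $q$ for which the orthogonality relation can be run is still of full density. A secondary subtlety is that the root number $\epsilon(\pi_{F_i}\times\chi)$ involves a Gauss sum to the fourth power and possibly the sign of the functional equation of $\pi_{F_i}$ itself; since both $F_i$ have trivial nebentypus and full level, these signs are $\pm1$ and can be compared directly, but one should verify that a sign discrepancy between $F_1$ and $F_2$ would already force a discrepancy in the central values (forcing the signs to agree as well). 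Once these analytic points are settled, the reduction to Theorem~\ref{main thm 1} is immediate.
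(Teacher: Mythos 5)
Your overall strategy---average the central values against $\overline{\chi(p)}$ over primitive characters of squarefree conductor, use an approximate functional equation and orthogonality to extract $a_{F_i}(p)$, and then invoke Theorem~\ref{main thm 1}---is exactly how the paper treats the case where both eigenforms are of general type {\bf (G)} (there it is implemented via a generalization of Radziwi{\l\l}--Yang, with the conductors restricted to a carefully structured set $\mathcal{Q}_{\delta,\nu}$ and with the Siegel--Walfisz property of the coefficients $a_F(n)$ as a key input). But your proposal has a genuine gap: $\mathcal{S}_{k}(\Gamma_2)$ also contains Saito--Kurokawa lifts (type {\bf (P)}), and for these your uniform extraction argument breaks down. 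For a lift $F=F_f$ one has $L(s,\pi_F,\rho_4)=\zeta(s+1/2)\zeta(s-1/2)L(s,\pi_f)$, so $a_F(p)=p^{1/2}+p^{-1/2}+\lambda_f(p)$; these coefficients violate the Ramanujan bound on average and are not Siegel--Walfisz in the sense needed, so the error-term analysis in the averaging (your steps (3)--(4)) cannot be run as stated. Worse, the twisted central value degenerates: for every even primitive $\chi$ one has $L(1/2,\pi_F\times\chi,\rho_4\otimes\sigma_1)=L(0,\chi)L(1,\chi)L(1/2,\pi_f\times\chi)=0$ since $L(0,\chi)=0$, so in the lift case the hypothesis carries much less information than your argument assumes.

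The paper therefore splits into three cases, and two of them need different arguments from yours. If one form is of type {\bf (P)} and the other of type {\bf (G)}, equality of central values is ruled out by a contradiction: the lift's twisted central values vanish for all even $\chi$, while nonvanishing results for cuspidal $\GL_4$ twists (Radziwi{\l\l}--Yang) produce infinitely many even primitive $\chi$ with nonzero central value for the type {\bf (G)} form. If both are lifts, one restricts to odd primitive characters of prime conductor, uses $L(0,\chi)L(1,\chi)\neq 0$ to reduce the hypothesis to $L(1/2,\pi_{f_1}\times\chi)=L(1/2,\pi_{f_2}\times\chi)$, and then runs a Luo--Ramakrishnan-style average (approximate functional equation for the degree-$2$ $L$-functions, Gauss/Kloosterman sum estimates) to get $\lambda_{f_1}(p)=\lambda_{f_2}(p)$, finishing with strong multiplicity one for $\GL_2$. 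Without this case distinction driven by the Arthur classification, your proof only covers the type {\bf (G)} eigenforms, and even there you should be aware that isolating a single coefficient from this thin family is delicate: the main term in the relevant asymptotic beats the error only by a power of $\log Q$, which is why the conductors are taken of the special form $p_1p_2m$ rather than arbitrary squarefree integers.
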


\subsection{The estimate of an average of twisted \texorpdfstring{$L$}{L}-values}

In order to prove Theorem~\ref{main thm 3}, we will need the technical result below (i.e., Theorem~\ref{Sketch of RY}), which is a generalized version of \cite[Theorem~2]{RadziwillYang2023}. To state this result properly, we introduce some notation and definitions first.
\begin{definition}\label{Siegel-Walfisz}
We say that a sequence $\{\alpha(n)\}_{n\geq 1}$ are \textit{Siegel-Walfisz} of level $\kappa>0$ if for every $x\geq 10$, and $(a, q)=1$ with $q\leq (\log x)^\kappa, |t|\leq (\log x)^\kappa$ we have
\begin{equation}
\sum_{\substack{p\leq x\\
p\equiv a\,(\mo q)}}\alpha(p)p^{it}\ll_A\frac{x}{(\log x)^A}
\end{equation}
for any given $A>10$. 
\end{definition}
Note that by the work of \cite{Brumley2006} the sequence $\{a_{F}(n)\}_{n\geq 1}$ are \textit{Siegel-Walfisz} of level $1>\kappa>0$, where $a_{F}(n)$ are defined as in \eqref{finite part of spinor L function}, i.e., the Dirichlet coefficients of the spinor $L$-function.
\begin{definition}\label{defn of Q delta nu}
Let $\kappa, \delta,\nu$ be positive numbers and $Q$ a large number. Set 
\begin{equation}\label{defn of P1 and P2}
P_1=(\log Q)^{\kappa\nu}\quad\text{and}\quad P_2=(\log Q)^{10000}.
\end{equation}
Then we define the set $\mathcal{Q}_{\delta,\nu}\subset [Q/16, 16Q]$ of squarefree integers of the form $p_1p_2m$ such that
\begin{enumerate}[(1)]
    \item 
    $p_1$ is a prime satisfying $p_1\sim P_1;$
    \item 
    $p_2$ is a prime satisfying $p_2\sim P_2;$
    \item 
    $m$ is squarefree satisfying $m\sim Q/(P_1P_2)$. In addition, $m$ has at most $(\delta\log\log Q+ 10)$ distinct prime factors and all of them are larger than $(\log Q)^{20000}$.
\end{enumerate}
Here, the notion $a\sim A$ means that $a\in[A,2A)$. 
\end{definition}
Now we state the required technical result as follows.
\begin{theorem}\label{Sketch of RY}
   Let $\pi_F$ be the cuspidal automorphic representation of $\GSp_4(\A)$ with trivial central character, which is corresponding to a Hecke eigenform $F\in\mathcal{S}_{k}^{\text{{\bf (G)}}}(\Gamma_2)$. For $\re(s)>3/2$, let
$L(s,\pi_{F},\rho_4)=\sum_{n\geq 1}a_{F}(n)n^{-s}$
be the corresponding spinor $L$-function. Let $\mathcal{Q}_{\delta,\nu}$ be the set of integers defined as in Definition~\ref{defn of Q delta nu} for some positive numbers $\delta, \nu$ and a large number $Q$. Let $p$ be a fixed prime. Then  there is a choice of $0<\delta<\nu<1$ with $\kappa\nu<1000$ such that
\begin{equation}
\begin{split}
&\sum_{q\in\mathcal{Q}_{\delta,\nu}}\,\sideset{}{^\star}\sum_{\chi\,(\mo q)}L(1/2,\pi_{F}\times\chi,\rho_4\otimes\sigma_1)\overline{\chi(p)}\\
&\qquad=\frac{a_{F}(p)}{p^{1/2}}Q^2(\log Q)^{-1+\delta-\delta\log\delta+o(1)}+O\big(Q^2(\log Q)^{-1+\delta-\delta\log\delta-2000\delta}\big)
\end{split}
\end{equation}
as $Q\to\infty$. Here, $\sideset{}{^\star}\sum$ means that the summation is over all primitive characters $\chi$. 
\end{theorem}

The proof of Theorem~\ref{Sketch of RY} follows the arguments used in the proof of \cite[Theorem~2]{RadziwillYang2023}, where the first moment of twisted $\GL_4(\A_{\Q})$ $L$-functions is studied. This is essentially due to the fact that for a Hecke eigenform $F\in\mathcal{S}_{k}^{\text{{\bf (G)}}}(\Gamma_2)$, one can associate to $F$ a cuspidal automorphic representation $\pi_F$ of $\GSp_4(\A)$ with trivial central character, and this representation $\pi_F$ can be transferred to a self-dual, symplectic, unitary, cuspidal automorphic representation $\Pi_{4,F}$ of $\GL_4(\A)$.
The new feature in Theorem~\ref{Sketch of RY} is the appearance of the factor $\chi(p)$ in the first moment of the twisted $L$-function $L(1/2,\pi_{F}\times\chi,\rho_4\otimes\sigma_1)$, where $p$ is a fixed prime. Since $p$ is fixed, the main term almost remains unchanged except an extra factor $a_{F}(p)/p^{1/2}$ appears (see Proposition~\ref{prop3}), and the error term is of the same order of magnitude. We therefore only sketch the proof of Theorem~\ref{Sketch of RY}. In particular, we first outline the proofs of the following three key propositions, which are analogous to \cite[Propositions~1-3]{RadziwillYang2023}.
\begin{proposition}\label{prop1}
 Let $\pi_F$ be the cuspidal automorphic representation of $\GSp_4(\A)$ with trivial central character, which is corresponding to a Hecke eigenform $F\in\mathcal{S}_{k}^{\text{{\bf (G)}}}(\Gamma_2)$.
  Let $\delta \geq 0$ be given.
  Let $\mathcal{R} \subset [R/4, 4R]$ and $\mathcal{S} \subset [S/4, 4S]$ be two sets of integers with $R \leq S$. Assume that for every $r \in \mathcal{R}$ and $s \in \mathcal{S}$, we have $(r,s)=1$, both $r$ and $s$ are squarefree, and moreover $\omega(r) \leq 10$ and $\omega(s) \leq \delta \log\log S + 10$.
  Let $\mathcal{Q}$ be the set of all integers that can be written as $r s$ with $r \in \mathcal{R}$ and
  $s \in \mathcal{S}$. Let $V$ be a smooth function compactly supported in $[1/100, 100]$.
  Finally set $Q \colonequals RS$ and assume that $S > R^{10}$. Let $p$ be a fixed prime. 
  Then, for any $M \geq 1$, 
  \begin{align*}
&\sum_{q\in\mathcal{Q}}\,\sideset{}{^\star}\sum_{\chi\,(\mo q)}\varepsilon(\pi_F,\chi)\sum_{m}\frac{a_F(m)\overline{\chi(mp)}}{m^{1/2}}V\left(\frac{m}{M}\right)
\\ &  \ll \| V \|_{\infty, 2} \cdot \Big (\sqrt{R M} Q (\log Q)^{-\frac{1}{2} + \frac{\delta}{2} + \frac{\delta}{2} \log \frac{1}{\delta}} +  Q^2 (\log Q)^{-1 + \delta + \delta \log \frac{1}{\delta}} \cdot R^{-1/4} (\log Q)^{C \delta} \Big ) 
  \end{align*}
  with $C > 10$ an absolute constant. Here, $\varepsilon(\pi_F,\chi)$ is the root number of $L(s,\pi_{F}\times\chi,\rho_4\otimes\sigma_1)$, and
  $$
  \| V \|_{\infty, 2} \colonequals \| V \|_{\infty} + \| V' \|_{\infty} + \| V'' \|_{\infty}.
  $$
\end{proposition}
\begin{proof}
 The desired estimate follows by a similar argument in \cite[\S~2, 3]{RadziwillYang2023}. In fact, the only difference is that we will replace the condition ``$v_1=v_2=\pm 1$" by ``$v_1=v_2=p$" when applying \cite[Lemma~7]{RadziwillYang2023} in the calculation. Notice that $p$ is a fixed prime, and the estimate is still valid.    
\end{proof}
\begin{proposition}\label{prop2}
 Let $\pi_F$ be the cuspidal automorphic representation of $\GSp_4(\A)$ with trivial central character, which is corresponding to a Hecke eigenform $F\in\mathcal{S}_{k}^{\text{{\bf (G)}}}(\Gamma_2)$.
  Let $0 < \kappa < 1$ be such that $a_F$ is Siegel-Walfisz of level $\kappa$.
  Let $0 < \nu, \delta < 1/1000$ be given.
  Let $\mathcal{Q}_{\delta,\nu}$ be the set of squarefree integers defined as in Definition~\ref{defn of Q delta nu}. Let $p$ be a fixed prime.
  Let $V$ be smooth and compactly supported in $[1/100, 100]$. Then, for any $N$ in the range,
  $$
  Q^2 (\log Q)^{-10^9} \leq N \leq Q^2 (\log Q)^{10^9}
  $$
  we have, for any $A > 10$, 
\begin{align*}
 &\sum_{q\in\mathcal{Q}_{\delta,\nu}}\,\sideset{}{^\star}\sum_{\chi\,(\mo q)}\sum_{n}\frac{a_F(n)\chi(n)\overline{\chi(p)}}{n^{1/2}}V\left(\frac{n}{N}\right)\\ 
 & \ll \| V \|_{\infty, 2} \cdot Q \sqrt{N} \left( \frac{e^{C / \delta}}{(\log Q)^{3/2}} \cdot (\log Q)^{4 \nu + 10 \delta + \delta \log \frac{1}{\delta}} + C(A, \nu, \kappa) (\log Q)^{-A} \right) 
\end{align*}
  with $C(A, \nu, \kappa)$ a constant depending only on $A$, $\nu$ and $\kappa$.  
\end{proposition}
\begin{proof}
   Following the proof of \cite[Proposition~2]{RadziwillYang2023}, we separate the inner $n$-sum into two parts: (i) each prime factor of $n$ is not in the range $[\exp(\log ^{\nu}N),N^{1/1000}]$; and (ii) the remaining case. And we point out that the proof of part (i) is an analogue to that of \cite[Proposition~4]{RadziwillYang2023}, and the proof of part (ii) is an analogue to that of \cite[Propositions~5-6]{RadziwillYang2023} once we replace all congruence conditions ``$\cdots\equiv \pm 1\,(\mo d)$'' by ``$\cdots\equiv p\,(\mo d)$ with $(p, d)=1$". Again, notice that $p$ is a fixed prime, and the estimate is still valid.
\end{proof}
\begin{proposition} \label{prop3}
 Let $\pi_F$ be the cuspidal automorphic representation of $\GSp_4(\A)$ with trivial central character, which is corresponding to a Hecke eigenform $F\in\mathcal{S}_{k}^{\text{{\bf (G)}}}(\Gamma_2)$.
  Let $V$ be a smooth function compactly supported in $[N/ 100, 100N]$.
  Let $\mathcal{Q}_{\delta,\nu}$ be the set of squarefree integers defined as in Definition~\ref{defn of Q delta nu}. We assume that $4 P_1 < 2 P_2 < (\log Q)^{20000}$. Let $p$ be a fixed prime. Then
  \begin{align*}
 &\sum_{q\in\mathcal{Q}_{\delta,\nu}}\,\sideset{}{^\star}\sum_{\chi\,(\mo q)}\sum_{n}\frac{a_F(n)\chi(n)\overline{\chi(p)}}{n^{1/2}}V\left(\frac{n}{N}\right)\\ 
 &= V \Big ( \frac{p}{N} \Big ) \frac{a_{F}(p)}{p^{1/2}}Q^2 (\log Q)^{\delta - 1 + \delta \log \frac{1}{\delta} + o(1)} + O\big( Q \sqrt{N} (\log Q)^2 \| V \|_{\infty} \big).
\end{align*}
\end{proposition}
\begin{proof}
This is identical to the proof of \cite[Proposition~3]{RadziwillYang2023}: by the orthogonality relation of primitive characters, we obtain
   \[\sum_{q\in\mathcal{Q}_{\delta,\nu}}\,\sideset{}{^\star}\sum_{\chi\,(\mo q)}\sum_{n}\frac{a_F(n)\chi(n)\overline{\chi(p)}}{n^{1/2}}V\left(\frac{n}{N}\right)=\sum_{cd\in\mathcal{Q}_{\delta,\nu}}\mu(c)\varphi(d)\sum_{n\equiv p(\mo{d})}\frac{a_F(n)}{n^{1/2}}V\left(\frac{n}{N}\right).\]
   We rewrite the congruence condition $n\equiv p(\mo d)$ as $n=p+\ell d$ with $\ell\in\mathbb{Z}_{\geq0}.$ By \cite[Lemmas~9-10]{RadziwillYang2023}, the $\ell=0$ term contributes the main term, while the remaining terms contribute to the error term.
   Notice that the $o(1)$-term in the exponent is determined by the choice of $\mathcal{Q}_{\delta,\nu}$.
\end{proof}
We are now ready to prove Theorem~\ref{Sketch of RY}.
\begin{proof}[\text{Proof of Theorem~\ref{Sketch of RY}}]
Recall that the sequence $\{a_{F_i}(n)\}_{n\geq 1}$ are \textit{Siegel-Walfisz} of level $1>\kappa>0$; see the remark below Definition~\ref{Siegel-Walfisz}. Let 
\begin{equation}\label{NplusMplusunbalanced}
N_{\nu}=Q^2(\log Q)^{1-200\nu}\quad\text{and}\quad M_{\nu}=Q^{2}(\log Q)^{-1+200\nu}.
\end{equation}
Moreover, for some $\epsilon>0$ we define
\begin{equation}\label{NplusMplus}
N_{\nu,\epsilon}^+\colonequals N_{\nu}(\log Q)^{\epsilon}\quad\text{and}\quad M_{\nu,\epsilon}^+\colonequals M_{\nu}(\log Q)^{\epsilon}.
\end{equation}
Note that the analytic conductor of $L(1/2,\pi_F\times\chi,\rho_4\otimes\sigma_1)$ is $\asymp Q^4$ and $N_{\nu}M_{\nu}=Q^4$; see \eqref{NplusMplusunbalanced}. Then we have the (slightly unbalanced) approximate functional equation
\begin{equation}
L(1/2,\pi_F\times\chi,\rho_4\otimes\sigma_1)=\sum_n\frac{a_F(n)\chi(n)}{n^{1/2}}W_1\left(\frac{n}{N_{\nu}}\right)+\varepsilon(\pi_F,\chi)\sum_{m}\frac{a_{F}(m)\overline{\chi(m)}}{m^{1/2}}W_2\left(\frac{m}{M_{\nu}}\right).
\end{equation}
Here, for any given $A>10$, $W_i, i=1,2$, are smooth functions satisfying
\begin{equation}
W_i(x)=\begin{cases}
1+O_A(x^A)&\mbox{for $x<1$},\\
O_A(x^{-A})&\mbox{for $x>1$},
\end{cases}
\end{equation}
and the root number  $\varepsilon(\pi_F,\chi)$ of $L(s,\pi_{F}\times\chi,\rho_4\otimes\sigma_1)$ is defined by
\begin{equation}
\varepsilon(\pi_F,\chi)=c_{\pi_F}\frac{\tau(\chi)^4}{q^2},
\end{equation}
where $|c_{\pi_F}|=1$ and $\tau(\chi)$ is the Gauss sum associated to $\chi$; see \cite{LuoWenzhi2005}.\footnote{If we consider paramodular newforms of level $N$, the term $\chi(N)$ will show up in $\varepsilon(\pi_F,\chi)$. However, the proof will be very similar in this case.}

For our purpose, we also introduce a partition of unity on $n$ and $m$ sums. More precisely, let $V$ be a smooth function compactly supported in $[1/4,4]$ such that for all positive number $x$,
\begin{equation}
1=\sum\limits_NV\left(\frac{x}{N}\right).
\end{equation}
Here, $N$ runs over powers of two. Then by the large sieve and Rankin-Selberg we get
\begin{equation}
    \sum_{q\in\mathcal{Q}_{\delta,\nu}}\,\sideset{}{^\star}\sum_{\chi\,(\mo q)}\sum_n\frac{a_F(n)\chi(n)\overline{\chi(p)}}{n^{1/2}}W_1\left(\frac{n}{N_{\nu}}\right)V\left(\frac{n}{N}\right)\ll_{A, \epsilon}Q^2(\log Q)^{-A}
\end{equation}
for any $A>10, V$ smooth and compactly supported away from $0$ and $N>N_{\nu, \epsilon}^+$. Similarly,
\begin{equation}
    \sum_{q\in\mathcal{Q}_{\delta,\nu}}\,\sideset{}{^\star}\sum_{\chi\,(\mo q)}\varepsilon(\pi_F,\chi)\sum_{m}\frac{a_F(m)\overline{\chi(mp)}}{m^{1/2}}W_2\left(\frac{m}{M_{\nu}}\right)V\left(\frac{m}{M}\right)\ll_{A, \epsilon}Q^2(\log Q)^{-A}
\end{equation}
for any $A>10, V$ smooth and compactly supported away from $0$ and $M>M_{\nu, \epsilon}^+$. It follows that
\begin{equation}\label{EqnfinitepartBound}
\sum_{q\in\mathcal{Q}_{\delta,\nu}}\,\sideset{}{^\star}\sum_{\chi\,(\mo q)}L(1/2,\pi_F\times\chi,\rho_4\otimes\sigma_1)\overline{\chi(p)}=\sum_{N\leq N_{\nu,\epsilon}^+}\mathcal{F}_N+\sum_{M\leq M_{\nu,\epsilon}^+}\mathcal{D}_M+O_{A,\epsilon}\left(Q^2(\log Q)^{-A}\right)
\end{equation}
with $N,M$ running over the powers of two, and where
\begin{equation}\label{FN term}
\mathcal{F}_N\colonequals\sum_{q\in\mathcal{Q}_{\delta,\nu}}\,\sideset{}{^\star}\sum_{\chi\,(\mo q)}\sum_{n}\frac{a_F(n)\chi(n)\overline{\chi(p)}}{n^{1/2}}W_1\left(\frac{n}{N_{\nu}}\right)V\left(\frac{n}{N}\right),
\end{equation}
and
\begin{equation}\label{DM term}
\mathcal{D}_M\colonequals\sum_{q\in\mathcal{Q}_{\delta,\nu}}\,\sideset{}{^\star}\sum_{\chi\,(\mo q)}\varepsilon(\pi_F,\chi)\sum_{m}\frac{a_F(m)\overline{\chi(mp)}}{m^{1/2}}W_2\left(\frac{m}{M_{\nu}}\right)V\left(\frac{m}{M}\right)
.
\end{equation}
Next, we estimate \eqref{EqnfinitepartBound} in each of the ranges by applying Propositions~\ref{prop1}–\ref{prop3}. 
\begin{enumerate}
    \item For $\mathcal{F}_N$ in the range $N\leq Q^2(\log Q)^{-10^9}$, by Proposition~\ref{prop3} we have
\begin{equation}
\sum_{N\leq Q^2(\log Q)^{-10^9}}\mathcal{F}_N=\frac{a_F(p)}{p^{1/2}}Q^2(\log Q)^{-1+\delta-\delta\log\delta+o(1)}+O(Q^2(\log Q)^{-100})
\end{equation}
as $Q\to\infty$.
\item  For $\mathcal{F}_N$ in the range $Q^2(\log Q)^{-10^9}\leq N\leq N_{\nu,\epsilon}^{+}$, this contributes only to the error term. Indeed, we first recall that the sequence $\{a_{F}(n)\}_{n\geq 1}$ are \textit{Siegel-Walfisz} of some level $1>\kappa>0$. There are $\log\log Q$ dy-adic ranges
to consider in this case. According to Proposition~\ref{prop2} their total contribution is
\begin{align*}
& \ll e^{O(1 / \delta)} \cdot \frac{\sqrt{N_{\nu, \varepsilon}^{+}}\,Q}{(\log Q)^{3/2}} (\log Q)^{4 \nu + \delta \log \frac{1}{\delta} + 10 \delta + 2 \varepsilon}  + C(\nu, \kappa) Q^2 (\log Q)^{-10} \\
& \ll e^{O(1 / \delta)} \cdot Q^2(\log Q)^{-1 - 100 \nu + \varepsilon + \delta \log \frac{1}{\delta} + 10 \delta + 2 \varepsilon} + C(\nu, \kappa) Q^2 (\log Q)^{-10}
\end{align*}
with $C(\nu, \kappa)$ a constant depending only on $\nu$ and $\kappa$. Notice that $p$ is a fixed prime, the above is negligible compared to the main term
$\frac{a_F(p)}{p^{1/2}}Q^2 (\log Q)^{-1 + \delta + \delta \log \frac{1}{\delta} + o(1)}$ for any fixed $0< \varepsilon<\delta<\nu<1/1000$.
\item  For $\mathcal{D}_M$ in the ranges $M\leq Q^2(\log Q)^{-10^9}$ and $Q^2(\log Q)^{-10^9}\leq M\leq M_{\nu,\epsilon}^{+}$, both also contribute only to the error term. This follows from Proposition~\ref{prop1} and the same arguments in \cite[\S~1.3 and \S~1.4]{RadziwillYang2023}, respectively. All the estimates are the same due to that $p$ is a fixed prime.
\end{enumerate}
This completes the proof.
\end{proof}

\subsection{Proof of Theorem~\ref{main thm 3}}

With Theorem~\ref{Sketch of RY} in hand, we are now ready to give the proof of Theorem~\ref{main thm 3}.

\begin{proof}[Proof of Theorem~\ref{main thm 3}]
As in the proofs of Theorems~\ref{main thm 1} and \ref{main thm 2}, we will consider three cases: i) both of them are of type {\bf (G)}; ii) one is of type {\bf (G)} while the other is of type {\bf (P)}; and iii) both of them are of type {\bf (P)}. Let $q$ be the conductor of primitive character $\chi$ as in the assumption.

i) This part is based on the pre-print \cite{RadziwillYang2023}. For $i=1, 2$, let $\pi_{F_i}$ be the cuspidal automorphic representation of $\GSp_4(\A)$ of type {\bf (G)} corresponding to $F_i$. Then it corresponds to a cuspidal automorphic representation $\Pi_{4,F_i}$ of $\GL_4(\A)$ such that  (see \eqref{finite part of spinor L function})
\[L(s,\pi_{F_i},\rho_4)=L(s,\Pi_{4,F_i})=\sum_{n=1}^{\infty}\frac{a_{F_i}(n)}{n^s},\quad\text{for } \re(s)>3/2.\] 
Again, recall that the sequence $\{a_{F_i}(n)\}_{n\geq 1}$ are \textit{Siegel-Walfisz} of level $\kappa>0$. Let $\mathcal{Q}_{\delta,\nu}$ be the set of integers defined as in Definition~\ref{defn of Q delta nu} for some positive numbers $\delta, \nu$ and a large number $Q$.
Let $p$ be a fixed prime. Then by Theorem~\ref{Sketch of RY}, there is a choice of $0<\delta<\nu<1$ with $\kappa\nu<1000$ such that
\begin{equation}\label{EqnAsymAveLchip}
\begin{split}
&\sum_{q\in\mathcal{Q}_{\delta,\nu}}\,\sideset{}{^\star}\sum_{\chi\,(\mo q)}L(1/2,\pi_{F_i}\times\chi,\rho_4\otimes\sigma_1)\overline{\chi(p)}\\
&\qquad=\frac{a_{F_i}(p)}{p^{1/2}}Q^2(\log Q)^{-1+\delta-\delta\log\delta+o(1)}+O(Q^2(\log Q)^{-1+\delta-\delta\log\delta-2000\delta})
\end{split}
\end{equation}
as $Q\to \infty$. Indeed, by \cite[Lemma~10]{RadziwillYang2023}, the $o(1)$ term in the exponent of $\log(Q)$ in \eqref{EqnAsymAveLchip} is determined by the set $\mathcal{Q}_{\delta,\nu}$; see also the proof of Proposition~\ref{prop3}. Note that $\pi_{F_1}$ and $\pi_{F_2}$ are cuspidal automorphic representations of $\GSp_4(\A)$ of type {\bf (G)}, and as such can be transferred to cuspidal automorphic representations of $\GL_4(\A_{\Q})$. Consequently, the same set $\mathcal{Q}_{\delta,\nu}$ can be chosen for both $\pi_{F_1}$ and $\pi_{F_2}$.
In this case, by the assumption \eqref{main assumption} we can show that $a_{F_1}(p)=a_{F_2}(p)$. Since $p$ is chosen arbitrarily, by Theorem~\ref{main thm 1} we have $k_1=k_2$ and $F_1=c\cdot F_2$ for some nonzero constant $c$ as desired.

ii) We will again show that this case can never happen. Suppose not. Let $F_1$ be of type {\bf (P)} and $F_2$ be of type {\bf (G)}. Then there exists $f_1\in\mathcal{S}_{2k_1-2}(\SL_2(\Z))$ such that $F_1=F_{f_1}$ and
\begin{equation}
L(s,\pi_{F_1},\rho_4)=\zeta(s-1/2)\zeta(s+1/2)L(s, \pi_{f_1}).
\end{equation}
Let $\chi$ be an arbitrary even primitive character. Then we can see that
\begin{equation}
L(1/2,\pi_{F_1}\times\chi,\rho_4\otimes\sigma_1)=L(0,\chi)L(1,\chi)L(1/2,\pi_{f_1}\times\chi)=0
\end{equation}
by the functional equation of $L(s,\chi)$. However, for $F_2$ of type {\bf (G)}, it will correspond to a self-dual, unitary, cuspidal automorphic representation of $\GL_4(\A)$  with trivial central character, say $\Pi_{4, F_2}$. Then taking $\pi=\Pi_{4, F_2}$ in the pre-print \cite[Theorem~1]{RadziwillYang2023}, there are infinitely many even primitive character $\chi$ such that $L(1/2,\Pi_{4, F_2}\times\chi)\neq 0$, which leads to the desired contradiction. 

iii) In this case, we actually just require that the conductor $q$ of primitive character $\chi$ is a prime number in the assumption. Suppose that $F_i=F_{f_i}, i=1, 2$, with $f_i\in\mathcal{S}_{2k_i-2}(\SL_2(\Z))$. Moreover, let $\pi_{f_i}$ be the cuspidal automorphic representation of $\GL_2(\A)$ associated to $f_i$. Then for any odd  primitive character $\chi$ of conductor $q$, we have $L(0,\chi)L(1,\chi)\neq 0$. Then the assumption \eqref{main assumption} is reduced to
\begin{equation}\label{main assumption in both P types}
L(1/2,\pi_{f_1}\times\chi)=L(1/2,\pi_{f_2}\times\chi),
\end{equation}
due to the relation (see \eqref{SK lift L function in terms of f and zeta} for the non-twisted case)
\begin{equation}
L(1/2,\pi_{F_i}\times\chi,\rho_4\otimes\sigma_1)=L(0,\chi)L(1,\chi)L(1/2,\pi_{f_i}\times\chi).
\end{equation}
Let $p$ be a fixed prime. For our purpose, we also assume that $q$ is larger than $p$. Consider the sum 
\begin{equation}\label{main sum 2}
\mathcal{S}_{f_i}(p)=\sideset{}{^\star}\sum_{\substack{\chi\,(\mo q)\\ \chi(-1)=-1}}L(1/2,\pi_{f_i}\times\chi)\overline{\chi(p)}
\end{equation}
following the work in \cite{LuRa1997}. For $X\geq 1$, we have the following approximate functional equation
\begin{equation}\label{aFE SK case}
L(1/2,\pi_{f_i}\times\chi)=\sum_{n\geq1}\frac{\lambda_{f_i}(n)\chi(n)}{n^{1/2}}U\left(\frac{n}{qX}\right)-\frac{\tau(\chi)^2}{q}\sum_{n\geq1}\frac{\lambda_{f_i}(n)\overline{\chi(n)}}{n^{1/2}}U\left(\frac{nX}{q}\right).
\end{equation}
Here, $\lambda_{f_i}(n)$ are the Hecke eigenvalues associated to $f_i$ and
\begin{equation}
U(y)=\frac{1}{2\pi i}\int_{(2)}\frac{\Gamma(u+k_i-1)}{\Gamma(k_i-1)}G(u)(2\pi y)^{-u}\frac{\,du}{u},
\end{equation}
where $G(u)$ is some even function with exponential decay. By \cite[Proposition~5.4]{IwKo2004}, we can show that
\begin{equation}\label{eq.behavior of the truncation function}
\begin{split}
   U(y)&=1+O(y^{1/2}) \quad \text{as } y\to0, \\
y^jU^{(j)}(y)&\ll_A y^{-A}\quad \text{as } y\to\infty. 
\end{split}
\end{equation}

Let $\bar{p}$ be the multiplicative inverse of $p$ modulo $q$. Inserting the approximate functional equation \eqref{aFE SK case} into \eqref{main sum 2}, we obtain
\begin{equation}
\mathcal{S}_{f_i}(p)=I-II,
\end{equation}
where
\begin{align}
I=&\sum_{\substack{n\geq 1\\(n, q)=1}}\frac{\lambda_{f_i}(n)}{n^{1/2}}U\left(\frac{n}{qX}\right)\sideset{}{^\star}\sum_{\substack{\chi\,(\mo q)\\ \chi(-1)=-1}}\chi(n)\overline{\chi(p)},\label{I in SK lifts}\\
II=&\sum_{\substack{n\geq 1\\(n, q)=1}}\frac{\lambda_{f_i}(n)}{n^{1/2}}U\left(\frac{nX}{q}\right)\frac{1}{q}\sideset{}{^\star}\sum_{\substack{\chi\,(\mo q)\\ \chi(-1)=-1}}\overline{\chi(n)}\overline{\chi(p)}\tau(\chi)^2.\label{II in SK lifts}
\end{align}
Moreover, we will show that the second term $II$ actually contributes to the error term. First, by \cite[(3.8)]{IwKo2004} we have
\begin{equation}\label{IK200438}
 \sideset{}{^\star}\sum_{\chi\,(\mo q)}\chi(m)\chi(n)=\sum_{d\mid (mn-1,q)}\varphi(d)\mu\left(\frac{q}{d}\right),\quad (mn,q)=1.
\end{equation} 
On the other hand, we have
\begin{equation}\label{rewriteoddcharacter}
\sideset{}{^\star}\sum_{\substack{\chi\,(\mo q)\\ \chi(-1)=-1}}\chi(m)\chi(n)=\sideset{}{^\star}\sum_{\chi\,(\mo q)}\frac{\chi(1)-\chi(-1)}{2}\chi(m)\chi(n).
\end{equation}
Then by $\overline{\chi(p)}=\chi(\bar{p})$ and \eqref{IK200438}-\eqref{rewriteoddcharacter}, the double summation \eqref{I in SK lifts} becomes
\begin{equation}\label{eq.term I in SK}
I=\frac{1}{2}\sum_{d\mid q}\varphi(d)\mu\left(\frac{q}{d}\right)\left\lbrace\sum_{\substack{n\geq1\\ n\equiv p\,(\mo d)}}\frac{\lambda_{f_i}(n)}{n^{1/2}}U\left(\frac{n}{qX}\right)-\sum_{\substack{n\geq1\\ n\equiv -p\,(\mo d)}}\frac{\lambda_{f_i}(n)}{n^{1/2}}U\left(\frac{n}{qX}\right)\right\rbrace.
\end{equation}
Notice that $p$ is fixed. We choose $d$ large enough. Then by Deligne's bound $\lambda_{f_i}(n)\ll n^{\epsilon}$ and \eqref{eq.behavior of the truncation function}, we obtain
\begin{align*}
\sum_{\substack{n\geq1\\ n\equiv p\,(\mo d)}}\frac{\lambda_{f_i}(n)}{n^{1/2}}U\left(\frac{n}{qX}\right)&=\frac{\lambda_{f_i}(p)}{p^{1/2}}U\left(\frac{p}{qX}\right)+\sum_{\ell\geq1}\frac{\lambda_{f_i}(p+\ell d)}{(p+\ell d)^{1/2}}U\left(\frac{p+\ell d}{qX}\right)\\
&=\frac{\lambda_{f_i}(p)}{p^{1/2}}U\left(\frac{p}{qX}\right)+O\left(\frac{X^{\epsilon}}{d^{1/2}}\sum_{\ell\ll \frac{(qX)^{1+\epsilon}}{d}}\frac{1}{d^{1/2}}\right)\\
&=\frac{\lambda_{f_i}(p)}{p^{1/2}}U\left(\frac{p}{qX}\right)+O\left(\frac{(qX)^{\frac{1}{2}+\epsilon}}{d}\right),
\end{align*}
and
\[\sum_{\substack{n\geq1\\ n\equiv -p\,(\mo d)}}\frac{\lambda_{f_i}(n)}{n^{1/2}}U\left(\frac{n}{qX}\right)=\sum_{\ell\geq1}\frac{\lambda_{f_i}(-p+\ell d)}{(-p+\ell d)^{1/2}}U\left(\frac{-p+\ell d}{qX}\right)=O\left(\frac{(qX)^{\frac{1}{2}+\epsilon}}{d}\right).\]
Inserting them into \eqref{eq.term I in SK}, we find
\begin{equation}\label{I final}
I=\frac{1}{2}\frac{\lambda_{f_i}(p)}{p^{1/2}}U\left(\frac{p}{qX}\right)\sum_{d\mid q}\varphi(d)\mu\left(\frac{q}{d}\right)+O\left(\tau(q)(qX)^{1/2+\epsilon}\right),
\end{equation}
where $\tau(q)$ is the divisor function of $q$. As for $II$, we can show that the inner summation becomes
\begin{equation}\label{useful est for II}
\begin{split}
\sideset{}{^\star}\sum_{\substack{\chi\,(\mo q)\\ \chi(-1)=-1}}\overline{\chi(n)}\overline{\chi(p)}\tau(\chi)^2&=\frac{1}{2}\sum_{d\mid q}\varphi(d)\mu\left(\frac{q}{d}\right)\left\lbrace S\left(\frac{q^2}{d^2},np;d\right)-S\left(\frac{q^2}{d^2},-np;d\right)\right\rbrace\\
&\ll \tau(q)^2q^{3/2},
\end{split}
\end{equation}
which is similar to \cite[Lemma~2.15]{LuRa1997}. In fact, the bound is due to Weil's bound on the Kloosterman sum. Then plugging \eqref{useful est for II} into \eqref{II in SK lifts} we obtain
\begin{equation}\label{II final}
II=O(q^{1+\epsilon}/X^{1/2+\epsilon}).
\end{equation}
Notice that $p$ is fixed. We choose $X=q^{1/2}$ in \eqref{I final} and \eqref{II final}, and then apply \eqref{eq.behavior of the truncation function}, which implies 
\begin{equation}
\begin{split}
  \mathcal{S}_{f_i}(p)=I-II&=\frac{1}{2}\frac{\lambda_{f_i}(p)}{p^{1/2}}U\left(\frac{p}{qX}\right)\sum_{d\mid q}\varphi(d)\mu\left(\frac{q}{d}\right)+O(q^{3/4+\epsilon})\\
  &=\frac{1}{2}\frac{\lambda_{f_i}(p)}{p^{1/2}}\sum_{d\mid q}\varphi(d)\mu\left(\frac{q}{d}\right)+O(q^{3/4+\epsilon}).  
\end{split}
\end{equation}
Since $q$ is squarefree by definition, it follows from multiplicativity that
\begin{equation}\label{Eqntheinnersumnonzero}
    \sum_{d\mid q}\varphi(d)\mu\left(\frac{q}{d}\right)=\prod_{\substack{v\mid q\\v \text{ prime}}}\left(\sum_{d\mid v}\varphi(d)\mu\left(\frac{v}{d}\right)\right)=\prod_{\substack{v\mid q\\v \text{ prime}}}(v-2).
\end{equation}
In particular, the sum \eqref{Eqntheinnersumnonzero} is nonzero whenever $q$ is odd. Indeed, by the definition of $q\in \mathcal{Q}_{\delta,\nu}$ (see Definition~\ref{defn of Q delta nu}), all prime factors of $q$ are sufficiently large. 

Finally, by \eqref{main assumption in both P types} and let $q\to\infty$, we have $\lambda_{f_1}(p)=\lambda_{f_2}(p)$ for almost all primes $p$. This shows that $f_1$ is a constant multiple of $f_2$ by the strong multiplicity one theorem for $\GL_2$. Therefore, we have $a_{F_1}(p)=a_{F_2}(p)$ for almost all primes $p$ and hence again by Theorem~\ref{main thm 1} we obtain $F_1=c\cdot F_2$ for some nonzero constant $c$ as expected.
\end{proof}

\bibliographystyle{alpha}
\bibliography{SMO.bib}

\begin{thebibliography}{BCGP21}

\bibitem[And74]{Andrianov1974}
A.~N. Andrianov.
\newblock Euler products that correspond to {S}iegel's modular forms of genus
  {$2$}.
\newblock {\em Uspehi Mat. Nauk}, 29(3 (177)):43--110, 1974.

\bibitem[Art04]{Arthur2004}
James Arthur.
\newblock Automorphic representations of {${\rm GSp(4)}$}.
\newblock In {\em Contributions to automorphic forms, geometry, and number
  theory}, pages 65--81. Johns Hopkins Univ. Press, Baltimore, MD, 2004.

\bibitem[AS10]{AvdispahicSmajlovic2010}
Muharem Avdispahi\'c and Lejla Smajlovi\'c.
\newblock On the {S}elberg orthogonality for automorphic {$L$}-functions.
\newblock {\em Arch. Math. (Basel)}, 94(2):147--154, 2010.

\bibitem[BCGP21]{BoxerCalegariGeePilloni2021}
George Boxer, Frank Calegari, Toby Gee, and Vincent Pilloni.
\newblock Abelian surfaces over totally real fields are potentially modular.
\newblock {\em Publ. Math. Inst. Hautes \'{E}tudes Sci.}, 134:153--501, 2021.

\bibitem[BK14]{BrumerKramer2014}
Armand Brumer and Kenneth Kramer.
\newblock Paramodular abelian varieties of odd conductor.
\newblock {\em Trans. Amer. Math. Soc.}, 366(5):2463--2516, 2014.

\bibitem[BK19]{BrumerKramer2019}
Armand Brumer and Kenneth Kramer.
\newblock Corrigendum to ``{P}aramodular abelian varieties of odd conductor".
\newblock {\em Trans. Amer. Math. Soc.}, 372(3):2251--2254, 2019.

\bibitem[Bru06]{Brumley2006}
Farrell Brumley.
\newblock Effective multiplicity one on {${\rm GL}_N$} and narrow zero-free
  regions for {R}ankin-{S}elberg {$L$}-functions.
\newblock {\em Amer. J. Math.}, 128(6):1455--1474, 2006.

\bibitem[Bum97]{Bump1997}
Daniel Bump.
\newblock {\em Automorphic forms and representations}, volume~55 of {\em
  Cambridge Studies in Advanced Mathematics}.
\newblock Cambridge University Press, Cambridge, 1997.

\bibitem[Cas73]{Casselman1973}
William Casselman.
\newblock On some results of {A}tkin and {L}ehner.
\newblock {\em Math. Ann.}, 201:301--314, 1973.

\bibitem[CG13]{CG-irred}
Frank Calegari and Toby Gee.
\newblock Irreducibility of automorphic {G}alois representations of {$GL(n)$},
  {$n$} at most 5.
\newblock {\em Ann. Inst. Fourier (Grenoble)}, 63(5):1881--1912, 2013.

\bibitem[DK00]{Ram2000}
W.~Duke and E.~Kowalski.
\newblock A problem of {L}innik for elliptic curves and mean-value estimates
  for automorphic representations.
\newblock {\em Invent. Math.}, 139(1):1--39, 2000.
\newblock With an appendix by Dinakar Ramakrishnan.

\bibitem[FPRS13]{FarmerPitaleRyanSchmidt2013arxiv}
David~W Farmer, Ameya Pitale, Nathan~C Ryan, and Ralf Schmidt.
\newblock Multiplicity one for {$L$}-functions and applications.
\newblock {\em arXiv preprint arXiv:1305.3972}, 2013.

\bibitem[GH93]{GoldfeldHoffstein1993}
Dorian Goldfeld and Jeffrey Hoffstein.
\newblock On the number of {F}ourier coefficients that determine a modular
  form.
\newblock In {\em A tribute to {E}mil {G}rosswald: number theory and related
  analysis}, volume 143 of {\em Contemp. Math.}, pages 385--393. Amer. Math.
  Soc., Providence, RI, 1993.

\bibitem[Ghi11]{Gh2011}
Alexandru Ghitza.
\newblock Distinguishing {H}ecke eigenforms.
\newblock {\em Int. J. Number Theory}, 7(5):1247--1253, 2011.

\bibitem[HSBT10]{HST2010}
Michael Harris, Nick Shepherd-Barron, and Richard Taylor.
\newblock A family of {C}alabi-{Y}au varieties and potential automorphy.
\newblock {\em Ann. of Math. (2)}, 171(2):779--813, 2010.

\bibitem[IK04]{IwKo2004}
Henryk Iwaniec and Emmanuel Kowalski.
\newblock {\em Analytic number theory}, volume~53 of {\em American Mathematical
  Society Colloquium Publications}.
\newblock American Mathematical Society, Providence, RI, 2004.

\bibitem[JPSS83]{JacquetPiatetski-ShapiroShalika1983}
H.~Jacquet, I.~I. Piatetskii-Shapiro, and J.~A. Shalika.
\newblock Rankin-{S}elberg convolutions.
\newblock {\em Amer. J. Math.}, 105(2):367--464, 1983.

\bibitem[JS81a]{JacquetShalika198102}
H.~Jacquet and J.~A. Shalika.
\newblock On {E}uler products and the classification of automorphic forms.
  {II}.
\newblock {\em Amer. J. Math.}, 103(4):777--815, 1981.

\bibitem[JS81b]{JacquetShalika198101}
H.~Jacquet and J.~A. Shalika.
\newblock On {E}uler products and the classification of automorphic
  representations. {I}.
\newblock {\em Amer. J. Math.}, 103(3):499--558, 1981.

\bibitem[Kim03]{Kim2003}
Henry~H. Kim.
\newblock Functoriality for the exterior square of {${\rm GL}_4$} and the
  symmetric fourth of {${\rm GL}_2$}.
\newblock {\em J. Amer. Math. Soc.}, 16(1):139--183, 2003.
\newblock With appendix 1 by Dinakar Ramakrishnan and appendix 2 by Kim and
  Peter Sarnak.

\bibitem[KMS21]{KumarMeherShankhadhar2021}
Arvind Kumar, Jaban Meher, and Karam~Deo Shankhadhar.
\newblock Strong multiplicity one for {S}iegel cusp forms of degree two.
\newblock {\em Forum Math.}, 33(5):1157--1167, 2021.

\bibitem[Lau05]{Lau2005}
G\'{e}rard Laumon.
\newblock Fonctions z\^{e}tas des vari\'{e}t\'{e}s de {S}iegel de dimension
  trois.
\newblock Number 302, pages 1--66. 2005.
\newblock Formes automorphes. II. Le cas du groupe ${\rm{GSp}}(4)$.

\bibitem[LR97]{LuRa1997}
Wenzhi Luo and Dinakar Ramakrishnan.
\newblock Determination of modular forms by twists of critical {$L$}-values.
\newblock {\em Invent. Math.}, 130(2):371--398, 1997.

\bibitem[LRS99]{LuoRudnickSarnak1999}
Wenzhi Luo, Ze\'{e}v Rudnick, and Peter Sarnak.
\newblock On the generalized {R}amanujan conjecture for {${\rm GL}(n)$}.
\newblock In {\em Automorphic forms, automorphic representations, and
  arithmetic ({F}ort {W}orth, {TX}, 1996)}, volume~66 of {\em Proc. Sympos.
  Pure Math.}, pages 301--310. Amer. Math. Soc., Providence, RI, 1999.

\bibitem[Luo05]{LuoWenzhi2005}
Wenzhi Luo.
\newblock Nonvanishing of {$L$}-functions for {${\rm GL}(n,{\bf A}_{\bf Q})$}.
\newblock {\em Duke Math. J.}, 128(2):199--207, 2005.

\bibitem[LW09]{LiuWang2009}
Jianya Liu and Yonghui Wang.
\newblock A theorem on analytic strong multiplicity one.
\newblock {\em J. Number Theory}, 129(8):1874--1882, 2009.

\bibitem[LWY05]{LiWaYe2005}
Jianya Liu, Yonghui Wang, and Yangbo Ye.
\newblock A proof of {S}elberg's orthogonality for automorphic {$L$}-functions.
\newblock {\em Manuscripta Math.}, 118(2):135--149, 2005.

\bibitem[Mor85]{Moreno1985}
Carlos~J. Moreno.
\newblock Analytic proof of the strong multiplicity one theorem.
\newblock {\em Amer. J. Math.}, 107(1):163--206, 1985.

\bibitem[Mur97]{RamMurty1997}
M.~Ram Murty.
\newblock Congruences between modular forms.
\newblock In {\em Analytic number theory ({K}yoto, 1996)}, volume 247 of {\em
  London Math. Soc. Lecture Note Ser.}, pages 309--320. Cambridge Univ. Press,
  Cambridge, 1997.

\bibitem[PS79]{Piatetski-Shapiro1979}
I.~I. Piatetski-Shapiro.
\newblock Multiplicity one theorems.
\newblock In {\em Automorphic forms, representations and {$L$}-functions
  ({P}roc. {S}ympos. {P}ure {M}ath., {O}regon {S}tate {U}niv., {C}orvallis,
  {O}re., 1977), {P}art 1}, volume XXXIII of {\em Proc. Sympos. Pure Math.},
  pages 209--212. Amer. Math. Soc., Providence, RI, 1979.

\bibitem[PSS14]{PitaleSahaSchmidt2014}
Ameya Pitale, Abhishek Saha, and Ralf Schmidt.
\newblock Transfer of {S}iegel cusp forms of degree 2.
\newblock {\em Mem. Amer. Math. Soc.}, 232(1090):vi+107, 2014.

\bibitem[PY15]{PoorYuen2015}
Cris Poor and David~S. Yuen.
\newblock Paramodular cusp forms.
\newblock {\em Math. Comp.}, 84(293):1401--1438, 2015.

\bibitem[RS96]{RudnickSarnak1996}
Ze\'{e}v Rudnick and Peter Sarnak.
\newblock Zeros of principal {$L$}-functions and random matrix theory.
\newblock {\em Duke Math. J.}, 81(2):269--322, 1996.
\newblock A celebration of John F. Nash, Jr.

\bibitem[RS06]{RobertsSchmidt2006}
Brooks Roberts and Ralf Schmidt.
\newblock On modular forms for the paramodular groups.
\newblock In {\em Automorphic forms and zeta functions}, pages 334--364. World
  Sci. Publ., Hackensack, NJ, 2006.

\bibitem[RS07]{RobertsSchmidt2007}
Brooks Roberts and Ralf Schmidt.
\newblock {\em Local newforms for {GS}p(4)}, volume 1918 of {\em Lecture Notes
  in Mathematics}.
\newblock Springer, Berlin, 2007.

\bibitem[RSY21]{RoySchmidtYi2021}
Manami Roy, Ralf Schmidt, and Shaoyun Yi.
\newblock On counting cuspidal automorphic representations for {$\rm GSp(4)$}.
\newblock {\em Forum Math.}, 33(3):821--843, 2021.

\bibitem[RW17]{RosnerWeissauer2017}
Mirko R\"{o}sner and Rainer Weissauer.
\newblock Multiplicity one for certain paramodular forms of genus two.
\newblock In {\em L-functions and automorphic forms}, volume~10 of {\em
  Contrib. Math. Comput. Sci.}, pages 251--264. Springer, Cham, 2017.

\bibitem[RY23]{RadziwillYang2023}
Maksym Radziwi{\l}{\l} and Liyang Yang.
\newblock {Non-vanishing of twists of $\text{GL}_4(\mathbb{A}_{\bf Q})$
  ${L}$-functions}.
\newblock {\em arXiv preprint arXiv:2304.09171}, 2023.

\bibitem[Sch17]{Schmidt2017}
Ralf Schmidt.
\newblock Archimedean aspects of {S}iegel modular forms of degree 2.
\newblock {\em Rocky Mountain J. Math.}, 47(7):2381--2422, 2017.

\bibitem[Sch18]{Schmidt2018}
Ralf Schmidt.
\newblock Packet structure and paramodular forms.
\newblock {\em Trans. Amer. Math. Soc.}, 370(5):3085--3112, 2018.

\bibitem[Sch20]{Schmidt2020}
Ralf Schmidt.
\newblock Paramodular forms in {CAP} representations of {${\rm GSp}(4)$}.
\newblock {\em Acta Arith.}, 194(4):319--340, 2020.

\bibitem[Sou87]{Soudry1987}
David Soudry.
\newblock A uniqueness theorem for representations of {${\rm GSO}(6)$} and the
  strong multiplicity one theorem for generic representations of {${\rm
  GSp}(4)$}.
\newblock {\em Israel J. Math.}, 58(3):257--287, 1987.

\bibitem[Stu87]{Sturm1987}
Jacob Sturm.
\newblock On the congruence of modular forms.
\newblock In {\em Number theory ({N}ew {Y}ork, 1984--1985)}, volume 1240 of
  {\em Lecture Notes in Math.}, pages 275--280. Springer, Berlin, 1987.

\bibitem[Tay93]{Tay1993}
Richard Taylor.
\newblock On the {$l$}-adic cohomology of {S}iegel threefolds.
\newblock {\em Invent. Math.}, 114(2):289--310, 1993.

\bibitem[VX18]{VilardiXue2018}
Trevor Vilardi and Hui Xue.
\newblock Distinguishing eigenforms of level one.
\newblock {\em Int. J. Number Theory}, 14(1):31--36, 2018.

\bibitem[Wei05]{Wei2005}
Rainer Weissauer.
\newblock Four dimensional {G}alois representations.
\newblock Number 302, pages 67--150. 2005.
\newblock Formes automorphes. II. Le cas du groupe ${\rm{GSp}}(4)$.

\bibitem[WY22]{wei2022distinguishing}
Zhining Wei and Shaoyun Yi.
\newblock On distinguishing siegel cusp forms of degree two.
\newblock {\em To appear in International Journal of Number Theory, arXiv
  preprint arXiv:2207.13234}, 2022.

\end{thebibliography}

\vspace{5ex}
\noindent Department of Mathematics, Ohio State University, Columbus, OH 43210, USA.

\noindent E-mail address: {\tt wangxiyuan928@gmail.com}

\vspace{2ex}
\noindent Department of Mathematics, Brown University, Providence, RI 02912, USA.

\noindent E-mail address: {\tt zhining\_wei@brown.edu}

\vspace{2ex}
\noindent Department of Mathematics, University of Arizona, Tucson, AZ 85721, USA.

\noindent E-mail address: {\tt panyan@arizona.edu}

\vspace{2ex}
\noindent School of Mathematical Sciences, Xiamen University, Xiamen, Fujian 361005, PR China.

\noindent E-mail address: {\tt yishaoyun926@xmu.edu.cn}

\end{document}